\providecommand{\SG@adddot}{.}
   \providecommand{\href}[2]{}%
   \renewcommand{\SG@adddot}{}}{}
\def\tospace#1{\@tospace#1 \tospace@delimiter}
\def\@tospace#1 #2\tospace@delimiter{#1}
\def\MR#1{\edef\MR@help{{http://www.ams.org/mathscinet-getitem?mr=\tospace{#1}}{\tospace{#1}}}%
\expandafter\href\MR@help\SG@adddot}
\providecommand*{\backref}{}
\providecommand*{\backrefalt}{}
\renewcommand*{\backref}[1]{}
\renewcommand*{\backrefalt}[4]{%
    \ifcase #1 %
    \or
      Cited page #2.
    \else
      Cited pages #2.
    \fi
}
\providecommand{\texorpdfstring}[2]{#1}
\newcommand{\R}{\mathbb{R}}
\newcommand{\C}{\mathbb{C}}
\newcommand{\dd}{\;{\rm d}}
\newcommand{\de}{{\rm d}}
\newcommand{\ic}{\mathbf{i}}
\newcommand{\reg}{s}
\newcommand{\N}{\mathbb{N}}
\DeclareMathOperator{\dist}{dist}
\DeclareMathOperator{\Real}{Re}
\newcommand{\Z}{\mathbb{Z}}
\newcommand{\st}{\;|\;}
\newcommand{\Transf}{\hat{T}}
\newcommand{\boL}{\mathcal{L}}
\newcommand{\Lp}{A}
\newcommand{\norm}[1]{\left\| #1 \right\|}
\DeclareMathOperator{\sgn}{sgn}
\newcommand{\D}{\mathcal{D}}
\newcommand{\boN}{\mathcal{N}}
\DeclareMathOperator{\Ima}{Im}
\newcommand{\BB}{\mathcal{B}}
\newcommand{\rad}{\varrho}
\newcommand{\uell}{{\underline{\ell}}}
\newcommand{\coloneqq}{\mathrel{\mathop:}=}
\newtheorem{thm}{Theorem}[section]
\newtheorem{prop}[thm]{Proposition}
\newtheorem{definition}[thm]{Definition}
\newtheorem{lem}[thm]{Lemma}
\newtheorem{cor}[thm]{Corollary}
\newtheorem*{prop*}{Proposition}
\theoremstyle{definition}
\newtheorem{rmk}[thm]{Remark}
\numberwithin{equation}{section}
\begin{document}

\title[Weak convergence in Gibbs-Markov maps]{Characterization of weak convergence of Birkhoff sums for Gibbs-Markov maps}
\author{S\'{e}bastien Gou\"{e}zel}

\address{IRMAR, CNRS UMR 6625,
Universit\'{e} de Rennes 1, 35042 Rennes, France}
\email{sebastien.gouezel@univ-rennes1.fr}

\begin{abstract}
We investigate limit theorems for Birkhoff sums of locally
H\"{o}lder functions under the iteration of Gibbs-Markov maps.
Aaronson and Denker have given sufficient conditions to have
limit theorems in this setting. We show that these conditions
are also necessary: there is no exotic limit theorem for
Gibbs-Markov maps. Our proofs, valid under very weak regularity
assumptions, involve weak perturbation theory and interpolation
spaces. For $L^2$ observables, we also obtain necessary and
sufficient conditions to control the speed of convergence in
the central limit theorem.
\end{abstract}
\date{March 6, 2009}
\maketitle

\section{Introduction and results}
Let $T$ be a probability preserving transformation on a space
$X$, and let $f:X\to \R$. We are interested in this paper in
limit theorems for sequences $(S_n f-A_n)/B_n$, where
$S_nf=\sum_{k=0}^{n-1}f\circ T^k$ and $A_n,B_n$ are real
numbers with $B_n>0$. If $T$ is a Gibbs-Markov map and $f$
satisfies a very weak regularity assumption, we will give
necessary and sufficient conditions for the convergence in
distribution of $(S_n f-A_n)/B_n$ to a nondegenerate random
variable. Sufficient conditions for this convergence are
already known by the work of Aaronson and Denker
\cite{aaronson_denker, aaronson_denker_central} (under stronger
regularity assumptions), and the main point of this article is
to show that these conditions are also necessary. We will also
considerably weaken the regularity assumptions of Aaronson and
Denker, by using weak perturbation theory
\cite{keller_liverani, herve_kl}.

Finding necessary conditions for limit theorems in dynamical
systems has already been considered in \cite{sarig_CPT}, but
here the author considered only random variables in a
controlled class of distributions, while our results apply to
all random variables. The paper \cite{cond_necessary} (see also
\cite{cond_necessary2}) gives in a wider setting (the condition
(B) in this paper is satisfied for Gibbs-Markov maps) a partial
answer to the questions we are considering: if one assumes that
$A_n=0$, then the limiting distribution has to be stable, as in
the case of i.i.d.~random variables. However, it does not
describe for which functions $f$ the convergence $S_n f/B_n \to
W$ takes place, nor does it treat the more difficult case
$A_n\not=0$.

At the heart of our argument lies a very precise control on the
leading eigenvalue of perturbed transfer operators: if the
function $f$ belongs to $L^p$ for $p\in (1,\infty)$, we obtain
such a control up to an error term $O(|t|^{p+\epsilon})$ for
some $\epsilon>0$, in Theorem \ref{thm_highdiff}. This estimate
is useful in a lot of different situations: we illustrate it by
deriving, in Appendix \ref{sec_app}, necessary and sufficient
conditions for the Berry-Esseen theorem (i.e., estimates on the
speed of convergence in the central limit theorem), for $L^2$
observables satisfying the same weak regularity condition as
above.

\subsection{The case of i.i.d.~random variables}
\label{subsecdecritD}

Since our limit theorems will be modeled on corresponding limit
theorems for sums of independent identically distributed random
variables, let us first describe the classical results in this
setting (the statements of this paragraph can be found in
\cite{feller_2} or \cite{ibragimov_linnik}).

\begin{definition}
Let $X_n$ be a sequence of random variable. This sequence
satisfies a \emph{nondegenerate limit theorem} if there exist
$A_n\in \R$ and $B_n>0$ such that $(X_n-A_n)/B_n$ converges in
distribution to a nonconstant random variable.
\end{definition}

\begin{definition}
\label{defSlowlyVar} A measurable function $L:\R_+^* \to \R_+^*$ is
\emph{slowly varying} if, for any $\lambda>0$, $L(\lambda
x)/L(x)\to 1$ when $x\to +\infty$.
\end{definition}

We define three sets of random variables as follows:
\begin{itemize}
\item Let $\D_1$ be the set of nonconstant random variables $Z$ whose
square is integrable.
\item Let $\D_2$ be the set of random variables $Z$ such that
the function $L(x)\coloneqq E( Z^2 1_{|Z|\leq x})$ is
unbounded and slowly varying (equivalently,
$P(|Z|>x)=x^{-2}\ell(x)$ for a function $\ell$ such that
$\tilde L(x)\coloneqq 2\int_1^x \frac{\ell(u)}{u}\dd u$ is
unbounded and slowly varying, and in this case $L$ and
$\tilde L$ are equivalent at $+\infty$).
\item Finally, let $\D_3$ be the set of random variables $Z$ such
that there exist $p\in (0,2)$, a slowly varying function
$L$ and $c_1,c_2\geq 0$ with $c_1+c_2=1$ such that
$P(Z>x)=(c_1+o(1)) L(x) x^{-p}$ and $P(Z<-x)=(c_2+o(1))
L(x)x^{-p}$ when $x\to +\infty$.
\end{itemize}
Let also $\D=\D_1\cup \D_2\cup \D_3$. The set $\D$ is exactly
the set of random variables satisfying nondegenerate limit
theorems, we will now describe the norming constants and the
limiting distribution in these theorems.

Let $Z\in \D$, let $Z_0,Z_1,\dots$ be i.i.d.~random variables
with the same distribution as $Z$. Then
\begin{itemize}
\item If $Z\in \D_1$, let $B_n=\sqrt{n}$
and $W=\boN(0, E(Z^2)-E(Z)^2)$.
\item If $Z\in \D_2$, let $B_n\to \infty$ satisfy $nL(B_n)\sim
B_n^2$ and let $W=\boN(0,1)$.
\item If $Z\in \D_3$, let $B_n\to\infty$ satisfy $nL(B_n) \sim
B_n^p$. Define $c=\Gamma(1-p)
\cos\left(\frac{p\pi}{2}\right)$ if $p\not=1$ and $c=\pi/2$
if $p=1$, and $\beta=c_1-c_2$. Let
$\omega(p,t)=\tan(p\pi/2)$ if $p\not=1$ and
$\omega(1,t)=-\frac{2}{\pi}\log |t|$. Let $W$ be the random
variable with characteristic function
  \begin{equation}
  E(e^{\ic tW})=e^{-c|t|^p(1-\ic\beta \sgn(t)\omega(p,t))}.
  \end{equation}
\end{itemize}

\begin{thm}
\label{thm:converge} In all three cases, there exists $A_n$
such that
  \begin{equation}
  \frac{\sum_{k=0}^{n-1} Z_k -A_n}{B_n} \to W.
  \end{equation}
One can take $A_n=nE(Z)$ if $Z$ is integrable, and $A_n=0$ if
$Z\in \D_3$ with $p<1$ (if $p=1$ but $Z$ is not integrable, the
value of $A_n$ is more complicated to express, see
\cite{aaronson_denker_independant}).

Moreover, the random variables in $\D$ are the only ones to
satisfy such a limit theorem: if a random variable $Z$ is such
that the sequence $\sum_{k=0}^{n-1} Z_k$ satisfies a
nondegenerate limit theorem, then $Z\in \D$.
\end{thm}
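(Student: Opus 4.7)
The plan is to reduce everything to characteristic functions and the L\'evy continuity theorem. Let $\phi(t) = E(e^{\ic t Z})$. Since the $Z_k$ are i.i.d.\ with the same distribution as $Z$, the characteristic function of $(\sum_{k=0}^{n-1} Z_k - A_n)/B_n$ is $e^{-\ic t A_n/B_n} \phi(t/B_n)^n$, so the convergence in distribution to $W$ is equivalent to pointwise convergence of this sequence of characteristic functions to the characteristic function of $W$, provided the limit is continuous at $0$ (which will be automatic in all three cases).

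For the direct implication, I would analyze $\phi$ near $t=0$ separately in each case. If $Z\in\D_1$, the second order Taylor expansion $\phi(t) = 1 + \ic t E(Z) - \tfrac{t^2}{2} E(Z^2) + o(t^2)$ together with $B_n = \sqrt{n}$ and $A_n = nE(Z)$ produces exactly the characteristic function of $\boN(0, E(Z^2)-E(Z)^2)$. If $Z\in\D_2$, integration by parts on $1-\Real \phi(t) = \int (1-\cos(tx)) \dd P_Z(x)$ and the Karamata estimates for slowly varying functions give $1-\Real \phi(t) \sim \tfrac{t^2}{2} L(1/|t|)$; combined with $nL(B_n)\sim B_n^2$ this yields the Gaussian limit. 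If $Z\in\D_3$, the Abelian theorem for regularly varying tails produces $1-\phi(t) \sim c|t|^p L(1/|t|)(1 - \ic \beta \sgn(t) \omega(p,t))$ as $t\to 0$, and the choice of $B_n$ converts this into the prescribed stable characteristic function; the centering $A_n$ is handled by analyzing the imaginary part of $\phi$ separately, which is where the $p=1$ case becomes delicate.

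For the converse implication, the classical strategy has three stages. First, one observes that $(S_{kn} - A_{kn})/B_{kn}$ converges to $W$, but can also be decomposed into $k$ independent copies of $(S_n - A_n)/B_n$ (up to affine rescaling); the convergence of types theorem then forces $W$ to be a stable law of some index $p \in (0,2]$ and forces $B_n$ to be regularly varying of index $1/p$. Second, one reads off $p$ from $W$ and matches the functional equation of the stable characteristic function with $\phi(t/B_n)^n \to \psi(t)$ to get asymptotics of $1-\phi(t)$ near $0$. Third, a Tauberian theorem of Karamata type inverts these Abelian asymptotics to recover the tail behavior of $Z$ and places it in the appropriate $\D_i$.

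The main obstacle is this necessity direction, specifically the Tauberian inversion step: the slow variation of $L$ is concluded from the analysis rather than assumed, and the treatment of the $p=1$ case requires careful handling of the logarithmic correction $\omega(1,t)$ in the centering $A_n$. However, since the entire statement is a purely probabilistic fact about i.i.d.\ sums and both implications together with all the technical regularity arguments are established in \cite{feller_2} and \cite{ibragimov_linnik}, I would not reproduce the proof but simply cite these references; the theorem serves only as the template that the dynamical results of the paper must match.
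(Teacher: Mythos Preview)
Your proposal is correct and matches the paper's treatment: the paper does not prove this theorem at all but simply cites it as a classical result from \cite{feller_2} and \cite{ibragimov_linnik}, exactly as you conclude in your final paragraph. Your sketch of the characteristic-function and Tauberian strategy is accurate and more detailed than anything the paper provides, but ultimately both you and the paper defer to the standard references.
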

The set $\D$ can therefore be described as the set of random
variables belonging to a domain of attraction. The limit laws
in this theorem are the normal law and the so-called stable
laws. The two parts of this theorem are quite different: while
the direct implication is quite elementary (it boils down to a
computation of characteristic functions), the converse
implication, showing that a random variable automatically
belongs to $\D$ if it satisfies a nondegenerate limit theorem,
is much more involved, and requires the full strength of
L\'{e}vy-Khinchine theory.

The direct implication of Theorem \ref{thm:converge} describes
one limit theorem for random variables in $\D$, but does not
exclude the possibility of other limit theorems (for different
centering and scaling sequences). However, the following
convergence of types theorem (see e.g.~\cite[Theorem
14.2]{billingsley:book}) ensures that it can only be the case
in a trivial way:
\begin{thm}
\label{thmConvTypes}
Let $W_n$ be a sequence of random variables converging in
distribution to a nondegenerate random variable $W$. If, for
some $A_n\in \R$ and $B_n>0$, the sequence $(W_n-A_n)/B_n$ also
converges in distribution to a nondegenerate random variable
$W'$, then the sequences $A_n$ and $B_n$ converge respectively
to real numbers $A$ and $B$ (and $W'$ is equal in distribution
to $(W-A)/B$).
\end{thm}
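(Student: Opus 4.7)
This is the classical convergence-of-types theorem. My plan is to proceed by a subsequence/compactness argument on the pair $(A_n, B_n)$, ruling out degenerate limits using tightness of the two convergent sequences of distributions, and then extract uniqueness from the fact that $W'$ is not constant.

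First I would view $(A_n, B_n)$ as a sequence in the compact set $[-\infty,+\infty] \times [0,+\infty]$ and extract subsequences so that $A_{n_k} \to A \in [-\infty, +\infty]$ and $B_{n_k} \to B \in [0,+\infty]$. The goal is to show that necessarily $B \in (0,+\infty)$ and $A \in \R$, and that then $(W_{n_k}-A_{n_k})/B_{n_k}$ converges in distribution to $(W-A)/B$, which must therefore equal $W'$ in distribution.

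The four degenerate cases are ruled out by tightness. If $B = +\infty$, then since $\{W_n\}$ is tight (it converges in distribution), $W_{n_k}/B_{n_k} \to 0$ in probability, so $(W_{n_k}-A_{n_k})/B_{n_k} + A_{n_k}/B_{n_k}$ tends to $0$; the only way the left piece converges in distribution is if $A_{n_k}/B_{n_k}$ converges, and then the limit is a constant, contradicting the nondegeneracy of $W'$. If $B = 0$, then tightness of $(W_n-A_n)/B_n$ forces $W_{n_k} - A_{n_k} \to 0$ in probability, so $A_{n_k} = W_{n_k} - (W_{n_k} - A_{n_k})$ converges in distribution; since $A_{n_k}$ is deterministic, it converges to a real number, and then $W$ is constant, again a contradiction. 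If $B \in (0,\infty)$ but $|A| = \infty$, then $W_{n_k}/B_{n_k}$ is tight while $A_{n_k}/B_{n_k} \to \pm\infty$, so $(W_{n_k}-A_{n_k})/B_{n_k}$ is not tight, contradicting its convergence.

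Thus $B \in (0,\infty)$ and $A \in \R$, and by Slutsky the subsequential limit is $(W-A)/B$, which equals $W'$ in distribution. The main remaining point, and the only mildly subtle one, is to show that $A$ and $B$ do not depend on the subsequence, so that the whole sequences $A_n$ and $B_n$ converge. This reduces to the following uniqueness statement: if $W$ is nondegenerate and $(W-A_1)/B_1$ and $(W-A_2)/B_2$ have the same distribution (with $B_1,B_2>0$), then $A_1=A_2$ and $B_1=B_2$. Writing $\phi$ for the characteristic function of $W$, the equality becomes $\phi(t/B_1) e^{-\ic t A_1/B_1} = \phi(t/B_2) e^{-\ic t A_2/B_2}$; setting $a = B_1/B_2$ and $b = (A_1 - a A_2)/B_1$ and iterating the resulting functional equation $\phi(t) = e^{\ic b t}\phi(at)$ at the points $a^n t$, one sees that if $a \neq 1$ then $|\phi|$ is identically $1$, forcing $W$ to be constant, and then $b=0$ forces $A_1=A_2$. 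This uniqueness combined with the subsequence argument above yields convergence of $(A_n,B_n)$ to the unique pair $(A,B)$ with $W' \stackrel{d}{=} (W-A)/B$.
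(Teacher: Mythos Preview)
Your argument is correct and is essentially the standard proof of the convergence-of-types theorem. The paper itself does not give a proof of this statement: it is quoted as a classical fact with a reference to Billingsley's book, so there is no ``paper's own proof'' to compare against. Your subsequence/compactness argument, the elimination of the degenerate limits via tightness and Slutsky, and the characteristic-function uniqueness step are all sound.

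One small slip: in the uniqueness step, after substituting $s=t/B_1$ you get $\phi(s)=\phi(as)\,e^{\ic s(A_1-aA_2)}$, so the constant should be $b=A_1-aA_2$, not $b=(A_1-aA_2)/B_1$. This does not affect the conclusion, since you only use whether $b=0$, and once $a=1$ this is exactly $A_1=A_2$.
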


The specific form of the convergence, the norming constants or
the limit laws in Theorem \ref{thm:converge} will not be
important to us. Indeed, we will prove in a dynamical setting
that Birkhoff sums satisfy a limit theorem if and only if the
sums of corresponding i.i.d.~random variables also satisfy a
limit theorem. Using Theorem \ref{thm:converge}, this will
readily imply a complete characterization of functions
satisfying a limit theorem -- it will not be necessary to look
into the details of Theorem \ref{thm:converge} and the specific
form of the domains of attraction, contrary to what is done in
\cite{aaronson_denker, aaronson_denker_central}.

\subsection{Limit theorems for Gibbs-Markov maps}

Let $(X,d)$ be a bounded metric space endowed with a
probability measure $m$. A probability preserving map $T:X\to
X$ is \emph{Gibbs-Markov} if there exists a partition $\alpha$
of $X$ (modulo $0$) by sets of positive measure, such that
\begin{enumerate}
\item Markov: for all $a \in \alpha$, $T(a)$ is a union (modulo $0$) of
elements of $\alpha$ and $T:a \to T(a)$ is invertible.
\item Big image and preimage property: there exists a
subset $\{a_1,\dots,a_n\}$ of $\alpha$ with the following
property: for any $a\in \alpha$, there exist
$i,j\in\{1,\dots,n\}$ such that $a \subset T(a_i)$ and $a_j
\subset T(a)$ (modulo $0$).
\item Expansion: there exists $\gamma<1$ such that for all $a\in
\alpha$, for almost all $x,y\in a$, $d(Tx,Ty) \geq
\gamma^{-1} d(x,y)$.
\item Distortion: for $a\in \alpha$, let $g$ be the inverse of the
jacobian of $T$ on $a$, i.e., $g(x)=\frac{ \dd m_{|a}}{\dd
(m\circ T_{|a})} (x)$ for $x\in a$. Then there exists $C$
such that, for all $a\in \alpha$, for almost all $x,y\in
a$,
  $
  \left| 1-\frac{g(x)}{g(y)} \right| \le C d(Tx,Ty)$.
\end{enumerate}
A Gibbs-Markov map is \emph{mixing} if, for all $a,b\in
\alpha$, there exists $N$ such that $b\subset T^n(a) \mod 0$
for any $n>N$. Since the general case reduces to the mixing
one, we will only consider mixing Gibbs-Markov maps.

For $f:X\to \R$ and $A\subset X$, let $Df(A)$ denote the best
Lipschitz constant of $f$ on $A$. If $f$ is integrable, we will
write $\int f$ or $E(f)$ for $\int f\dd m$, the reference
measure being always $\de m$. Our main result follows.
\begin{thm}
\label{thm:main} Let $T:X\to X$ be a mixing probability
preserving Gibbs-Markov map, and let $f:X\to \R$ satisfy
$\sum_{a\in\alpha} m(a)Df(a)^\eta<\infty$ for some $\eta\in
(0,1]$.

Assume $f\in L^2$. Then
\begin{itemize}
\item Either $f$ is the sum of a measurable
coboundary and a constant, i.e., there exist a measurable
function $u$ and a real number $c$ such that $f=u-u\circ
T+c$ almost everywhere. Then $u$ is bounded, and $S_n f-nc$
converges in distribution to the difference $Z-Z'$ where
$Z$ and $Z'$ are independent random variables with the same
distribution as $u$.
\item Otherwise, let $\tilde f=f-\int f\dd m$,
and define $\sigma^2=\int \tilde f^2+2\sum_{k=1}^\infty
\int \tilde f\cdot \tilde f\circ T^k$. Then this series
converges, $\sigma^2>0$, and $(S_n f - n\int f)/\sqrt{n}$
converges in distribution to $\boN(0,\sigma^2)$.
\end{itemize}

Assume that $f$ does not belong to $L^2$. Let $Z_0,Z_1,\dots$
be i.i.d.~random variables with the same distribution as $f$.
Consider sequences $A_n\in \R$ and $B_n>0$, and a nondegenerate
random variable $W$. Then $(S_nf -A_n)/B_n$ converges to $W$ if
and only if $(\sum_{k=0}^{n-1} Z_k -A_n)/B_n$ converges to $W$.
\end{thm}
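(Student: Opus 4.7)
My plan is to derive everything from a fine analysis of the perturbed transfer operators $\Transf_t u = \Transf(e^{\ic t f} u)$. Under the weak regularity assumption $\sum_{a\in\alpha} m(a) Df(a)^\eta < \infty$, the operators $\Transf_t$ need not act continuously on the usual locally Lipschitz space, but weak perturbation theory \cite{keller_liverani, herve_kl} applied to a suitable pair of interpolation spaces still produces, for $|t|$ small, a simple dominant eigenvalue $\lambda(t)$ near $1$ together with a uniform gap to the rest of the spectrum. The standard identity $E(e^{\ic t S_n f}) = \int \Transf_t^n 1 \dd m$ thus yields
\[
E(e^{\ic t S_n f}) = \lambda(t)^n \ell(t) + O(\theta^n), \qquad \ell(t) \to 1 \text{ as } t \to 0,
\]
for some $\theta \in (0,1)$. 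The sharp comparison $\lambda(t) = E(e^{\ic t f}) + O(|t|^{p+\epsilon})$ of Theorem~\ref{thm_highdiff} is the main analytic input I would use.

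\textbf{The $L^2$ case.} When $f \in L^2$, $\lambda$ is twice differentiable at $0$ with $\lambda''(0) = -\sigma^2$, the asymptotic variance in the statement. If $\sigma^2 > 0$, the Gaussian CLT follows directly from the eigenvalue expansion. If $\sigma^2 = 0$, one sets $c = \int f$ and $u = \sum_{k \geq 0} \Transf^k(f - c)$: the spectral gap makes the series converge and the identity $f = u - u \circ T + c$ is straightforward. The point needing work is the boundedness of $u$; I would combine Gibbs-Markov distortion with the big-image property to control $u$ uniformly on cylinders. Then $S_n f - nc = u - u \circ T^n$, and exactness of a mixing Gibbs-Markov map forces $(u, u \circ T^n)$ to converge in distribution to the product measure on bounded observables, so that $S_n f - nc$ converges to $Z - Z'$ with $Z, Z'$ independent copies of $u$.

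\textbf{Sufficiency for $f \notin L^2$.} If $(\sum_{k<n} Z_k - A_n)/B_n \to W$ nondegenerate, Theorem~\ref{thm:converge} forces $\mathrm{Law}(f) \in \D_2 \cup \D_3$, and in particular $f \in L^p$ for some $p \in (1, 2)$ with $nL(B_n) \sim B_n^p$. Theorem~\ref{thm_highdiff} gives $\lambda(t/B_n) = E(e^{\ic t f/B_n}) + O(B_n^{-p-\epsilon})$, and since $n B_n^{-p-\epsilon} \to 0$, raising to the $n$-th power preserves the asymptotic identity: the spectral characteristic function of $(S_n f - A_n)/B_n$ has the same limit $E(e^{\ic t W})$ as the iid one.

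\textbf{Necessity and main obstacle.} Conversely, suppose $(S_n f - A_n)/B_n \to W$ is nondegenerate, so that $E(e^{\ic t(S_n f - A_n)/B_n}) \to E(e^{\ic t W})$. The goal is to run the characteristic function comparison in reverse: combining the spectral formula with the sharp expansion of $\lambda$ would then yield $E(e^{\ic t f/B_n})^n e^{-\ic t A_n/B_n} \to E(e^{\ic t W})$, showing the iid sums satisfy the same nondegenerate limit theorem, and Theorem~\ref{thm:converge} applied to the iid side would conclude $\mathrm{Law}(f) \in \D$. The crux, and the main obstacle, is that Theorem~\ref{thm_highdiff} requires $f \in L^p$ for some $p > 1$, which is \emph{not} assumed. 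I would extract such integrability by applying the convergence-of-types theorem (Theorem~\ref{thmConvTypes}) to truncated Birkhoff sums $\sum_{k<n} f_N \circ T^k$ with $f_N = f \cdot \mathbf{1}_{|f| \le N}$, comparing their limit laws with the limit law of $S_n f$ to constrain the scale $B_n$ and derive a polynomial tail bound $m(|f| > x) = O(x^{-p})$ for some $p > 1$. This bootstrap step is where all the delicacy lies; once it is carried out, the spectral comparison above finishes the argument.
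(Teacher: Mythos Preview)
Your proposal has a genuine gap in the non-$L^2$ case, and it affects both directions.

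The sufficiency argument asserts that if the i.i.d.\ sums converge then ``$f\in L^p$ for some $p\in(1,2)$''. This is false: a function in $\D_3$ with stable index $p\le 1$ (e.g.\ in the domain of attraction of a Cauchy law) is not in $L^q$ for any $q>1$. The same observation kills your necessity bootstrap: you propose to extract $L^p$-integrability, $p>1$, from a nondegenerate limit theorem for the Birkhoff sums, but the conclusion you are trying to extract is simply not true in general, so no amount of truncation and convergence-of-types can produce it. Since Theorem~\ref{thm_highdiff} genuinely needs $p>1$, it cannot be the only analytic input.

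The paper resolves this by splitting into two regimes. When $f\notin L^{1+\eta/2}$ (which includes all stable indices $\le 1$), the strong continuity estimate $\norm{\Transf_t-\Transf}_{\boL\to\boL}=O(|t|^\eta+E|e^{\ic tf}-1|)$, which \emph{does} hold under the weak regularity assumption, already gives
\[
\lambda(t)=E(e^{\ic tf})+O\!\left(|t|^\eta\int|e^{\ic tf}-1|\right)+O\!\left(\int|e^{\ic tf}-1|\right)^2
\]
directly from \eqref{eqCentraleLambdat}; this is Lemma~\ref{lempasL1}. The error terms here are \emph{not} of the form $O(|t|^{p+\epsilon})$ with $p>1$, and need not be uniformly negligible with respect to $1-E(e^{\ic tf})$. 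The paper therefore introduces the notion of an \emph{accurate characteristic expansion} (Definition~\ref{def_car}) allowing the composite error $O(|t|^{q+\epsilon})+o(\int|e^{\ic tf}-1|^2)+O(\int|e^{\ic tf}-1|)^2$ with any $q\le 2$ such that $f\notin L^q$, and then proves the abstract Theorem~\ref{thm_main_general}. The necessity argument there (Lemmas~\ref{lemDefLambda}--\ref{lemEstimed}) does not attempt to bound the error pointwise; instead it shows that $|t|^{q+\epsilon/2}\le\Phi(t)$ along an infinite set of dyadic scales, and uses this subsequence to identify the growth exponent $d$ of $B_n$ and verify $d\le q+\epsilon/2$, which is exactly what makes $n/B_n^{q+\epsilon}\to 0$. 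This subsequence idea is the missing piece in your plan.

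A secondary issue: in the $\sigma^2=0$ case, your formula $u=\sum_{k\ge 0}\Transf^k(f-c)$ solves $u-\Transf u=f-c$, not $u-u\circ T=f-c$; one needs the vanishing of the martingale part to pass from one to the other. More importantly, the series converges only in an interpolation space $\boL^{p,s}$, not in $L^\infty$, so boundedness of $u$ does not follow from ``distortion plus big image'' alone. The paper instead observes that $e^{-\ic tu}$ is an eigenfunction of $\Transf_t$, hence lies in $\boL$ and is continuous, and then deduces boundedness of $u$ from continuity via a Baire-category argument (Lemma~\ref{continuautomatique}) together with the big image/preimage property.
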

In particular, it follows from the classification in the
i.i.d.~case that the Birkhoff sums of a function $f$ satisfy a
nondegenerate limit theorem if and only if the distribution of
$f$ belongs to the class $\D$ described in
Paragraph~\ref{subsecdecritD}.

In the $L^2$ case, the behavior of Birkhoff sums can be quite
different from the i.i.d.~case (see the formula for $\sigma^2$,
encompassing the interactions between different times). On the
other hand, when $f\not\in L^2$, the behavior is exactly the
same as in the i.i.d.~case (the interactions are negligible
with respect to the growth of the sums), and the good scaling
coefficients can be read directly from the independent case
Theorem~\ref{thm:converge}.

The ``sufficiency'' part of the theorem (i.e., the convergence
of the Birkhoff sums if $f$ is in the domain of attraction of a
gaussian or stable law) is known under stronger regularity
assumptions: if the function $f$ is locally H\"{o}lder continuous
(i.e.~$\sup_{a\in\alpha}D f(a)<\infty$), then the result is
proved in \cite{aaronson_denker, aaronson_denker_central} for
$f\not\in L^2$, and it follows from the classical Nagaev method
(see e.g.~\cite{rousseau-egele, guivarch-hardy} for subshifts
of finite type) when $f\in L^2$. The article
\cite{gouezel_stable} proves the same results under the
slightly weaker assumption $\sum m(a)Df(a)<\infty$. However,
these methods are not sufficient to deal with the weaker
assumption $\sum m(a)Df(a)^\eta<\infty$, hence new arguments
will be required to prove the sufficiency part of Theorem
\ref{thm:main}. The main difficulty is the following: even if
$f$ belongs to all $L^p$ spaces and $\sum
m(a)Df(a)^\eta<\infty$, it is possible that $\Transf f$ is not
locally H\"{o}lder continuous, in the sense that there exists
$a\in\alpha$ with $D(\Transf f)(a)=\infty$ (here, $\Transf$
denotes the transfer operator associated to $T$).\footnote{
This is for instance the case if $T$ is the full Markov shift
on infinitely many symbols $a_0,a_1,\dots$ with $m(a_i)=C
e^{-i^2/2}$, and $f$ vanishes on $[a_i]$ but on the set
$\bigcap_{n=0}^{i-1}T^{-n}(a_i)$, where it is equal to
$e^{i^2}$.}

However, the main novelty of the previous theorem is the
necessity part, showing that no exotic limit theorem can hold
for Gibbs-Markov maps, even if one assumes only very weak
regularity of the observable.

\begin{rmk}
The regularity condition $\sum_{a\in\alpha}
m(a)Df(a)^\eta<\infty$ is weaker than the conditions usually
encountered in the literature, but it appears in some natural
examples: for instance, if one tries to prove limit theorems
for the observable $f_0(x)=x^{-a}$ under the iteration of
$x\mapsto 2x \mod 1$, by inducing on $[1/2,1]$, then the
resulting induced observable $f$ satisfies such a condition for
some $\eta=\eta(a)<1$, but not for $\eta=1$.
\end{rmk}

\begin{rmk}
For general transitive Gibbs-Markov maps (without the mixing
assumption), it is still possible to prove that, if the
Birkhoff sums $S_n f$ of a function $f$ (with $\sum
m(a)Df(a)^\eta<\infty$) satisfy a nondegenerate limit theorem,
then the distribution of $f$ belongs to the class $\D$: the
proof we shall give below also applies to merely transitive
maps. However, the converse is not true. More precisely,
functions in $\D$ which are not the sum of a coboundary and a
constant satisfy a limit theorem, just like in the mixing case
(this follows readily from the mixing case), but this is not
the case in general for coboundaries.
\end{rmk}

\subsection{A more general setting}

Our results on Gibbs--Markov maps will be a consequence of a
more general theorem, making it possible to obtain necessary
and sufficient conditions for limit theorems of Birkhoff sums
whenever one can obtain sufficiently precise information on
characteristic functions.

\begin{definition}
\label{def_car} Let $T:X\to X$ be a probability preserving
mixing map, and let $f:X\to \R$ be measurable. The function $f$
admits a \emph{characteristic expansion} if there exist a
neighborhood $I$ of $0$ in $\R$, two measurable functions
$\lambda,\mu:I\to \R$ continuous at $0$ with
$\lambda(0)=\mu(0)=1$, and a sequence $\epsilon_n$ tending to
$0$ such that, for any $t\in I$ and any $n\in \N$,
  \begin{equation}
  \left| E(e^{\ic tS_n f}) -\lambda(t)^n \mu(t) \right|\leq \epsilon_n.
  \end{equation}
This characteristic expansion is \emph{accurate} if one of the
following properties holds:
\begin{itemize}
\item Either there exist $q\leq 2$ and $\epsilon>0$ such that $f\not\in L^q$ and
  \begin{equation}
  \label{eqPasL2}
  \lambda(t)=E(e^{\ic tf})+O(|t|^{q+\epsilon})+O(t^2)+o\left( \int
|e^{\ic tf}-1|^2\right)
  +O\left(\int |e^{\ic tf}-1|\right)^2.
  \end{equation}
\item Or $f\in L^2$ and there exists $c\in \C$ such that
  \begin{equation}
  \label{eqL2}
  \lambda(t)=1+ \ic tE(f)-c t^2/2 + o(t^2).
  \end{equation}
\end{itemize}
\end{definition}
When $f\not\in L^2$, this definition tells that a
characteristic expansion is accurate if $\lambda(t)$ is close
to $E(e^{\ic tf})$, up to error terms described by
\eqref{eqPasL2}. It should be noted that these error terms are
\emph{not} always negligible with respect to $1-E(e^{\ic tf})$,
but they are nevertheless sufficiently small (for sufficiently
many values of $t$) to ensure a good behavior, as shown by the
following theorem.

\begin{thm}
\label{thm_main_general} Let $T:X\to X$ be a probability
preserving mixing map, and let $f:X\to \R$ admit an accurate
characteristic expansion.

Assume that $f\in L^2$. Let $\lambda(t)=1+\ic tE(f)-c
t^2/2+o(t^2)$ be the characteristic expansion of $f$. Then
$\sigma^2\coloneqq c-E(f)^2\geq 0$, and $(S_n
f-nE(f))/\sqrt{n}$ converges in distribution to
$\boN(0,\sigma^2)$.

Assume that $f\not\in L^2$. Let $Z_0,Z_1,\dots$ be
i.i.d.~random variables with the same distribution as $f$.
Consider sequences $A_n\in \R$ and $B_n>0$, and a nondegenerate
random variable $W$. Then $(S_nf -A_n)/B_n$ converges to $W$ if
and only if $(\sum_{k=0}^{n-1} Z_k -A_n)/B_n$ converges to $W$.
\end{thm}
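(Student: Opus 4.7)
The plan is to turn the accurate characteristic expansion into a direct comparison between the characteristic function of $S_n f$ and that of the i.i.d.\ sum $\sum_{k<n} Z_k$. Write $s_n = t/B_n$, $\zeta(s) = E(e^{\ic s f})$, and $R(s) = \lambda(s) - \zeta(s)$. Combining the defining estimate $|E(e^{\ic s_n S_n f})-\lambda(s_n)^n\mu(s_n)|\le\epsilon_n\to 0$ with $\mu(s_n)\to 1$ (continuity of $\mu$), and noting that $|\lambda|\le 1$ on a neighborhood of $0$ (let $n\to\infty$ in the expansion), the elementary inequality $|a^n-b^n|\le n|a-b|$ on the closed unit disc yields
\[
|E(e^{\ic s_n S_n f}) - \zeta(s_n)^n| \le \epsilon_n + |\mu(s_n)-1| + n\,|R(s_n)|.
\]
Once the right-hand side is shown to tend to $0$ for each fixed $t$, L\'evy's continuity theorem implies that the characteristic functions of $(S_n f - A_n)/B_n$ and $(\sum_k Z_k - A_n)/B_n$ differ by $o(1)$, so one converges to $W$ if and only if the other does.

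In the $L^2$ case $B_n = \sqrt{n}$ and the computation is direct: plug $\lambda(t/\sqrt n) = 1+\ic t E(f)/\sqrt n - ct^2/(2n)+o(1/n)$ into $n\log\lambda(t/\sqrt n)$ to get $\lambda(t/\sqrt n)^n = \exp\bigl(\ic\sqrt n\,tE(f)-(c-E(f)^2)t^2/2+o(1)\bigr)$. Together with $\mu(t/\sqrt n)\to 1$ and $\epsilon_n\to 0$, the characteristic function of $(S_nf-nE(f))/\sqrt n$ then converges to $\exp(-\sigma^2 t^2/2)$ with $\sigma^2 := c-E(f)^2$. Being a pointwise limit of characteristic functions of real random variables, the limit must itself be a characteristic function, which forces $\sigma^2$ to be real and nonnegative; L\'evy concludes with convergence to $\boN(0,\sigma^2)$.

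In the case $f\notin L^2$ the core is to show $n|R(s_n)|\to 0$. Writing $I_j(s) = \int|e^{\ic sf}-1|^j\dd m$, I would first prove that every error term in \eqref{eqPasL2} is $o(I_2(s))$ as $s\to 0$: the lower bound $I_2(s)\ge c s^2\int_{|f|\le 1/|s|}f^2\dd m$, whose integrand diverges by $f\notin L^2$, gives $s^2=o(I_2(s))$; a tail-based lower bound $I_2(s)\ge c|s|^p$, where $p\le q$ is a lower tail index of $|f|$ (which $\le q$ because $f\notin L^q$), gives $|s|^{q+\epsilon}=o(I_2(s))$; and the usual split of $|e^{\ic s f}-1|$ on $\{|f|\le 1/|s|\}$ and its complement shows $I_1(s)^2=o(I_2(s))$ for heavy-tailed $f$. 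Hence $R(s)=o(I_2(s))$, and it remains to obtain $nI_2(s_n)=O(1)$.

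For this last estimate I would use convergence in whichever direction is assumed. If $\sum Z_k$ converges, then $|\zeta(s_n)^n|\to|\phi_W(t)|>0$ for $t$ near $0$, which via $-\log z\sim 1-z$ near $1$ gives $n|1-\zeta(s_n)|=O(1)$ and hence $nI_2(s_n)=2n\Real(1-\zeta(s_n))=O(1)$. In the converse direction, the expansion yields $|\lambda(s_n)^n|\to|\phi_W(t)|>0$, so $n|1-\lambda(s_n)|=O(1)$; a bootstrap
\[
nI_2(s_n)\le 2n|1-\lambda(s_n)|+2n|R(s_n)|\le O(1)+o(nI_2(s_n))
\]
forces $nI_2(s_n)=O(1)$, after which $n|R(s_n)|=o(nI_2(s_n))\to 0$. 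The main obstacle is precisely the Fourier-analytic verification that each of the four error terms in \eqref{eqPasL2} is $o(I_2)$ using only the mild integrability conditions $f\notin L^q$ and $f\notin L^2$, and the self-referential bootstrap step in the Birkhoff-to-i.i.d.\ direction.
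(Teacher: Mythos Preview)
Your treatment of the $L^2$ case is correct and essentially identical to the paper's.

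For $f\notin L^2$, however, there is a genuine gap. The assertion that $|s|^{q+\epsilon}=o(I_2(s))$ as $s\to 0$ is \emph{false} in general: the hypothesis $f\notin L^q$ does not produce a uniform lower bound $I_2(s)\ge c|s|^p$ with $p\le q$. The ``lower tail index'' you invoke is not well-defined from non-integrability alone---the tail $P(|f|>x)$ can be concentrated on an extremely sparse sequence of values, and then $\Phi(s)=I_2(s)/2$ can be far smaller than $|s|^{q+\epsilon}$ for many $s$. The paper says this explicitly just before Lemma~\ref{lemDefLambda}: ``for a general function $f$ not belonging to $L^q$, $|t|^{q+\epsilon}$ is not always negligible with respect to $\Phi(t)$.'' Once $R(s)=o(I_2(s))$ fails, your bootstrap inequality $nI_2(s_n)\le O(1)+o(nI_2(s_n))$ collapses, and the whole scheme breaks down.

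The paper's route is more indirect precisely to avoid this trap. It first shows (Lemma~\ref{lemDefLambda}) that $|t|^{q+\epsilon/2}\le\Phi(t)$ holds only along a subset $\Lambda=\bigcup_{n\in A}[2^{-n-1},2^{-n}]$ of small $t$'s. This subset is enough to prove $B_n\to\infty$ (via the mixing assumption and \cite{aaronson_weiss}---a step you also omit, and which is not automatic in the Birkhoff-to-i.i.d.\ direction), then $B_{n+1}/B_n\to 1$, and finally, through regular variation theory, $B_n\sim n^{1/d}L(n)$ with $d\le q+\epsilon/2$ (Lemmas~\ref{lemDonned}--\ref{lemEstimed}). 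The last inequality yields $n/B_n^{q+\epsilon}\to 0$ directly, which is what kills the $O(|s_n|^{q+\epsilon})$ contribution; one never claims this term is $o(I_2)$. The paper also has to treat separately the points $t$ where $E(e^{\ic tW})=0$, since there $n\Phi(t/B_n)\to\infty$ and your boundedness argument for $nI_2(s_n)$ would fail as well.
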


The flavor of this theorem is very similar to
Theorem~\ref{thm:main}. The only difference is in the $L^2$
case, when $\sigma^2=0$: Theorem~\ref{thm_main_general} only
says that $(S_n f -nE(f))/\sqrt{n}$ converges in distribution
to $0$ (note that this is a \emph{degenerate} limit theorem)
while Theorem~\ref{thm:main} gives a more precise conclusion in
this case, showing that $S_n f -nE(f)$ converges in
distribution to a nontrivial random variable. To get this
conclusion, one needs to show that a function $f$ satisfying
$\sigma^2=0$ is a coboundary -- this is indeed the case for
Gibbs-Markov maps, as we will see in
Paragraph~\ref{par_proof_GM_cobord}.

To deduce Theorem~\ref{thm:main} from
Theorem~\ref{thm_main_general}, we should of course check the
assumptions of the latter theorem. The following proposition is
therefore the core of our argument concerning Gibbs-Markov
maps.

\begin{prop}
\label{propGM} Let $T$ be a mixing Gibbs-Markov map, and let
$f:X\to \R$ satisfy $\sum_{a\in\alpha} m(a)Df(a)^\eta<\infty$
for some $\eta>0$. Then $f$ admits an accurate characteristic
expansion.
\end{prop}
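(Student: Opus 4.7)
The plan is to apply the weak perturbation theory of Keller-Liverani, in the form refined by Hervé-Keller-Liverani, to the family of perturbed transfer operators $\Transf_t g \coloneqq \Transf(e^{\ic t f} g)$. The standard identity $E(e^{\ic t S_n f}) = \int \Transf_t^n 1 \dd m$ reduces the proposition to a sufficiently precise spectral decomposition of $\Transf_t$ near $t = 0$: the leading eigenvalue will produce $\lambda(t)$, the spectral projector the multiplicative factor $\mu(t)$, and the complementary part the uniform error $\epsilon_n$ required by Definition \ref{def_car}.

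The technical heart of the setup is the choice of a pair of Banach spaces $(\BB_s, \BB_w)$ — a strong one on which $\Transf$ has a spectral gap and a weak one on which $\Transf_t - \Transf$ becomes small as $t \to 0$ — adapted to the regularity $\sum m(a) Df(a)^\eta < \infty$. A classical locally Hölder space is not preserved by multiplication by $e^{\ic tf}$ under this weak hypothesis (as illustrated by the footnote following Theorem \ref{thm:main}), so one must work in an interpolation space between locally Hölder functions and a Lebesgue space, in the spirit of \cite{keller_liverani, herve_kl}. First I would verify the two Keller-Liverani hypotheses: a uniform Lasota-Yorke inequality $\|\Transf_t^n g\|_s \leq C \rho^n \|g\|_s + C \|g\|_w$ with $\rho < 1$, following from the Gibbs-Markov distortion and expansion once the $\eta$-sum condition is used to bound the contribution of the multiplier $e^{\ic tf}$; and a continuity estimate $\|\Transf_t - \Transf\|_{\BB_s \to \BB_w} \to 0$ as $t \to 0$, which reduces to the control of $\int |e^{\ic tf} - 1|^r \dd m$ for an exponent $r$ dictated by the interpolation parameters. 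The abstract Keller-Liverani theorem then yields a decomposition $\Transf_t = \lambda(t) \Pi_t + N_t$ with $\lambda(0) = 1$ simple, $\Pi_t$ of rank one, and the spectral radius of $N_t$ bounded by some $\theta < 1$; evaluating $\int \Transf_t^n 1 \dd m$ gives the characteristic expansion in the sense of Definition \ref{def_car}.

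The main obstacle, and the reason the classical Nagaev-Guivarc'h argument does not suffice, is upgrading this to an \emph{accurate} expansion. When $f \in L^2$, the expansion \eqref{eqL2} follows by differentiating the eigenvalue equation $\Transf_t h_t = \lambda(t) h_t$ twice at $t = 0$; the $\eta$-sum hypothesis ensures that the formal derivatives $\ic f h_0$ and $-f^2 h_0$ live in $\BB_w$ with the continuity needed for the perturbation expansion. When $f \notin L^2$, I would invoke Theorem \ref{thm_highdiff}, applied with an exponent $q \leq 2$ such that $f \notin L^q$, to get $\lambda(t) = E(e^{\ic tf}) + O(|t|^{q+\epsilon})$; the additional terms $O(t^2)$, $o(\int |e^{\ic tf} - 1|^2)$, and $O((\int |e^{\ic tf} - 1|)^2)$ allowed in \eqref{eqPasL2} are precisely what remains of the second-order fluctuations of the eigenprojector measured in the interpolation norm, and they can be absorbed using the very estimates already needed for the Lasota-Yorke step. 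The proof of Proposition \ref{propGM} thus reduces to checking the interpolation-space hypotheses required by Theorem \ref{thm_highdiff} under the weak regularity $\sum m(a) Df(a)^\eta < \infty$; this is where the bulk of the analytical work lies, and it is the novel ingredient missing from the stronger-regularity treatments of \cite{aaronson_denker, aaronson_denker_central, gouezel_stable}.
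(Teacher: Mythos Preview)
Your high-level plan is right in spirit, but there are two concrete issues.

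First, you misread the footnote after Theorem~\ref{thm:main}. It does \emph{not} say that $\boL$ fails to be preserved by $\Transf_t$; in fact the paper establishes the \emph{strong} continuity estimate $\|\Transf_t - \Transf\|_{\boL \to \boL} = O(|t|^\eta + E|e^{\ic tf}-1|)$ (equation~\eqref{eq:EstimeClose}) and uses classical perturbation theory on $\boL$ itself (Theorem~\ref{thm:kato}) to produce the characteristic expansion. What the footnote illustrates is that $\Transf f$ may fail to lie in $\boL$, i.e., the \emph{derivative} operators $Q_j v = \Transf((\ic f)^j v/j!)$ need not preserve $\boL$. This is why interpolation spaces enter: not to make $\Transf_t$ act, but to host a chain $\BB_0 \supset \cdots \supset \BB_N$ along which the $Q_j$ are bounded, so that the multi-derivative extension of Keller--Liverani (Theorem~\ref{thm_1}, Corollary~\ref{thm:kl2}) can yield the Taylor expansion of $\xi_t$ behind Theorem~\ref{thm_highdiff}.

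Second, and this is a genuine gap: for $f \notin L^2$ you invoke Theorem~\ref{thm_highdiff}, but that theorem \emph{requires} $f \in L^p$ for some $p > 1$. When $f \notin L^{1+\eta/2}$ --- in particular when $f$ lies in no $L^p$ with $p > 1$ --- Theorem~\ref{thm_highdiff} is simply unavailable. The paper covers this low-integrability regime by a separate and much more elementary argument (Lemma~\ref{lempasL1}), based on the exact identity
\[
\lambda(t) = E(e^{\ic tf}) + \int (e^{\ic tf}-1)(\xi_t - 1)
\]
(equation~\eqref{eqCentraleLambdat}) together with the $\boL$-estimate $\|\xi_t - 1\|_\boL = O(|t|^\eta + E|e^{\ic tf}-1|)$ from~\eqref{eqEstimexit}; this directly produces the error terms $O(|t|^{q+\epsilon})$ and $O\bigl(\int|e^{\ic tf}-1|\bigr)^2$ in~\eqref{eqPasL2}. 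Your sketch never isolates this identity, and without it the case $f \notin L^{1+\eta/2}$ is not covered. The split at $L^{1+\eta/2}$ is therefore not a matter of taste but a structural feature of the proof.
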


\begin{rmk}
If we strengthened Definition \ref{def_car}, by requiring for
instance $\lambda(t)=E(e^{\ic tf})+O(t^2)$ when $f\not\in L^2$,
then Theorem~\ref{thm_main_general} would be much easier to
prove. However, we would not be able to prove Proposition
\ref{propGM} with this stronger definition. The form of the
error term in \eqref{eqPasL2} is the result of a tradeoff
between what is sufficient to prove
Theorem~\ref{thm_main_general}, and what we can prove for
Gibbs-Markov maps.
\end{rmk}

The rest of the paper is organized as follows:
Section~\ref{sec_AbstractProof} is devoted to the proof of
Theorem~\ref{thm_main_general}, using general considerations on
characteristic functions, while the results concerning
Gibbs-Markov maps (Proposition~\ref{propGM} and
Theorem~\ref{thm:main}) are proved in
Section~\ref{sec_proof_GM}. The required characteristic
expansion is obtained in some cases using classical
perturbation theory as in \cite{aaronson_denker}, but other
tools are also required in other cases: weak perturbation
theory \cite{keller_liverani, gouezel_liverani, herve_pene} and
interpolation spaces \cite{bergh_lofstrom_interpolation}.
Finally, Appendix \ref{sec_app} describes another application
of our techniques, to the speed in the central limit theorem.

\section{Using accurate characteristic expansions}
\label{sec_AbstractProof}

In this section, we prove Theorem~\ref{thm_main_general}. Let
$f$ be a function satisfying an accurate characteristic
expansion.

Assume first that $f$ is square integrable. Let $t\in \R$. If
$n$ is large enough, $t/\sqrt{n}$ belongs to the domain of
definition of $\lambda$, and
  \begin{align*}
  \lambda\left(\frac{t}{\sqrt{n}}\right)^n
  &=\exp \Bigl( n\log \bigl[1+\ic t E(f)/\sqrt{n} -c t^2/(2n) +
o(1/n)\bigr]\Bigr)
  \\
  &=\exp\Bigl( n \bigl[\ic tE(f)/\sqrt{n}-c t^2/(2n)+t^2
E(f)^2/(2n)+o(1/n)\bigr] \Bigr).
  \end{align*}
Hence, $e^{-\ic t\sqrt{n}E(f)} \lambda(t/\sqrt{n})^n$ converges
to $e^{ - (c -E(f)^2) t^2/2}$. By definition of a
characteristic expansion, this implies that $e^{-\ic
t\sqrt{n}E(f)} E(e^{\ic t S_n f/\sqrt{n}})$ converges to $e^{ -
(c -E(f)^2) t^2/2}$. Therefore, $e^{ - (c -E(f)^2) t^2/2}$ is
the characteristic function of a random variable $W$ and $(S_n
f-n E(f))/\sqrt{n}$ converges in distribution to $W$. This
yields $\sigma^2\coloneqq c-E(f)^2 \geq 0$, and
$W=\boN(0,\sigma^2)$, as desired. The proof of
Theorem~\ref{thm_main_general} is complete in this case.

We now turn to the other more interesting case, where $f\not\in
L^2$. We have apparently two different implications to prove,
but we will prove them at the same time, using the following
proposition.
\begin{prop}
\label{propEquiv} Let $T:X\to X$ and $\tilde T:\tilde X\to
\tilde X$ be two probability preserving mixing maps, and let
$f:X\to \R$ and $\tilde f:\tilde X\to \R$ be two functions with
the same distribution. Assume that both of them admit an
accurate characteristic expansion, and do not belong to $L^2$.
If $(\sum_{k=0}^{n-1}f\circ T^k-A_n)/B_n$ converges in
distribution to a nondegenerate random variable $W$, then
$(\sum_{k=0}^{n-1}\tilde f\circ \tilde T^k-A_n)/B_n$ also
converges to $W$.
\end{prop}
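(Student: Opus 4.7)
The plan is to compare the two characteristic expansions directly, exploiting the fact that $f$ and $\tilde f$ share a distribution, so that the ``main parts'' $E(e^{\ic tf}) = E(e^{\ic t\tilde f})$ cancel when the two expansions are subtracted. Fix $t \in \R$. By the L\'{e}vy continuity theorem, the hypothesis is equivalent to $E(e^{\ic t(S_n f - A_n)/B_n}) \to E(e^{\ic tW})$ for every $t$, and inserting the characteristic expansion of $f$ together with $\epsilon_n \to 0$ rewrites this as
\[
\lambda(t/B_n)^n \mu(t/B_n) e^{-\ic tA_n/B_n} \longrightarrow E(e^{\ic tW}).
\]
The analogous statement for $\tilde f$ is exactly what must be proved. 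A preliminary observation is that $B_n \to \infty$: along any bounded subsequence of $B_n$, $S_n f$ would be tight modulo translation, but the accurate characteristic expansion forces $S_n f$ to behave like an i.i.d.~sum to leading order, which is incompatible with $f \notin L^2$. Granted $B_n \to \infty$, continuity of $\mu$ and $\tilde\mu$ at $0$ gives $\mu(t/B_n), \tilde\mu(t/B_n) \to 1$, so it suffices to show $\lambda(t/B_n)^n - \tilde\lambda(t/B_n)^n \to 0$.

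The natural tool is the telescoping bound $|a^n - b^n| \leq n|a-b|\max(|a|,|b|)^{n-1}$. The factor $\max(|\lambda(t/B_n)|,|\tilde\lambda(t/B_n)|)^{n-1}$ remains bounded: $\lambda(t/B_n)^n$ converges to a quantity of modulus $|E(e^{\ic tW})| > 0$ (for $t$ in a neighborhood of $0$) and $\lambda(t/B_n) \to 1$, giving the bound for $\lambda$; the bound for $\tilde\lambda$ will follow once the difference $|\lambda - \tilde\lambda|$ is controlled. Because $f$ and $\tilde f$ have identical distributions, the quantities $E(e^{\ic tf})$, $\int|e^{\ic tf}-1|^2$ and $\int|e^{\ic tf}-1|$ appearing in the two instances of \eqref{eqPasL2} are identical, and subtracting yields
\[
|\lambda(t) - \tilde\lambda(t)| \leq O(|t|^{q+\epsilon}) + O(t^2) + o(U(t)) + O(V(t)^2),
\]
where $U(t) \coloneqq \int|e^{\ic tf}-1|^2$ and $V(t) \coloneqq \int|e^{\ic tf}-1|$.

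The crux, and the main obstacle, is showing that $n$ times this bound, evaluated at $t/B_n$, tends to $0$. The key input is that convergence of $\lambda(t/B_n)^n$ together with $\lambda(t/B_n) \to 1$ forces $n|1 - \lambda(t/B_n)| = O(1)$. Writing $1 - \lambda = (1 - \phi) - (\lambda - \phi)$ with $\phi(t) = E(e^{\ic tf})$, taking real parts, and using the identity $1 - \Real \phi = U/2$, a short bootstrap based on the accuracy bound applied to $\lambda$ itself yields $nU(t/B_n) = O(1)$, hence also $nV(t/B_n)^2 = O(1)$ since $V^2 \leq U$. The assumption $f \notin L^2$ gives $t^2 = o(U(t))$ as $t \to 0$, so $n(t/B_n)^2 \to 0$; the sharper assumption $f \notin L^q$ built into \eqref{eqPasL2} similarly controls $|t|^{q+\epsilon}$ against $U(t)$ (this is precisely why the error exponent in \eqref{eqPasL2} is calibrated to the integrability class of $f$), whence $n|t/B_n|^{q+\epsilon} \to 0$. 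Finally the $o(U(t/B_n))$ term becomes $o(1)$ after multiplication by $n$. The delicate point is verifying that this bootstrap closes cleanly: the explicit form of the four error terms in \eqref{eqPasL2} has been engineered exactly for this purpose. Since the whole argument is symmetric in $f$ and $\tilde f$, the same reasoning also delivers the converse implication.
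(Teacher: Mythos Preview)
Your telescoping strategy is reasonable and close in spirit to the paper's use of logarithms, but there is a genuine gap at the step where you assert that ``the sharper assumption $f\notin L^q$ \dots\ similarly controls $|t|^{q+\epsilon}$ against $U(t)$''. This is false in general: for $f\notin L^q$ it is \emph{not} always true that $|t|^{q+\epsilon}=o(\Phi(t))$ as $t\to 0$ (the paper flags exactly this as the main difficulty, right after~\eqref{eqPasL2Bis}). Lemma~\ref{lemDefLambda} only secures the bound $|t|^{q+\epsilon/2}\le \Phi(t)$ along a sparse set $\Lambda$ of scales, and the paper then needs the full chain of Lemmas~\ref{lemDecritLambda}--\ref{lemEstimed} (regular variation of $B_n$, identification of an index $d$ with $B_n\sim n^{1/d}L(n)$, and the crucial bound $d\le q+\epsilon/2$) to conclude $n/B_n^{q+\epsilon}\to 0$. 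Your bootstrap yields at best $nU(t/B_n)=O(1)$, and that alone does not force $n|t/B_n|^{q+\epsilon}\to 0$; the argument does not close without this extra input.

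There is a second gap. The claim $n|1-\lambda(t/B_n)|=O(1)$ requires $\lambda(t/B_n)^n$ to converge to a \emph{nonzero} limit; when $E(e^{\ic tW})=0$ one has instead $n\Phi(t/B_n)\to\infty$, and then $n$ times the $o(U)$ error need not vanish. The paper treats this case separately: the argument of $\tilde\lambda(t/B_n)^n$ is uncontrolled, but its modulus still tends to $0$, which suffices since the target is $0$. Two smaller points: $V^2\le U$ gives only $nV^2=O(1)$, not the required $o(1)$---you need the sharper $V^2=o(U)$ of Lemma~\ref{lem:heart}; and your justification that $B_n\to\infty$ is only a sketch, whereas the paper's argument combines a coboundary characterization of Aaronson--Weiss with the set~$\Lambda$ to reach a contradiction.
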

Let us show how this proposition implies
Theorem~\ref{thm_main_general}.
\begin{proof}[Conclusion of the proof of
Theorem~\ref{thm_main_general}, assuming
Proposition~\ref{propEquiv}] Let $f\not\in L^2$ admit an
accurate characteristic expansion.

Let $\tilde T$ be the left shift on the space $\tilde X=\R^\N$,
and let $\tilde f(x_0,x_1,\dots)=x_0$. We endow $\tilde X$ with
the product measure such that $\tilde f, \tilde f\circ \tilde
T,\dots$ are i.i.d.~and distributed as $f$. Then $\tilde f$
admits an accurate characteristic expansion (with $\tilde
\mu(t)=1$ and $\tilde \lambda(t)=E(e^{\ic tf})$).

Proposition~\ref{propEquiv} shows that the convergence of $(S_n
f-A_n)/B_n$ to $W$ gives the convergence of $(\sum_{k=0}^{n-1}
\tilde f\circ \tilde T^k -A_n)/B_n$ to $W$. This is one of the
desired implications in Theorem~\ref{thm_main_general}. The
other implication follows from the same argument, but
exchanging the roles of $T$ and $\tilde T$ in
Proposition~\ref{propEquiv}.
\end{proof}

The rest of this section is devoted to the proof of
Proposition~\ref{propEquiv}. We fix once and for all $T,\tilde
T$ and $f,\tilde f$ as in the assumptions of this proposition,
and assume that $(S_n f-A_n)/B_n$ converges in distribution to
a nondegenerate random variable $W$. Let us also fix $q$ and
$\epsilon$ such that $f\not\in L^q$ and $\lambda(t),\tilde
\lambda(t)$ satisfy \eqref{eqPasL2} (if the values of $q$ and
$\epsilon$ do not coincide for the expansions of $\lambda(t)$
and $\tilde\lambda(t)$, just take the minimum of the two).

Let $\Phi(t)=E(1-\cos(tf))\geq 0$, this function will play an
essential role in the following arguments.
\begin{lem}
\label{lem:heart}
We have $\int |e^{\ic tf}-1|^2=2 \Phi(t)$. Moreover, since $f$
does not belong to $L^2$,
  \begin{equation}
  \label{eq:ljkqmoiu}
  t^2+ \left(\int |e^{\ic tf}-1|\right)^2=o(\Phi(t))\quad \text{when }t\to 0.
  \end{equation}
\end{lem}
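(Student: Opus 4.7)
The identity $\int|e^{\ic tf}-1|^2\,\dd m = 2\Phi(t)$ is immediate from the algebraic identity $|e^{\ic u}-1|^2 = 2(1-\cos u)$, so only the asymptotic claim needs real work. I would split it into the two independent estimates $t^2 = o(\Phi(t))$ and $\bigl(\int|e^{\ic tf}-1|\,\dd m\bigr)^2 = o(\Phi(t))$.

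For the first, the key ingredient is the elementary bound $1-\cos u \geq (2/\pi^2)\,u^2$ valid on $[-\pi,\pi]$ (coming from $|\sin v|\geq (2/\pi)|v|$ on $[-\pi/2,\pi/2]$). Restricting the expectation defining $\Phi(t)$ to $\{|f|\leq \pi/|t|\}$ yields
\begin{equation*}
\Phi(t) \;\geq\; \frac{2 t^2}{\pi^2}\cdot E\bigl(f^2\, 1_{|f|\leq \pi/|t|}\bigr),
\end{equation*}
and since $f\notin L^2$ the truncated second moment tends to $+\infty$ as $t\to 0$ by monotone convergence. Hence $\Phi(t)/t^2\to +\infty$, which is exactly $t^2=o(\Phi(t))$.

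For the second I would truncate at a level $M=M(t)$ to be chosen and write $\int|e^{\ic tf}-1|\,\dd m = \int_{|f|\leq M}|e^{\ic tf}-1|\,\dd m + \int_{|f|>M}|e^{\ic tf}-1|\,\dd m$. The pointwise inequality $|e^{\ic u}-1|\leq |u|$ bounds the first piece by $|t|M$, while Cauchy-Schwarz against the indicator $1_{|f|>M}$ combined with the first identity bounds the second by $\sqrt{2\Phi(t)}\,\sqrt{P(|f|>M)}$. I would then pick $M(t)\to\infty$ slowly enough that $|t|M(t) = o(\sqrt{\Phi(t)})$, which is possible because the previous step gives $\sqrt{\Phi(t)}/|t|\to +\infty$; the first piece is then $o(\sqrt{\Phi(t)})$ by construction and the second because $P(|f|>M(t))\to 0$, and squaring yields the claim. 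The only real obstacle is this balancing of $M(t)$: it must go to infinity fast enough for the tail probability to vanish, yet slowly enough that $|t|M(t)\ll\sqrt{\Phi(t)}$. The hypothesis $f\notin L^2$ is invoked precisely to guarantee the slack $\sqrt{\Phi(t)}/|t|\to +\infty$ that makes such a choice feasible.
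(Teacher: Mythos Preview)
Your proof is correct and follows essentially the same strategy as the paper: establish $t^2=o(\Phi(t))$ first, then split $\int|e^{\ic tf}-1|$ by truncating $|f|$ at a threshold $M$, bounding the small-$|f|$ piece by $|t|M$ and the large-$|f|$ piece by Cauchy--Schwarz against $\|e^{\ic tf}-1\|_{L^2}=\sqrt{2\Phi(t)}$. The only differences are stylistic: the paper obtains $t^2=o(\Phi(t))$ via Fatou's lemma applied to $|(e^{\ic tf}-1)/t|^2\to|f|^2$ rather than your explicit lower bound on $1-\cos$, and it uses a fixed $M$ with a $\limsup$ argument (for each $\epsilon>0$ choose $M$ so that $m\{|f|>M\}<\epsilon$) rather than your $t$-dependent threshold $M(t)$.
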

\begin{proof}
Writing $|e^{\ic tf}-1|^2=(e^{\ic tf}-1)(e^{-\ic tf}-1)$ and
expanding the product, the first assertion of the lemma is
trivial.

To prove that $t^2=o(\int |e^{\ic tf}-1|^2)$, let us show that
  \begin{equation}
  \label{eq:pasdepbtdeux}
  \int \left|\frac{e^{\ic tf}-1}{t}\right|^2 \dd m \to +\infty\quad \text{when
}t\to 0.
  \end{equation}
The integrand converges to $|f|^2$, whose integral is infinite.
Since a sequence $f_n$ of nonnegative functions always
satisfies $\int \liminf f_n \leq \liminf \int f_n$, by Fatou's
Lemma, we get \eqref{eq:pasdepbtdeux}.

Let us now check that $\left(\int |e^{\ic
tf}-1|\right)^2=o(\int |e^{\ic tf}-1|^2)$. Fix a large number
$M$ and partition the space into $A_M=\{|f|\leq M\}$ and
$B_M=\{|f|>M\}$. Since $(a+b)^2\leq 2a^2+2b^2$ for any $a,b\geq
0$, we get
  \begin{align*}
  \left(\int |e^{\ic tf}-1|\right)^2&
  =\left(\int_{A_M} |e^{\ic tf}-1|+\int_{B_M} |e^{\ic tf}-1|\right)^2
  \\&
  \leq 2\left(\int_{A_M} |e^{\ic tf}-1|\right)^2 + 2 \left(\int_{B_M}
|e^{\ic tf}-1|\right)^2
  \\&
  \leq 2M^2t^2 + 2 \norm{1_{B_M}}^2_{L^2} \norm{ e^{\ic tf}-1}^2_{L^2},
  \end{align*}
by Cauchy-Schwarz inequality. The term $2M^2t^2$ is negligible
with respect to $\int |e^{\ic tf}-1|^2$, by
\eqref{eq:pasdepbtdeux}, while the second term is $m(B_M)\int
|e^{\ic tf}-1|^2$. Choosing $M$ large enough, we can ensure
that $m(B_M)$ is arbitrarily small, concluding the proof.
\end{proof}

Lemma \ref{lem:heart} shows that \eqref{eqPasL2} is equivalent
to
  \begin{equation}
  \label{eqPasL2Bis}
  \lambda(t)=E(e^{\ic tf})+O(|t|^{q+\epsilon})+o(\Phi(t)).
  \end{equation}
The main difficulty is that, for a general function $f$ not
belonging to $L^q$, $|t|^{q+\epsilon}$ is not always negligible
with respect to $\Phi(t)$. This is however true along a
subsequence of $t$'s:

\begin{lem}
\label{lemDefLambda}
Since $f$ does not belong to $L^q$, there exists an infinite
set $A\subset \N$ such that, for any $t\in
\Lambda\coloneqq\bigcup_{n\in A} [2^{-n-1},2^{-n}]$,
  \begin{equation}
  \label{eqMinorePhi}
  |t|^{q+\epsilon/2}\leq \Phi(t).
  \end{equation}
\end{lem}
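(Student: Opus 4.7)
The plan is to deduce a divergent weighted integral from $f\not\in L^q$ and then use a dyadic pigeonholing to extract the infinite set $A$. First I would observe that, on a probability space, $f\not\in L^q$ implies $f\not\in L^{q+\epsilon/2}$, so by the classical identity $\int_0^\infty (1-\cos v) v^{-p-1}\,\de v = C_p\in(0,\infty)$ for $p\in(0,2)$, rescaled via $v=tu$ and integrated against the law of $f$, one obtains
\[
\int_0^1 \Phi(t)\, t^{-(q+\epsilon/2)-1}\,\de t \;\geq\; c\, E\bigl(|f|^{q+\epsilon/2}\, 1_{|f|\geq U}\bigr) \;=\; \infty
\]
for some threshold $U$.

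Next I would split this divergent integral into dyadic pieces $\sum_n \int_{2^{-n-1}}^{2^{-n}} \Phi(t) t^{-(q+\epsilon/2)-1}\,\de t = \infty$. The trivial bound $\int_{2^{-n-1}}^{2^{-n}} \Phi(t) t^{-(q+\epsilon/2)-1}\,\de t \leq C \cdot 2^{n(q+\epsilon/2)} \sup_{t\in[2^{-n-1},2^{-n}]} \Phi(t)$ then forces the series $\sum_n 2^{n(q+\epsilon/2)}\sup_{t\in[2^{-n-1},2^{-n}]} \Phi(t)$ to diverge, so it cannot be dominated by any summable sequence, and for every $\delta>0$ the set of $n$'s with $\sup_{t\in[2^{-n-1},2^{-n}]} \Phi(t) \geq 2^{-n(q+\epsilon/2+\delta)}$ is infinite.

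The final and hardest step is to promote this pointwise control at the sup-achieving point to the uniform bound $\Phi(t) \geq t^{q+\epsilon/2}$ on the whole dyadic interval. For this I would appeal to the pseudo-subadditivity $\Phi(s+u) \leq 2\Phi(s) + 2\Phi(u)$, which is immediate from $|\sin((s+u)f/2)| \leq |\sin(sf/2)| + |\sin(uf/2)|$, together with the doubling estimate $\Phi(2s) \leq 4\Phi(s)$ coming from $|\sin(2y)|\leq 2|\sin(y)|$. Rearranged, subadditivity gives $\Phi(s+u) \geq \Phi(s)/2-\Phi(u)$, which allows a large value of $\Phi$ at one point of a dyadic interval to be transported across the whole interval, at the cost of a multiplicative constant that the slack exponent $\delta$ from the previous step can absorb provided $\delta$ is chosen small enough. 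Defining $A$ to consist of those $n$'s where both the $\sup$-bound and this propagation are effective then yields the required infinite set.

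The main obstacle is precisely this third step: $\Phi$ can have rapid oscillations reflecting arithmetic structure in the distribution of $f$, so the transfer from ``$\sup$ is large at one point'' to ``$\Phi(t)\geq t^{q+\epsilon/2}$ uniformly on the dyadic interval'' is the delicate part, and this is exactly where the slack between $q+\epsilon$ (available from \eqref{eqPasL2}) and $q+\epsilon/2$ (appearing in the conclusion) is consumed.
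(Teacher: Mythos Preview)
Your third step does not work. The rearranged subadditivity $\Phi(t)\geq \Phi(s_0)/2-\Phi(s_0-t)$ is useful only if $\Phi(s_0-t)$ can be bounded \emph{from above} by something smaller than $\Phi(s_0)/2$, and you have no such control: $s_0-t$ lies in $(-2^{-n-1},2^{-n-1})$, a region where nothing is known about $\Phi$. The doubling inequality $\Phi(2s)\leq 4\Phi(s)$ does not help either, since it yields only a lower bound on $\Phi$ at a smaller argument in terms of a larger one, never an upper bound. To see that subadditivity plus doubling cannot propagate a $\sup$ to an $\inf$, take $f$ constant equal to $2^{n+1}\pi$: then $\Phi(t)=1-\cos(2^{n+1}\pi t)$ satisfies both properties, yet ranges over all of $[0,2]$ on $[2^{-n-1},2^{-n}]$. (This $f$ is in every $L^q$, so the lemma does not apply to it---the point is precisely that your toolkit in step~3 cannot distinguish it from the $f\notin L^q$ case.) A secondary issue: by basing step~1 on $f\notin L^{q+\epsilon/2}$ rather than $f\notin L^q$ you discard the only slack available, obtaining in step~2 the bound $\sup\Phi\geq 2^{-n(q+\epsilon/2+\delta)}$, which is already \emph{weaker} than the target even at the sup point. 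The genuine slack is between $q$ and $q+\epsilon/2$; the reference to \eqref{eqPasL2} is misplaced, as $\lambda$ plays no role in this lemma.

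The paper avoids all of this with a short contradiction argument. If for every large $n$ there were some $t_n\in[2^{-n-1},2^{-n}]$ with $\Phi(t_n)<|t_n|^{q+\epsilon/2}$, then since $1-\cos y\geq c>0$ for $|y|\in[1/4,1]$ and since $|t_nf(x)|\in[1/4,1]$ whenever $|f(x)|\in[2^{n-1},2^n]$, one obtains $m\{|f|\in[2^{n-1},2^n]\}\leq c^{-1}\Phi(t_n)\leq C\,2^{-n(q+\epsilon/2)}$; summing $2^{nq}$ times this forces $f\in L^q$. The key is the level-set inequality $\Phi(t)\geq c\,m\{|f|\in[2^{n-1},2^n]\}$, which holds uniformly for \emph{every} $t\in[2^{-n-1},2^{-n}]$ and thereby replaces your missing propagation step.
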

\begin{proof}
Assume by contradiction that, for any large enough $n$, there
exists $t_n\in [2^{-n-1},2^{-n}]$ with $\int_X 1-\cos(t_n f)<
|t_n|^{q+\epsilon/2}$. If $x\in X$ is such that $|f(x)|\in
[2^{n-1},2^n]$, then $|t_n f(x)|\in [1/4,1]$. Since $1-\cos(y)$
is bounded from below by $c>0$ on $[-1,-1/4]\cup[1/4,1]$, we
get
  \begin{equation*}
  m\{ |f|\in [2^{n-1},2^n]\} \leq c^{-1} \int 1-\cos(t_n f) \leq C
|t_n|^{q+\epsilon/2} \leq C 2^{-n(q+\epsilon/2)}.
  \end{equation*}
Hence, $\sum 2^{qn}m\{ |f|\in [2^{n-1},2^n]\}$ is finite. This
implies that $f$ belongs to $L^q$, a contradiction.
\end{proof}

\begin{lem}
\label{lemDecritLambda}
Along $\Lambda$, we have $|\lambda(t)|^2=1-(2+o(1)) \Phi(t)$.
\end{lem}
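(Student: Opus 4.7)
The plan is to combine the accurate expansion \eqref{eqPasL2Bis} with the lower bound \eqref{eqMinorePhi} on $\Lambda$, and then expand $|E(e^{\ic tf})|^2$ using Lemma~\ref{lem:heart}. All error terms will be absorbed into $o(\Phi(t))$.

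First I would rewrite \eqref{eqPasL2Bis} on the set $\Lambda$. Along $\Lambda$, $|t|^{q+\epsilon/2} \leq \Phi(t)$, hence
\begin{equation*}
|t|^{q+\epsilon} = |t|^{\epsilon/2}\cdot |t|^{q+\epsilon/2} = o(\Phi(t)) \quad \text{as } t \to 0 \text{ along } \Lambda,
\end{equation*}
so that \eqref{eqPasL2Bis} simplifies to $\lambda(t) = E(e^{\ic tf}) + o(\Phi(t))$ on $\Lambda$.

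Next I would analyze $|E(e^{\ic tf})|^2$. Writing $E(e^{\ic tf}) = 1 - \Phi(t) + \ic \Psi(t)$ with $\Psi(t) = E(\sin(tf))$, one has
\begin{equation*}
|E(e^{\ic tf})|^2 = (1-\Phi(t))^2 + \Psi(t)^2 = 1 - 2\Phi(t) + \Phi(t)^2 + \Psi(t)^2.
\end{equation*}
The point is that $\Psi(t)^2 = o(\Phi(t))$. To see this, use Jensen's inequality followed by the second assertion of Lemma~\ref{lem:heart}:
\begin{equation*}
\Phi(t)^2 + \Psi(t)^2 = |E(e^{\ic tf}) - 1|^2 \leq \left(\int |e^{\ic tf}-1|\right)^2 = o(\Phi(t)),
\end{equation*}
which also shows $\Phi(t)^2 = o(\Phi(t))$ (as can alternatively be seen from $\Phi(t) \to 0$). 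Hence $|E(e^{\ic tf})|^2 = 1 - 2\Phi(t) + o(\Phi(t))$ as $t \to 0$, without restriction to $\Lambda$.

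Finally I would transfer this estimate to $\lambda(t)$ on $\Lambda$. Write $\lambda(t) = E(e^{\ic tf}) + r(t)$ with $r(t) = o(\Phi(t))$ along $\Lambda$. Then
\begin{equation*}
|\lambda(t)|^2 = |E(e^{\ic tf})|^2 + 2\Real\bigl(\overline{E(e^{\ic tf})}\, r(t)\bigr) + |r(t)|^2.
\end{equation*}
Since $|E(e^{\ic tf})| \leq 1$, the cross term is $O(|r(t)|) = o(\Phi(t))$, and $|r(t)|^2 = o(\Phi(t)^2) = o(\Phi(t))$. Combining, $|\lambda(t)|^2 = 1 - 2\Phi(t) + o(\Phi(t))$ along $\Lambda$, which is the claim. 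The only delicate step is the bound $\Psi(t)^2 = o(\Phi(t))$, and I expect this to be the main conceptual content of the proof — it is exactly where the hypothesis $f \notin L^2$ (via Lemma~\ref{lem:heart}) enters; the remainder is straightforward algebraic manipulation.
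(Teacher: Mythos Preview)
Your proof is correct and follows essentially the same approach as the paper: both absorb the $O(|t|^{q+\epsilon})$ term into $o(\Phi(t))$ on $\Lambda$ via Lemma~\ref{lemDefLambda}, then expand $|E(e^{\ic tf})|^2$ and use Lemma~\ref{lem:heart} to control the remainder $|E(e^{\ic tf}-1)|^2\leq\left(\int|e^{\ic tf}-1|\right)^2=o(\Phi(t))$. Your write-up is slightly more explicit in separating real and imaginary parts and in handling the cross terms when passing from $E(e^{\ic tf})$ to $\lambda(t)$, but the argument is the same.
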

\begin{proof}
Along $\Lambda$, the previous lemma and \eqref{eqPasL2Bis} give
$\lambda(t)=E(e^{\ic tf})+o(\Phi(t))$. Hence,
  \begin{align*}
  |\lambda(t)|^2&=|E(e^{\ic tf})|^2+o(\Phi(t))
  \\&
  =(1 -E (1-e^{\ic tf}))\cdot (1-E(1-e^{-\ic tf})) + o(\Phi(t))
  \\&
  =1-2 E(1-\cos(tf)) + |E(1-e^{\ic tf})|^2+o(\Phi(t)).
  \end{align*}
Moreover, $E(1-\cos(tf))=\Phi(t)$, and $|E(1-e^{\ic tf})|^2
\leq \left( \int |1-e^{\ic tf}|\right)^2$, which is negligible
with respect to $\Phi(t)$ by Lemma~\ref{lem:heart}. This proves
the lemma.
\end{proof}

\begin{lem}
The sequence $B_n$ tends to infinity.
\end{lem}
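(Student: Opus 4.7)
The plan is to argue by contradiction. Assume $B_n$ does not tend to infinity; then some subsequence $(n_k)$ has $B_{n_k}$ bounded, and after a further extraction we may assume $B_{n_k}\to B\in[0,\infty)$. Since $(S_{n_k}f-A_{n_k})/B_{n_k}\to W$ in distribution and $B_{n_k}\to B$, Slutsky's theorem yields $S_{n_k}f-A_{n_k}\to BW$ in distribution, so for every $t\in\R$,
\begin{equation*}
|E(e^{\ic tS_{n_k}f})| = |e^{-\ic tA_{n_k}} E(e^{\ic tS_{n_k}f})| \longrightarrow |\phi_{BW}(t)|,
\end{equation*}
where $\phi_{BW}$ denotes the characteristic function of $BW$.

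The key step is to establish that $|\lambda(t)|<1$ for every sufficiently small $t\in\Lambda$. Combining $|\lambda(t)|^2 = 1-(2+o(1))\Phi(t)$ from Lemma \ref{lemDecritLambda} with the lower bound $\Phi(t)\geq|t|^{q+\epsilon/2}>0$ on $\Lambda$ from Lemma \ref{lemDefLambda}, one obtains $|\lambda(t)|^2 \leq 1-\Phi(t)<1$ for $t\in\Lambda$ small enough. The accurate characteristic expansion then gives
\begin{equation*}
|E(e^{\ic tS_{n_k}f})| \leq |\lambda(t)|^{n_k}|\mu(t)|+\epsilon_{n_k} \longrightarrow 0,
\end{equation*}
using that $\mu$ is continuous at $0$ (hence bounded on a neighborhood of $0$) and $\epsilon_{n_k}\to 0$.

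Comparing the two displays, $\phi_{BW}(t)=0$ for all sufficiently small $t\in\Lambda$. If $B=0$, then $BW\equiv 0$ and $\phi_{BW}\equiv 1$, an immediate contradiction. If $B>0$, the fact that $A$ is infinite supplies a sequence $t_m\to 0$ in $\Lambda$ (for instance $t_m=2^{-n}$ with $n\in A$ large), along which $\phi_{BW}(t_m)=0$; continuity of $\phi_{BW}$ at $0$ then forces $\phi_{BW}(t_m)\to\phi_{BW}(0)=1$, a contradiction. The main analytic work has already been carried out in the two preceding lemmas; the only potential obstacle is verifying the strict inequality $|\lambda(t)|<1$ along a sequence $t\to 0$, and once this is in hand the proof reduces to a soft argument combining Slutsky's theorem with continuity of characteristic functions.
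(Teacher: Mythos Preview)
Your proof is correct, and in fact it is more elementary than the paper's argument. The paper, after observing that a subsequence $S_{j(n)}f - A_{j(n)}$ is tight, invokes a nontrivial external result of Aaronson and Weiss (valid for mixing systems) to conclude that $f$ is a measurable coboundary plus a constant, $f = u - u\circ T + c$; this yields convergence of the \emph{full} sequence $S_n f - nc$ to a limit $Z$, and the rest of the argument proceeds as yours does, showing $E(e^{\ic tZ})=0$ along $\Lambda$ near $0$ and deriving a contradiction with continuity of characteristic functions. You bypass the coboundary step entirely: Slutsky's theorem applied along the subsequence $B_{n_k}\to B$ already gives a limiting characteristic function $\phi_{BW}$, and the same contradiction follows. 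What the paper's route buys is the structural information that boundedness of $B_n$ along a subsequence would force $f$ to be a coboundary, but this is not needed for the lemma itself; your approach is shorter, avoids the appeal to \cite{aaronson_weiss}, and would work without the mixing hypothesis on $T$.
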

\begin{proof}
Assume by contradiction that $B_n$ does not tend to infinity.
In this case, there exists a subsequence $j(n)$ such that the
distribution of $S_{j(n)} f-A_{j(n)}$ is tight. Since $T$ is
mixing, \cite[Theorem 2]{aaronson_weiss} implies the existence
of $c\in \R$ and of a measurable function $u:X\to \R$ such that
$f=u-u\circ T+c$ almost everywhere. In particular, $S_n f -nc$
converges in distribution, to $Z\coloneqq Z_1-Z_2$ where $Z_1$
and $Z_2$ are i.i.d.~and distributed as $u$. Hence, $e^{-\ic
tnc} E(e^{\ic tS_n f})$ converges to $E(e^{\ic tZ})$, and
therefore $|E(e^{\ic tS_n f})|\to |E(e^{\ic tZ})|$. However,
$E(e^{\ic tS_n f})=\mu(t)\lambda(t)^n+o(1)$. If
$|\lambda(t)|<1$, we obtain $E(e^{\ic tZ})=0$.

Along $\Lambda$, the function $\Phi$ is positive (by
\eqref{eqMinorePhi}) and $|\lambda(t)|^2=1-(2+o(1)) \Phi(t)$ by
the previous lemma. Hence, if $t$ is small enough and belongs
to $\Lambda$, we have $|\lambda(t)|<1$, and $E(e^{\ic tZ})=0$.
In particular, the function $t\mapsto E(e^{\ic tZ})$ is not
continuous at $0$, which is a contradiction since a
characteristic function is always continuous.
\end{proof}

\begin{lem}
The sequence $B_{n+1}/B_n$ converges to $1$.
\end{lem}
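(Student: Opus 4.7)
The plan is to combine the previous lemma ($B_n\to\infty$) with the convergence of types theorem (Theorem~\ref{thmConvTypes}), applied simultaneously at times $n$ and $n+1$ with the same scaling $B_n$. The key preliminary observation is that $S_{n+1}f = S_n f + f\circ T^n$, where $f\circ T^n$ has the same distribution as $f$ because $T$ preserves $m$, so the family $\{f\circ T^n\}_n$ is tight; combined with $B_n\to\infty$ from the previous lemma, this gives $f\circ T^n/B_n \to 0$ in probability. Slutsky's lemma then yields
\[
W_n \coloneqq \frac{S_{n+1}f - A_n}{B_n} \to W \quad \text{in distribution.}
\]

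Next I would apply the hypothesis at index $n+1$ to get $(S_{n+1}f - A_{n+1})/B_{n+1}\to W$, and algebraically rewrite the left-hand side as
\[
\frac{S_{n+1}f - A_{n+1}}{B_{n+1}} = \frac{W_n - \alpha_n}{\beta_n}, \qquad \alpha_n \coloneqq \frac{A_{n+1}-A_n}{B_n}, \qquad \beta_n \coloneqq \frac{B_{n+1}}{B_n}.
\]
Since both $W_n$ and $(W_n-\alpha_n)/\beta_n$ converge to the same nondegenerate limit $W$, Theorem~\ref{thmConvTypes} applies and yields $\beta_n \to B>0$ and $\alpha_n \to A\in\R$, with $W$ equal in distribution to $(W-A)/B$.

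To finish, I would rule out $B\ne 1$. The self-similarity $W\stackrel{d}{=}(W-A)/B$ reads on the level of characteristic functions as $\phi_W(t) = e^{\ic tA/B}\phi_W(t/B)$, whence $|\phi_W(t)| = |\phi_W(t/B^k)|$ for every $k\in\Z$. If $B\ne 1$, letting $k$ tend to $\pm\infty$ and using continuity of $\phi_W$ at $0$ gives $|\phi_W(t)|\equiv 1$, which forces $W$ to be constant, contradicting nondegeneracy. Hence $B=1$, which is exactly $B_{n+1}/B_n\to 1$. The only potentially delicate step is the Slutsky reduction $f\circ T^n = o_P(B_n)$, which is precisely the use to which $B_n\to\infty$ from the previous lemma is put; beyond that, the argument is a direct invocation of the convergence of types theorem.
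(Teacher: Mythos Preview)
Your proof is correct and follows essentially the same approach as the paper: use invariance of the measure and $B_n\to\infty$ to show $(S_{n+1}f-A_n)/B_n\to W$, then apply the convergence of types theorem. The only differences are that the paper uses the decomposition $S_{n+1}f=f+S_nf\circ T$ (noting $S_nf\circ T\stackrel{d}{=}S_nf$) rather than your $S_{n+1}f=S_nf+f\circ T^n$, and that the paper leaves the step $B=1$ implicit while you spell out the characteristic-function argument excluding $B\neq 1$.
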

\begin{proof}
We know that $(S_n f-A_n)/B_n$ converges in distribution to a
nondegenerate random variable $W$. Since the measure is
invariant, $(S_n f\circ T -A_n)/B_n$ also converges to $W$.
Since $B_n\to\infty$, this implies that $(S_{n+1}f-A_n)/B_n$
converges to $W$. However, $(S_{n+1}f-A_{n+1})/B_{n+1}$
converges to $W$. The convergence of types theorem
(Theorem~\ref{thmConvTypes}) therefore yields $B_{n+1}/B_n\to
1$.
\end{proof}

Slowly varying functions have been defined in
Definition~\ref{defSlowlyVar}.

\begin{lem}
\label{lemDonned}
There exist $d\in (0,2]$ and a slowly varying function $L$ such
that $B_n\sim n^{1/d} L(n)$.
\end{lem}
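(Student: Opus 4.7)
The plan is to apply the convergence of types theorem (Theorem~\ref{thmConvTypes}) to longer Birkhoff sums $S_{kn}f$ to show that $B_{kn}/B_n$ converges to a multiplicative function $\psi(k)$, and then combine this with the already-established $B_{n+1}/B_n\to 1$ and Karamata's characterization of regularly varying sequences to extract the index $1/d$. Fix a positive integer $k$. The accurate characteristic expansion combined with $\mu(t/B_n)\to 1$ (since $B_n\to\infty$) transforms the hypothesis $(S_nf-A_n)/B_n\to W$ into $e^{-\ic tA_n/B_n}\lambda(t/B_n)^n\to E(e^{\ic tW})$; raising to the $k$-th power,
$$E\bigl(e^{\ic t(S_{kn}f-kA_n)/B_n}\bigr)=\mu(t/B_n)\bigl(e^{-\ic tA_n/B_n}\lambda(t/B_n)^n\bigr)^k+o(1)\to E(e^{\ic tW})^k,$$
so $(S_{kn}f-kA_n)/B_n$ converges in distribution to the $k$-fold convolution $W^{*k}$, which is nondegenerate. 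Writing $(S_{kn}f-A_{kn})/B_{kn}$ as an affine transformation of $(S_{kn}f-kA_n)/B_n$ and invoking Theorem~\ref{thmConvTypes} yields $B_{kn}/B_n\to\psi(k)\in(0,\infty)$, and the two-step identity $B_{k_1k_2 n}/B_n=(B_{k_1(k_2 n)}/B_{k_2 n})\cdot(B_{k_2 n}/B_n)$ gives the multiplicative relation $\psi(k_1k_2)=\psi(k_1)\psi(k_2)$.

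I then verify $\psi(2)>1$: the value $\psi(2)<1$ is ruled out by $B_n\to\infty$ (it would give $B_{2^jn_0}\to 0$), and $\psi(2)=1$ would force $W^{*2}$ to be a translate of $W$ in distribution, contradicting the nondegeneracy of $W$. Using $B_{n+1}/B_n\to 1$ to interpolate between integer multiples, a telescoping argument then extends $\psi$ to a continuous multiplicative function on $\R_+^*$: for every real $x>0$, $B_{\lfloor xn\rfloor}/B_n\to\psi(x)$, and $\psi$ must take the form $\psi(x)=x^{1/d}$ for some $d\in(0,\infty)$ (using $\psi(2)>1$). This is exactly regular variation of index $1/d$ for $B_n$, so Karamata's representation theorem gives $B_n=n^{1/d}L(n)$ for some slowly varying $L$.

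To establish $d\leq 2$, I exploit $f\notin L^2$ via Lemmas~\ref{lem:heart} and~\ref{lemDecritLambda}. Fix small $t>0$ with $|E(e^{\ic tW})|>0$; since $B_{n+1}/B_n\to 1$ and $\Lambda$ contains the dyadic intervals $[2^{-k-1},2^{-k}]$ for infinitely many $k$, the sequence $t/B_n$ visits $\Lambda$ along a subsequence $n_j\to\infty$. Along $n_j$, Lemma~\ref{lemDecritLambda} combined with $|\lambda(t/B_{n_j})|^{n_j}\to|E(e^{\ic tW})|>0$ gives $n_j\Phi(t/B_{n_j})=O(1)$; together with $\Phi(s)/s^2\to\infty$ from~\eqref{eq:pasdepbtdeux}, this forces $B_{n_j}^2/n_j\to\infty$, which for $B_n=n^{1/d}L(n)$ with $L$ slowly varying requires $1/d\geq 1/2$. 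The main obstacle is this final subsequence construction: one must carefully exploit $B_{n+1}/B_n\to 1$ to guarantee that $t/B_n$ meets the thin set $\Lambda$ infinitely often, since the sharp asymptotic $|\lambda(s)|^2=1-(2+o(1))\Phi(s)$ is available only on $\Lambda$. The remaining ingredients — convergence of types, the characteristic function computation producing $W^{*k}$, and Karamata's theorem — are standard.
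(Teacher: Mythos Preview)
Your route is quite different from the paper's, and the core idea---using Theorem~\ref{thmConvTypes} on $S_{kn}f$ to obtain $B_{kn}/B_n\to\psi(k)$---is sound. However, there is a real gap at the step ``a telescoping argument then extends $\psi$ to a continuous multiplicative function on $\R_+^*$''. Multiplicativity of $\psi$ on $\N^*$ alone does not force a power law (one may assign $\psi(p)$ freely on primes), and $B_{n+1}/B_n\to 1$ does not let you telescope from $B_{\lfloor rn\rfloor}$ to $B_{\lfloor xn\rfloor}$ when $|x-r|\,n\to\infty$: a product of order $|x-r|\,n$ factors, each of size $1+o(1)$, is completely uncontrolled. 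Thus the existence of $\lim_n B_{\lfloor xn\rfloor}/B_n$ for irrational $x$ is not established by your argument. The cleanest repair is to notice that your convergence-of-types step already yields $W^{*k}\stackrel{d}{=}\psi(k)W+a_k$ for every $k\ge 1$, i.e.\ $W$ is stable; the elementary structure of stable laws then forces $\psi(k)=k^{1/\alpha}$ with $\alpha\in(0,2]$, which also delivers $d\le 2$ for free and makes your separate $\Lambda$-subsequence argument superfluous. Alternatively, run the convergence-of-types argument directly for every real $\lambda>0$ after symmetrizing, so that only $|\lambda(t/B_n)|^{2n}\to E(e^{\ic tZ})$ enters and raising to the real power $\lfloor\lambda n\rfloor/n\to\lambda$ is unproblematic.

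The paper bypasses all of this. From $2n\log|\lambda(t/B_n)|\to\log E(e^{\ic tZ})$ (with $Z=W-W'$), together with $B_n\to\infty$ and $B_{n+1}/B_n\to 1$, it applies \cite[Proposition~1.9.4]{regular_variation} to conclude directly that $|\lambda(t)|^2=\exp(-t^d L_0(1/t))$ for some slowly varying $L_0$ and some real $d$, whence $E(e^{\ic tZ})=e^{-ct^d}$ for some $c>0$. The restriction $d\in(0,2]$ then falls out immediately from the fact that $e^{-ct^d}$ must be a characteristic function. Finally $n\sim CB_n^d/L_0(B_n)$ is asymptotically inverted via \cite[Theorem~1.5.12]{regular_variation} to give $B_n\sim n^{1/d}L(n)$. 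This is shorter and avoids both the extension-to-reals issue and the $\Lambda$-subsequence detour; what your approach would buy, once repaired via stability, is an explicit identification of $W$ as a stable law, which the paper does not need here.
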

\begin{proof}
Since $B_n\to \infty$, the convergence $(S_n f -A_n)/B_n \to W$
translates into: $e^{-\ic tA_n/B_n} \lambda(t/B_n)^n \to
E(e^{\ic tW})$ uniformly on small neighborhoods of $0$. Hence,
$|\lambda(t/B_n)|^{2n} \to |E(e^{\ic tW})|^2=E(e^{\ic tZ})$
where $Z\coloneqq W-W'$ is the difference of two independent
copies of $W$. Taking the logarithm, we get
  \begin{equation}
  \label{eqQSGqsg}
  2n\log |\lambda(t/B_n)|\to \log E(e^{\ic tZ}).
  \end{equation}
Since $B_n\to \infty$ and $B_{n+1}/B_n\to 1$, \cite[Proposition
1.9.4]{regular_variation} implies that, for $t>0$, one can
write $|\lambda(t)|^2=\exp(-t^d L_0(1/t))$ for some slowly
varying function $L_0$ and some real number $d$. Moreover,
$E(e^{\ic tZ})=e^{-ct^d}$ for some $c>0$. Since $E(e^{\ic tZ})$
is a characteristic function, this restricts the possible
values of $d$ to $d\in (0,2]$.

Let $t_0>0$ be such that $E(e^{\ic t_0Z})\in (0,1)$. The
convergence \eqref{eqQSGqsg} for $t=t_0$ becomes $n \sim C
B_n^d /L_0(B_n)$ for some $C>0$. Since $d>0$, the function
$x\mapsto C x^d /L_0(x)$ is asymptotically invertible by
\cite[Theorem 1.5.12]{regular_variation}, and admits an inverse
of the form $x\mapsto x^{1/d} L(x)$ where $L$ is slowly
varying. We get $B_n \sim n^{1/d}L(n)$.
\end{proof}

\begin{lem}
\label{lemEstimed}
The number $d$ given by Lemma~\ref{lemDonned} satisfies $d\leq
q+\epsilon/2$.
\end{lem}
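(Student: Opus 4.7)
The plan is to argue by contradiction. Suppose $d > q + \epsilon/2$ and set $\delta \coloneqq d - q - \epsilon/2 > 0$; I need to derive a contradiction from this.

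The first step is to combine what has already been proved along $\Lambda$. Lemma \ref{lemDecritLambda} gives $|\lambda(t)|^2 = 1 - (2+o(1))\Phi(t)$ for $t \in \Lambda$, while Lemma \ref{lemDefLambda} gives $\Phi(t) \geq |t|^{q+\epsilon/2}$ on the same set. So for $t \in \Lambda$ sufficiently close to $0$,
\[
  1 - |\lambda(t)|^2 \geq (2+o(1))\, |t|^{q+\epsilon/2}.
\]

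The second step is to feed in the other available description of $|\lambda(t)|^2$, namely the representation $|\lambda(t)|^2 = \exp(-t^d L_0(1/t))$ obtained in the proof of Lemma \ref{lemDonned}, which holds for \emph{all} small $t > 0$ (not just along $\Lambda$) thanks to $B_{n+1}/B_n\to 1$. Since $\lambda$ is continuous at $0$ with $\lambda(0)=1$, necessarily $t^d L_0(1/t) \to 0$, so a first-order expansion of the exponential gives $1 - |\lambda(t)|^2 = (1 + o(1))\, t^d L_0(1/t)$. Combining with the lower bound above, I obtain, for $t \in \Lambda$ small,
\[
  t^d L_0(1/t) \geq (2 + o(1))\, |t|^{q+\epsilon/2},
\]
equivalently $L_0(1/t) \geq (2+o(1))\, (1/t)^{\delta}$ along $\Lambda$.

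The last step is to invoke slow variation of $L_0$. Since $\Lambda$ accumulates at $0$, the variable $x = 1/t$ ranges over an unbounded set, producing arbitrarily large $x$ for which $L_0(x) \geq (2 + o(1))\, x^{\delta}$. This is impossible: it is a standard consequence of slow variation that $L_0(x)/x^{\delta} \to 0$ as $x \to \infty$ for every $\delta > 0$ (see, e.g., \cite[Proposition 1.3.6]{regular_variation}). This contradiction forces $d \leq q + \epsilon/2$. I do not anticipate any real obstacle: all the nontrivial work, namely matching $|\lambda(t)|^2$ with $\exp(-t^d L_0(1/t))$ on one side and with $1 - (2+o(1))\Phi(t)$ on the other, has been done in the previous lemmas, and the conclusion only uses the polynomial tameness of slowly varying functions.
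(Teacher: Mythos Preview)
Your proof is correct and follows essentially the same strategy as the paper: combine the lower bound $1-|\lambda(t)|^2 \geq (2+o(1))|t|^{q+\epsilon/2}$ on $\Lambda$ with the regular-variation structure of $|\lambda(t)|^2$, and observe that slow variation cannot sustain a genuine power-law lower bound along an unbounded sequence. The only cosmetic difference is that the paper runs the argument on the sequence side (via $t_0/B_{j(n)}$ and $B_n\sim n^{1/d}L(n)$), while you work directly with the representation $|\lambda(t)|^2=\exp(-t^d L_0(1/t))$; both reduce to the same incompatibility.
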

\begin{proof}
Let $t_0>0$ satisfy $E(e^{\ic t_0Z})\in (0,1)$. The sequence
$|\lambda(t_0/B_n)|^{2n}$ converges to $E(e^{\ic t_0 Z})$.
Taking logarithms, we obtain the existence of $a>0$ such that
  \begin{equation}
  \label{eqSFOQSDFqsdf}
  -n \log \left|\lambda \left(\frac{t_0}{B_n} \right) \right|^2 \to a.
  \end{equation}

Since $B_{n+1}/B_n \to 1$, there exists $j(n)\to \infty$ such
that $t_0/B_{j(n)} \in \Lambda$ (where $\Lambda$ is defined in
Lemma~\ref{lemDefLambda}). Along this sequence, we have
$|\lambda(t_0/B_{j(n)})|^2=1-(2+o(1))\Phi(t_0/B_{j(n)})$ by
Lemma~\ref{lemDecritLambda}. Taking the logarithm and using
\eqref{eqSFOQSDFqsdf}, we obtain $j(n)\Phi(t_0/B_{j(n)}) \to
a/2$. By \eqref{eqMinorePhi}, this yields
$j(n)/B_{j(n)}^{q+\epsilon/2}=O(1)$. Moreover, by
Lemma~\ref{lemDonned} ,
  \begin{equation}
  j(n)/B_{j(n)}^{q+\epsilon/2}
  \sim j(n)^{1-(q+\epsilon/2)/d} /L(j(n))^{q+\epsilon/2}.
  \end{equation}
This sequence can be bounded only if $1- (q+\epsilon/2)/d \leq
0$, concluding the proof.
\end{proof}

\begin{proof}[Proof of Proposition~\ref{propEquiv}]
For small enough $t$, $|\lambda(t)-1|<1/2$. Hence, it is
possible to define $\log \lambda(t)$ by the series
$\log(1-s)=-\sum s^k/k$. Since the logarithm is a Lipschitz
function, \eqref{eqPasL2Bis} gives $\log \lambda(t)=\log
E(e^{\ic tf})+O(|t|^{q+\epsilon})+o(\Phi(t))$. Moreover,
$1-E(e^{\ic tf})=\Phi(t)- \ic E(\sin tf)$, hence
  \begin{equation}
  -\log E(e^{\ic tf})=\Phi(t) -\ic E(\sin tf)+o(\Phi(t))+O( |E(\sin tf)|^2).
  \end{equation}
Moreover, $|E(\sin(tf))|=|\Ima E(e^{\ic tf}-1)|\leq E|e^{\ic
tf}-1|$. Using \eqref{eq:ljkqmoiu}, we obtain
$|E(\sin(tf))|^2=o(\Phi(t))$. We have proved
  \begin{equation}
  \label{eqQSDFQSDF}
  -\log \lambda(t)=\Phi(t)-\ic E(\sin tf)+o(\Phi(t))+O(|t|^{q+\epsilon}).
  \end{equation}

The convergence $(S_n f-A_n)/B_n \to W$ also reads $e^{-\ic
tA_n/B_n} \lambda(t/B_n)^n \to E(e^{\ic tW})$. By
\eqref{eqQSDFQSDF}, the left hand side is
  \begin{equation*}
  \exp\Bigl(-\ic tA_n/B_n -n \Phi(t/B_n)+n\ic E(\sin(tf/B_n))+o(n\Phi(t/B_n))+O(n
/B_n^{q+\epsilon})\Bigr).
  \end{equation*}
By Lemma~\ref{lemEstimed}, $n/B_n^{q+\epsilon}$ tends to $0$
when $n\to \infty$. Hence, the last equation can also be
written as
  \begin{equation}
  \label{eqEstimeLambdan}
  \exp\Bigl(-\ic tA_n/B_n -n
\Phi(t/B_n)+n\ic E(\sin(tf/B_n))+o(n\Phi(t/B_n))+o(1))\Bigr).
  \end{equation}
To prove the desired convergence of $(\tilde S_n \tilde
f-A_n)/B_n$ to $W$, we should prove that $e^{-\ic tA_n/B_n}
\tilde \lambda(t/B_n)^n$ converges to $E(e^{\ic tW})$. The
previous arguments also apply to $\tilde \lambda$, and show
that
  \begin{multline}
  \label{eqTildeLambda}
  e^{-\ic tA_n/B_n} \tilde \lambda\left(\frac{t}{B_n} \right)^n=
  \\ \exp\Bigl(-\ic tA_n/B_n -n \Phi(t/B_n)+n\ic E(\sin(tf/B_n))+\tilde
o(n\Phi(t/B_n))+\tilde
  o(1))\Bigr),
  \end{multline}
where we have used the notation $\tilde o$ to emphasize the
fact that these negligible terms may be different from those in
\eqref{eqEstimeLambdan}.

Let us now conclude the proof by showing that
\eqref{eqTildeLambda} converges to $E(e^{\ic tW})$, using the
fact that \eqref{eqEstimeLambdan} converges to $E(e^{\ic tW})$.
The only possible problem comes from the negligible term
$\tilde o(n\Phi(t/B_n))$ (since the term $\tilde o(1)$ has no
influence on the limit).

We treat two cases. Assume first that $E(e^{\ic tW})\not=0$.
Then the modulus of $\lambda(t/B_n)^n$ converges to a nonzero
real number. In particular, $n\Phi(t/B_n)$ converges, which
implies that $\tilde o(n \Phi(t/B_n))$ converges to $0$. This
concludes the proof in this case.

Assume now that $E(e^{\ic tW})=0$. This implies that the
modulus of $\lambda(t/B_n)^n$ converges to $0$. By
\eqref{eqEstimeLambdan}, this yields $n\Phi(t/B_n)\to +\infty$.
In this case, we have no control on the argument of $e^{-\ic
tA_n/B_n}\tilde\lambda(t/B_n)^n$ (since the term $\tilde o(n
\Phi(t/B_n))$ may very well not tend to $0$), but its modulus
tends to $0$. This is sufficient to get again $e^{-\ic
tA_n/B_n}\tilde \lambda(t/B_n)^n \to 0=E(e^{\ic tW})$. This
concludes the proof of Proposition~\ref{propEquiv}.
\end{proof}


\section{Characteristic expansions for Gibbs-Markov maps}
\label{sec_proof_GM}

\subsection{The accurate characteristic expansion for
non-integrable functions}

Let us fix a mixing probability preserving Gibbs-Markov map
$T:X\to X$, as well as a measurable function $f:X\to \R$ with
$\sum m(a)Df(a)^\eta<\infty$ for some $\eta\in (0,1]$.

Let $\Transf$ denote the transfer operator associated to $T$
(defined by duality by $\int u\cdot v\circ T\dd m= \int \Transf
u \cdot v\dd m$). It is given explicitly by
  \begin{equation}
  \label{defLp}
  \Transf u(x)=\sum_{Ty=x} g(y)u(y),
  \end{equation}
where $g$ is the inverse of the jacobian of $T$. We will need
the following inequality: there exists a constant $C$ such that
  \begin{equation}
  \label{eq_bounds_jac}
  C^{-1}m(a)\leq g(x)\leq C m(a)
  \end{equation}
for any $a\in\alpha$ and $x\in a$. This follows from the
assumption of bounded distortion for Gibbs-Markov maps.

Let $\boL$ be the space of bounded functions $u:X\to \C$ such
that
  \begin{equation}
  \sup_{a\in\alpha} \sup_{x,y\in a} |u(x)-u(y)|/ d(x,y)^\eta<\infty.
  \end{equation}
Then $\Transf$ acts continuously on $\boL$, has a simple
eigenvalue at $1$ and the rest of its spectrum is contained in
a disk of radius $<1$. Moreover, it satisfies an inequality
  \begin{equation}
  \label{eqLY}
  \norm{\Transf^n u}_{\boL}\leq C\gamma^n
  \norm{u}_\boL+C\norm{u}_{L^1},
  \end{equation}
for some $\gamma<1$. This follows from \cite[Proposition 1.4
and Theorem 1.6]{aaronson_denker}.

Let us now define a perturbed transfer operator $\Transf_t$ by
$\Transf_t(u)=\Transf(e^{\ic t f} u)$. Using the estimate $\sum
m(a)Df(a)^\eta<\infty$, one can check that the operator
$\Transf_t$ acts continuously on $\boL$, and
  \begin{equation}
  \label{eq:EstimeClose}
  \norm{\Transf_t-\Transf}_{\boL\to\boL} =O(|t|^\eta+E|e^{\ic tf}-1|).
  \end{equation}
This follows from Lemma 3.5 and the proof of Corollary 3.6 in
\cite{gouezel_stable}.

The estimate~\eqref{eq:EstimeClose} is a strong continuity
estimate. We can therefore apply the following classical
perturbation theorem (which follows for instance from
\cite[Sections III.6.4 and IV.3.3]{kato_pe}).
\begin{thm}
\label{thm:kato} Let $A$ be a continuous operator on a Banach
space $\BB$, for which $1$ is a simple eigenvalue, and the rest
of its spectrum is contained in a disk of radius $<1$. Let
$A_t$ (for small enough $t$) be a family of continuous
operators on $\BB$, such that $\norm{A_t-A}_{\BB\to \BB} \to 0$
when $t\to 0$.

Then, for any small enough $t$, there exists a decomposition
$E_t\oplus F_t$ of $\BB$ into a one-dimensional subspace and a
closed hyperplane, such that $E_t$ and $F_t$ are invariant
under $A_t$. Moreover, $A_t$ is the multiplication by a scalar
$\lambda(t)$ on $E_t$, while $\norm{ (A_t)_{|F_t}^n}_{\BB\to
\BB} \leq C \gamma^n$ for some $\gamma<1$ and $C>0$.

The eigenvalue $\lambda(t)$ and the projection $P_t$ on $E_t$
with kernel $F_t$ satisfy
  \begin{equation}
  |\lambda(t)-1|\leq C \norm{A_t-A}_{\BB\to \BB}
  \end{equation}
and
  \begin{equation}
  \label{eqControleProjecteurFort}
  \norm{P_t-P_0}_{\BB\to \BB} \leq C \norm{A_t-A}_{\BB\to \BB}.
  \end{equation}
\end{thm}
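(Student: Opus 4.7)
The plan is to deduce Theorem~\ref{thm:kato} from the holomorphic functional calculus, following the standard Dunford--Kato approach. Because $1$ is a simple isolated eigenvalue of $A$ and the remainder of $\sigma(A)$ is contained in a disk $\{|z|\leq \rad\}$ for some $\rad<1$, I fix a small positively oriented circle $\Gamma_1=\{|z-1|=\epsilon\}$ with $\epsilon<(1-\rad)/3$ (so $\Gamma_1$ isolates the eigenvalue $1$) and a circle $\Gamma_0=\{|z|=\rad'\}$ with $\rad<\rad'<1-\epsilon$ enclosing the rest of $\sigma(A)$. On $\Gamma_0\cup\Gamma_1\subset\rho(A)$ the resolvent $R(z,A)=(zI-A)^{-1}$ is bounded, and I set $M\coloneqq\sup_{z\in \Gamma_0\cup\Gamma_1}\norm{R(z,A)}$.

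First I would show that, whenever $\norm{A_t-A}_{\BB\to\BB}\leq 1/(2M)$, the factorization $zI-A_t=(zI-A)(I-R(z,A)(A_t-A))$ together with the Neumann series gives $\Gamma_0\cup\Gamma_1\subset\rho(A_t)$ and the uniform estimate $\norm{R(z,A_t)}\leq 2M$ on both circles. The Riesz projections
\begin{equation*}
P_t\coloneqq \frac{1}{2\pi\ic}\oint_{\Gamma_1} R(z,A_t)\dd z
\end{equation*}
are then well-defined, commute with $A_t$, and make $E_t\coloneqq P_t\BB$ and $F_t\coloneqq \ker P_t$ invariant under $A_t$. The resolvent identity $R(z,A_t)-R(z,A)=R(z,A_t)(A_t-A)R(z,A)$, integrated along $\Gamma_1$, produces directly the bound \eqref{eqControleProjecteurFort}. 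Once $\norm{P_t-P_0}<1$, the standard fact that two projections within distance $1$ have the same rank forces $\dim E_t=1$, so $A_t$ acts on $E_t$ by multiplication by a unique scalar $\lambda(t)$, the sole eigenvalue of $A_t$ enclosed by $\Gamma_1$.

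To bound $|\lambda(t)-1|$, write $P_0 x=\phi_0(x)v_0$ for the rank-one projection $P_0$, and set $v_t\coloneqq P_t v_0$. Then $\phi_0(v_t)\to 1$ while $\lambda(t)\phi_0(v_t)=\phi_0(A_tv_t)$, and the identity $A_tv_t-v_0=(A_t-A)v_t+A(P_t-P_0)v_0$ shows that $\norm{A_tv_t-v_0}=O(\norm{A_t-A})$; dividing yields $|\lambda(t)-1|\leq C\norm{A_t-A}$. For the geometric decay on $F_t$, the restriction $(A_t)_{|F_t}$ has spectrum $\sigma(A_t)\setminus\{\lambda(t)\}$ trapped inside $\Gamma_0$, and the Dunford--Taylor formula yields
\begin{equation*}
(A_t)_{|F_t}^n=\frac{1}{2\pi\ic}\oint_{\Gamma_0} z^n R(z,(A_t)_{|F_t})\dd z.
\end{equation*}
Combined with the uniform resolvent bound $\norm{R(z,(A_t)_{|F_t})}\leq 2M$ on $\Gamma_0$, this gives $\norm{(A_t)_{|F_t}^n}\leq C(\rad')^n$, i.e.\ the claimed decay with any $\gamma\in(\rad',1)$.

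The main subtlety, as usual in Kato's theory, is the spectral separation: one must guarantee that for small $\norm{A_t-A}$ the spectrum of $A_t$ really splits cleanly into $\{\lambda(t)\}$ inside $\Gamma_1$ and the remainder inside $\Gamma_0$. The Neumann argument above handles this automatically, since it simultaneously establishes $\Gamma_0\cup\Gamma_1\subset\rho(A_t)$ and the norm continuity of the associated Riesz projections, and rank continuity of these projections then pins down the dimension of $E_t$.
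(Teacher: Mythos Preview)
The paper does not actually prove this theorem: it states it as a classical result and refers to \cite[Sections III.6.4 and IV.3.3]{kato_pe}. Your proposal is a correct and self-contained account of exactly the Dunford--Kato/Riesz-projection argument one finds there, so there is nothing to compare against beyond saying that you have filled in what the paper leaves to the reference.
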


This theorem yields an eigenvalue $\lambda(t)$ of $\Transf_t$
for small $t$, and an eigenfunction $\xi_t=P_t1 /\int P_t1$
such that $\int \xi_t=1$ and
  \begin{equation}
  \label{eqEstimexit}
  \norm{\xi_t-1}_{\boL}=O( |t|^\eta+ E|e^{\ic tf}-1|),
  \end{equation}
by \eqref{eqControleProjecteurFort}.

We have
  \begin{equation}
  \label{eqcarac}
  E(e^{\ic tS_nf})=\int \Transf_t^n(1) =\lambda(t)^n \int  P_t 1 + O(\gamma^n)
  =\mu(t)\lambda(t)^n+O(\gamma^n),
  \end{equation}
for $\mu(t)=\int P_t 1$. This proves that $f$ admits a
characteristic expansion. To prove Proposition~\ref{propGM}, we
have to show that this expansion is accurate, i.e., to get
precise estimates on $\lambda(t)$. We have
  \begin{equation}
  \lambda(t)=\int \lambda(t)\xi_t=\int \Transf_t \xi_t
  = \int\Transf_t 1 + \int (\Transf_t-\Transf)(\xi_t-1),
  \end{equation}
hence
  \begin{equation}
  \label{eqCentraleLambdat}
  \lambda(t)=E(e^{\ic tf})+ \int (e^{\ic tf}-1) (\xi_t-1).
  \end{equation}

When $\eta=1$ (i.e., $\sum m(a)Df(a)<\infty$) and $f\not\in
L^2$, \eqref{eqCentraleLambdat} together with the estimate
\eqref{eqEstimexit} readily imply that the characteristic
expansion of $f$ is accurate, concluding the proof of
Proposition~\ref{propGM} in this case. The general case
requires more work.

We first deal with the case $f\not\in L^{1+\eta/2}$. In this
case, we already have enough information to conclude:

\begin{lem}
\label{lempasL1} If $f\not\in L^{1+\eta/2}$, then $f$ admits an
accurate characteristic expansion.
\end{lem}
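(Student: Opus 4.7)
The plan starts from identity \eqref{eqCentraleLambdat}, which reduces the problem to bounding $\int(e^{\ic tf}-1)(\xi_t-1)\,\de m$ in the form prescribed by \eqref{eqPasL2}. I would estimate this integral crudely by $\|\xi_t-1\|_\infty\cdot E|e^{\ic tf}-1|$ and insert \eqref{eqEstimexit} (using $\|\cdot\|_\infty\le\|\cdot\|_\boL$) to expand the right-hand side as
$$C|t|^\eta\, E|e^{\ic tf}-1|\;+\;C\bigl(E|e^{\ic tf}-1|\bigr)^2.$$

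The square contribution $(E|e^{\ic tf}-1|)^2$ sits directly inside the $O((\int|e^{\ic tf}-1|)^2)$ channel of \eqref{eqPasL2}. Since the hypothesis $f\notin L^{1+\eta/2}$ forces $f\notin L^2$, Lemma~\ref{lem:heart} additionally upgrades this term to $o(\Phi(t))$, so no further work is needed on that piece.

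For the cross contribution $|t|^\eta E|e^{\ic tf}-1|$, the plan is to apply Young's inequality with conjugate exponents $(p,p')$:
$$|t|^\eta\, E|e^{\ic tf}-1|\le \frac{|t|^{\eta p}}{p}+\frac{\bigl(E|e^{\ic tf}-1|\bigr)^{p'}}{p'}.$$
One picks $q\in[1+\eta/2,2]$ with $f\notin L^q$ (guaranteed by the hypothesis) and tunes $p$ so that $\eta p=q+\epsilon$ for a small $\epsilon>0$; the first piece then lands in the $O(|t|^{q+\epsilon})$ channel of \eqref{eqPasL2}. For the second piece, the bound $E|e^{\ic tf}-1|\le\sqrt{2\Phi(t)}$ combined with the choice $p'>2$ yields $(E|e^{\ic tf}-1|)^{p'}=o(\Phi(t))$, which fits the $o(\int|e^{\ic tf}-1|^2)$ channel.

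The main technical point will be the exponent bookkeeping: to dispatch the Young remainder into $o(\Phi(t))$ one needs $p<2$ (so $p'>2$), while to reach the $O(|t|^{q+\epsilon})$ channel one needs $\eta p\ge 1+\eta/2+\epsilon$. I expect that the flexibility in choosing $q$ anywhere in $[1+\eta/2,2]$ (rather than fixing $q=1+\eta/2$), together with the freedom to reallocate part of the estimate into the $O(t^2)$ channel of \eqref{eqPasL2}, is precisely what makes the bookkeeping close for every $\eta\in(0,1]$; and of course no weak perturbation or interpolation is needed at this stage, consistent with the remark that in this case one already has enough information to conclude.
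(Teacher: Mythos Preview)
Your plan works for $\eta>2/3$ but breaks down for smaller $\eta$, and the flexibility you invoke does not rescue it. The Young split requires simultaneously $p'>2$ (so that $(E|e^{\ic tf}-1|)^{p'}\le(2\Phi(t))^{p'/2}=o(\Phi(t))$) and $\eta p>q$ for some $q\le 2$ with $f\notin L^q$. In the worst case allowed by the hypothesis---namely $f\in L^r$ for every $r<1+\eta/2$ but $f\notin L^{1+\eta/2}$---the smallest admissible $q$ is $1+\eta/2$, so you need $\eta p>1+\eta/2$ together with $p<2$. These are compatible only when $2\eta>1+\eta/2$, i.e.\ $\eta>2/3$. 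Routing the $|t|^{\eta p}$ piece into the $O(t^2)$ channel instead would require $\eta p\ge 2$ with $p<2$, hence $\eta>1$, which is impossible; and taking a larger $q$ only tightens the constraint on $p$. So the bookkeeping genuinely fails to close for $\eta\le 2/3$.

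The paper's argument avoids Young entirely and is both simpler and uniform in $\eta$. It chooses $p\in[0,1]$ with $f\in L^p$ but $f\notin L^{p+\eta/2}$ (such $p$ exists precisely because $f\notin L^{1+\eta/2}$), and uses the elementary bound $|e^{\ic x}-1|\le 2|x|^p$ to get $E|e^{\ic tf}-1|\le 2|t|^p\int|f|^p=O(|t|^p)$. The cross term $|t|^\eta\,E|e^{\ic tf}-1|$ is then $O(|t|^{p+\eta})$, which lands in the $O(|t|^{q+\epsilon})$ channel with $q=p+\eta/2$ and $\epsilon=\eta/2$. The key difference from your attempt is that the paper exploits whatever integrability $f$ does possess to turn $E|e^{\ic tf}-1|$ itself into a power of $|t|$, rather than trying to trade it against $|t|^\eta$ via Young.
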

\begin{proof}
The equation \eqref{eqCentraleLambdat} together with
\eqref{eqEstimexit} yield
  \begin{equation}
  \label{eqisoiuqdfpoqs}
  \lambda(t)=E(e^{\ic tf})+ O\left(|t|^\eta\cdot \int |e^{\ic tf}-1|\right)+ O\left(
  \int |e^{\ic tf}-1|\right)^2.
  \end{equation}
Let $p\in [0,1]$ be such that $f\in L^p$ and $f\not\in
L^{p+\eta/2}$ (we use the convention that every measurable
function belongs to $L^0$). For any $x\in \R$, $|e^{\ic
x}-1|\leq 2 |x|^p$. Then
  \begin{equation}
  |t|^\eta \int |e^{\ic tf}-1|
  \leq 2 |t|^\eta \int |t|^p |f|^p
  \leq C |t|^{p+\eta}.
  \end{equation}
This yields the accurate characteristic expansion
\eqref{eqPasL2} as desired, for $q=p+\eta/2$ and
$\epsilon=\eta/2$.
\end{proof}

The case where $f\in L^{1+\eta/2}$ is a lot trickier. It
requires a more general spectral perturbation theorem,
essentially due to Keller and Liverani. Unfortunately, this
theorem is sufficient only when there exists $q<2$ such that
$f\not\in L^q$, while the remaining case can only be treated
using a generalization of this theorem, involving several
successive derivatives of the operators, that we will describe
in the next paragraph.

\subsection{A general spectral theorem}
\label{seckl}
In this paragraph, we describe a general spectral theorem
extending the results of \cite{keller_liverani} to the case of
several derivatives. A very similar result has been proved in
\cite{gouezel_liverani}, but with slightly stronger assumptions
that will not be satisfied in the forthcoming application to
Gibbs-Markov maps (in particular, \cite{gouezel_liverani}
requires \eqref{eq:Qbound} below to hold for $0\leq i<j\leq N$,
instead of $1\leq i<j\leq N$). Let us also mention
\cite{herve_pene} for related results.

Let $\BB_0\supset \BB_1\supset\dots \supset \BB_N$, $N\in
\N^*$, be a finite family of Banach spaces, let $I\subset \R$
be a fixed open interval containing $0$, and let
$\{\Lp_t\}_{t\in I}$ be a family of operators acting on each of
the above Banach spaces. Let also $b_0,b_1,\dots,b_{N-1}\in
(0,1]$ (usually, $b_i=1$ for $i\geq 1$). Let
$b(i,j)=\sum_{k=i}^{j-1}b_k$ for $0\leq i\leq j\leq N$. Assume
that
  \begin{equation}\label{eq:lypert1}
  \exists M>0, \forall\, t\in I,\quad
  \norm{\Lp_t^n f}_{\BB_0} \leq CM^n \norm{f}_{\BB_0}
  \end{equation}
and
  \begin{equation}\label{eq:lypert2}
  \exists\, \gamma<M,\;\;\forall\, t\in I,\quad
  \norm{\Lp_t^n f}_{\BB_1} \leq C \gamma^n \norm{f}_{\BB_1} +
  CM^n \norm{f}_{\BB_0}.
  \end{equation}
Assume also that there exist operators $Q_1,\ldots,Q_{N-1}$
satisfying the following properties:
  \begin{equation}\label{eq:Qbound}
  \forall\, 1\leq i <j \leq N,\quad
  \norm{Q_{j-i}}_{\BB_j\to \BB_{i}} \leq C
  \end{equation}
and, setting $\Delta_0(t):=\Lp_t$ and $\Delta_j(t):=\Lp_t
-\Lp_0 -\sum_{k=1}^{j-1} t^k Q_k$ for $j\geq 1$,
  \begin{equation}\label{eq:Cksmooth}
  \forall\, t\in I,\; \;
  \forall 0\leq i \leq j\leq N,
  \quad
  \norm{\Delta_{j-i}(t)}_{\BB_j \to \BB_{i}} \leq C|t|^{b(i,j)}.
  \end{equation}
These assumptions mean that $t\mapsto \Lp_t$ is continuous at
$t=0$ as a function from $\BB_i$ to $\BB_{i-1}$, and that
$t\mapsto \Lp_t$ even has a Taylor expansion of order $N-1$,
but the differentials take their values in weaker spaces.

For $\rad >\gamma$ and $\delta>0$, denote by $V_{\delta,\rad}$
the set of complex numbers $z$ such that $|z|\geq \rad$ and,
for all $1\leq k\leq N$, the distance from $z$ to the spectrum
of $\Lp_0$ acting on $\BB_k$ is $\geq \delta$.

\begin{thm}
\label{thm_1} Given a family of operators $\{\Lp_t\}_{t\in I}$
satisfying conditions \eqref{eq:lypert1}, \eqref{eq:lypert2},
\eqref{eq:Qbound} and \eqref{eq:Cksmooth} and setting
  \begin{equation*}
  R_N(t):= \sum_{k=0}^{N-1} t^k
  \sum_{\ell_1+\dots+\ell_j=k} (z-\Lp_0)^{-1} Q_{\ell_1} (z-\Lp_0)^{-1} \dots
  (z-\Lp_0)^{-1} Q_{\ell_j} (z-\Lp_0)^{-1},
  \end{equation*}
for all $z\in V_{\delta,\rad}$ and $t$ small enough, we have
  \begin{equation*}
  \norm{ (z-\Lp_t)^{-1} -R_N(t)}_{\BB_N \to \BB_0} \leq C
  |t|^{\kappa b_0+ b(1,N)}
  \end{equation*}
where $\kappa =\frac{\log(\rad/\gamma)}{\log(M/\gamma)}$.
\end{thm}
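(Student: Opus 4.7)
The plan is to follow the Keller--Liverani strategy, extended to $N$ derivatives as in \cite{gouezel_liverani}, with an extra adaptation to handle the weakening of the hypothesis \eqref{eq:Qbound} to the range $1\le i<j\le N$. The argument splits into three steps.

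First, I would establish a base Keller--Liverani estimate. Using \eqref{eq:lypert1}, \eqref{eq:lypert2}, and the continuity $\|\Lp_t-\Lp_0\|_{\BB_1\to\BB_0}\le C|t|^{b_0}$ (which is the $i=0,\,j=1$ case of \eqref{eq:Cksmooth}, combined with $\Delta_1(t)=\Lp_t-\Lp_0$), I would show that for $z\in V_{\delta,\rad}$ and small $|t|$ the resolvent $(z-\Lp_t)^{-1}$ is uniformly bounded on each $\BB_k$ for $1\le k\le N$, and that
\[\|(z-\Lp_t)^{-1}-(z-\Lp_0)^{-1}\|_{\BB_1\to\BB_0}\le C|t|^{\kappa b_0}.\]
This is the standard KL interpolation: one combines the trivial bound $M^n$ on $\BB_0$ from \eqref{eq:lypert1} with the exponentially decaying bound $\gamma^n$ on $\BB_1$ from \eqref{eq:lypert2}, then optimizes over the cutoff $n\sim\log(1/|t|^{b_0})/\log(M/\gamma)$ to produce the exponent $\kappa=\log(\rad/\gamma)/\log(M/\gamma)$.

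Next, I would iterate the resolvent identity $N$ times:
\[(z-\Lp_t)^{-1}=\sum_{j=0}^{N-1}\bigl[(z-\Lp_0)^{-1}(\Lp_t-\Lp_0)\bigr]^j(z-\Lp_0)^{-1}+\bigl[(z-\Lp_0)^{-1}(\Lp_t-\Lp_0)\bigr]^N(z-\Lp_t)^{-1},\]
and in each factor of $\Lp_t-\Lp_0$ substitute the Taylor decomposition $\Lp_t-\Lp_0=\sum_{k=1}^{m-1}t^kQ_k+\Delta_m(t)$, with the order $m$ chosen according to the regularity budget available at that position. Viewed as a map $\BB_N\to\BB_0$, each interior $(z-\Lp_0)^{-1}$ acts on some $\BB_k$ with $k\ge 1$, where $z$ is resolvent-regular by the very definition of $V_{\delta,\rad}$, and each $Q_\ell$ steps down in the scale as allowed by \eqref{eq:Qbound}.

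Finally, I would collect terms by total $t$-degree. The purely polynomial contributions whose combined degree is at most $N-1$ reassemble exactly into $R_N(t)$. Every remaining term contains at least one $\Delta_m(t)$ factor; after distributing the regularity loss along the product so that each interior factor consumes $b_k$ units of regularity at level $k$, the bounds from \eqref{eq:Cksmooth} combine to give an interior contribution of $|t|^{b_1+\cdots+b_{N-1}}=|t|^{b(1,N)}$, while the rightmost $(\Lp_t-\Lp_0)$ factor -- which must bridge $\BB_1$ down to $\BB_0$, where the unperturbed resolvent is \emph{not} controlled -- is absorbed via the base estimate of the first step, yielding the remaining factor $|t|^{\kappa b_0}$. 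Together these give the claimed bound $|t|^{\kappa b_0+b(1,N)}$. The main obstacle is precisely this last point: because \eqref{eq:Qbound} is restricted to $1\le i<j\le N$, one cannot apply any $Q_\ell$ directly into $\BB_0$, so the rightmost factor cannot be handled by the Taylor expansion alone and must go through the KL estimate of Step~1; this is what produces the exponent $\kappa$ (rather than $1$) in front of $b_0$, and is exactly the reason the result of \cite{gouezel_liverani} does not apply out of the box in this weaker setting.
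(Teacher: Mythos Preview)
Your overall plan matches the paper's: iterate the resolvent identity, Taylor-expand each $\Lp_t-\Lp_0$ factor with a position-dependent order, collect the polynomial part into $R_N(t)$, and bound the remainders. But there is a genuine gap in how you handle the error terms.

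The Keller--Liverani estimate you isolate in Step~1, namely $\norm{(z-\Lp_t)^{-1}-(z-\Lp_0)^{-1}}_{\BB_1\to\BB_0}\le C|t|^{\kappa b_0}$, disposes of only one remainder --- the one that still contains $(z-\Lp_t)^{-1}$. In the paper's iteration (which goes $N-1$ steps, then adds and subtracts) this is the term $[(z-\Lp_t)^{-1}-(z-\Lp_0)^{-1}]\,A(z,t)^{N-1}$ with $A(z,t)=(\Lp_t-\Lp_0)(z-\Lp_0)^{-1}$. After you Taylor-expand the $\Lp_t-\Lp_0$ factors inside the polynomial part, however, you also produce error terms of the shape
\[
(z-\Lp_0)^{-1}\,F(t),\qquad F(t)=A(z,t)^{r}\,\tilde\Delta_{m}(t)\,\tilde Q_{\uell},
\]
and these contain no factor of $(z-\Lp_t)^{-1}$ at all; the operator sitting at the $\BB_1\to\BB_0$ interface is $(z-\Lp_0)^{-1}$ composed with a $Q_\ell$ or a $\Delta_m$, not with $\Lp_t-\Lp_0$, so your KL estimate says nothing about it. What the paper uses instead is the two-norm bound of Lemma~\ref{lemma},
\[
\norm{(z-\Lp_0)^{-1}u}_{\BB_0}\le C\tau^{\kappa}\norm{u}_{\BB_1}+C\tau^{\kappa-1}\norm{u}_{\BB_0},
\]
combined with \emph{two} estimates on $F(t)$: one as $\BB_N\to\BB_1$ with norm $C|t|^{b(1,N)}$, and a second as $\BB_N\to\BB_0$ with norm $C|t|^{b(0,N)}$ (the latter obtained by letting the $\Delta$ drop one extra level, using $\tilde\Delta_k=\tilde\Delta_{k-1}-t^k\tilde Q_k$). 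Taking $\tau=|t|^{b_0}$ then gives exactly $|t|^{\kappa b_0+b(1,N)}$. Your Step~1 does contain the cutoff/interpolation idea behind Lemma~\ref{lemma}, but you package it only as a bound on $(z-\Lp_t)^{-1}-(z-\Lp_0)^{-1}$; the missing step is to state and apply it directly to $(z-\Lp_0)^{-1}$, and to produce the companion $\BB_N\to\BB_0$ bound on each $F(t)$.
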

Hence, the resolvent $(z-\Lp_t)^{-1}$ depends on $t$ in a
$C^{\kappa b_0+ b(1,N)}$ way at $t=0$, when viewed as an
operator from $\BB_N$ to $\BB_0$.

Notice that one of the results of \cite{keller_liverani} in the
present setting reads
 \begin{equation}\label{eq:klest}
  \norm{ (z-\Lp_t)^{-1}- (z-\Lp_0)^{-1}}_{\BB_1 \to \BB_{0}} \leq C |t|^{\kappa b_0}.
  \end{equation}
Accordingly, one has Theorem \ref{thm_1} in the case $N=1$
where no assumption is made on the existence of the operators
$Q_j$.

We will use the following estimate of \cite{keller_liverani}:
\begin{lem}
\label{lemma} For any small enough $\tau$ and any $z\in
V_{\delta,\rad}$, we have
  \begin{equation}
  \norm{ (z-\Lp_0)^{-1}u}_{\BB_0}\leq C
\tau^{\kappa} \norm{u}_{\BB_1}+ C \tau^{\kappa-1} \norm{u}_{\BB_0}.
  \end{equation}
\end{lem}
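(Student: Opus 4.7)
The plan is to prove the bound by truncating the Neumann series for the resolvent and balancing the two Lasota--Yorke inequalities against each other, as in Keller--Liverani. The starting point is the purely algebraic identity
\[
(z-\Lp_0)^{-1}u=\sum_{n=0}^{N-1}z^{-n-1}\Lp_0^n u+z^{-N}(z-\Lp_0)^{-1}\Lp_0^N u,
\]
valid for every integer $N\geq 0$, every $u\in\BB_1$, and every $z$ outside the spectrum of $\Lp_0$ acting on $\BB_1$; one verifies it by multiplying both sides on the left by $z-\Lp_0$ and telescoping. For $z\in V_{\delta,\rad}$, the distance-$\delta$ condition together with the standard bound at infinity ensures that $(z-\Lp_0)^{-1}$ is uniformly bounded on $\BB_1$.

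Taking $\BB_0$-norms in the identity, I would estimate the partial sum by iterating the growth bound \eqref{eq:lypert1} term by term, which gives a geometric series dominated by $C(M/\rad)^N\|u\|_{\BB_0}$. For the remainder, I would first apply \eqref{eq:lypert2} to $\Lp_0^N u$, then use the uniform boundedness of $(z-\Lp_0)^{-1}$ on $\BB_1$ and the continuous inclusion $\BB_1\hookrightarrow\BB_0$ to obtain
\[
\bigl\|z^{-N}(z-\Lp_0)^{-1}\Lp_0^N u\bigr\|_{\BB_0}\leq C(\gamma/\rad)^N\|u\|_{\BB_1}+C(M/\rad)^N\|u\|_{\BB_0}.
\]
Adding the two contributions yields
\[
\|(z-\Lp_0)^{-1}u\|_{\BB_0}\leq C(M/\rad)^N\|u\|_{\BB_0}+C(\gamma/\rad)^N\|u\|_{\BB_1},
\]
uniformly for $z\in V_{\delta,\rad}$ and for every integer $N\geq 0$.

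The final step is to optimize $N$: I would set it equal to the integer nearest to $\log(1/\tau)/\log(M/\gamma)$. Using the definition $\kappa=\log(\rad/\gamma)/\log(M/\gamma)$ together with the elementary identity $1-\kappa=\log(M/\rad)/\log(M/\gamma)$, a direct computation gives $(\gamma/\rad)^N\asymp\tau^{\kappa}$ and $(M/\rad)^N\asymp\tau^{\kappa-1}$. Rounding $N$ to an integer modifies each quantity by a bounded multiplicative factor that is absorbed into $C$, which produces the announced bound.

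The conceptual content of the lemma is that $\kappa$ is precisely the interpolation exponent between the contraction rate $\gamma$ enforced on the strong space $\BB_1$ and the expansion rate $M$ allowed on the weak space $\BB_0$, measured against the annulus radius $\rad$. The main technical subtlety is establishing the uniform boundedness of $(z-\Lp_0)^{-1}$ on $\BB_1$ over the possibly unbounded region $V_{\delta,\rad}$, together with careful bookkeeping in the geometric-series estimate to ensure no spurious logarithmic factor survives the optimization of $N$; both are standard after Keller--Liverani.
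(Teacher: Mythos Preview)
Your proof is correct and follows essentially the same approach as the paper: the same truncated resolvent identity, the same use of \eqref{eq:lypert1} for the partial sum and \eqref{eq:lypert2} together with the uniform $\BB_1$-bound on $(z-\Lp_0)^{-1}$ for the remainder, and the same choice of the truncation index $N\approx\log(1/\tau)/\log(M/\gamma)$. Your added remark on the uniform resolvent bound over the unbounded region $V_{\delta,\rad}$ is a point the paper leaves implicit.
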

\begin{proof}
This is essentially (11) in \cite{keller_liverani}. Let us
recall the proof for the convenience of the reader. We have
  \begin{equation}
  (z-\Lp_0)^{-1}=z^{-n} (z-\Lp_0)^{-1}\Lp_0^n +\frac{1}{z}\sum_{j=0}^{n-1}
  (z^{-1}\Lp_0)^j.
  \end{equation}
(this can be obtained for large enough $z$ by taking the series
expansion of $(z-\Lp_0)^{-1}$ and isolating the first terms).
Hence,
  \begin{align*}
  \norm{ (z-\Lp_0)^{-1} u }_{\BB_0}
  &\leq C|z|^{-n} \norm{(z-\Lp_0)^{-1}}_{\BB_1 \to \BB_1} \left[
  \gamma^n \norm{u}_{\BB_1} + M^n \norm{u}_{\BB_0} \right]
  \\&\ \ \ \
  + \frac{1}{|z|} \sum_{j=0}^{n-1} |z|^{-j} \norm{\Lp_0^j}_{\BB_0\to \BB_0}
\norm{u}_{\BB_0}
  \\&
  \leq C (\gamma/\rad)^n \norm{u}_{\BB_1} + C(M/\rad)^n \norm{u}_{\BB_0}.
  \end{align*}
Let us choose $n$ so that $(\gamma/\rad)^n =\tau^{\kappa}$,
i.e., $n=|\log \tau|/\log(M/\gamma)$. Then
  \begin{equation}
  (M/\rad)^n= \exp \left( |\log \tau| \cdot
\frac{\log(M/\rad)}{\log(M/\gamma)}\right)=\tau^{\kappa-1}.
  \qedhere
  \end{equation}
\end{proof}

\begin{proof}[Proof of Theorem \ref{thm_1}]
We have
  \begin{equation}
  (z-\Lp_t)^{-1}=(z-\Lp_0)^{-1}+ (z-\Lp_t)^{-1}(\Lp_t-\Lp_0)(z-\Lp_0)^{-1}.
  \end{equation}
If we want an expansion of $(z-\Lp_t)^{-1}$ up to order
$|t|^{\kappa b_0+b_1}$, this equation is sufficient: we can
replace on the right $(z-\Lp_t)^{-1}$ with $(z-\Lp_0)^{-1}$ up
to a small error $|t|^{\kappa b_0}$ (by \eqref{eq:klest}), and
use the Taylor expansion of $\Lp_t-\Lp_0$ to conclude (since
$\Lp_t-\Lp_0=O_{\BB_2 \to \BB_1}(|t|^{b_1})$, the global error
is of order $|t|^{\kappa b_0+b_1}$). If we want a better
precision $|t|^{\kappa b_0+b(1,N)}$, we should iterate the
previous equation, so that in the end $(z-\Lp_t)^{-1}$ is
multiplied by a term of order $|t|^{b(1,N)}$.

This is done as follows. Let
$A(z,t):=(\Lp_t-\Lp_0)(z-\Lp_0)^{-1}$. Iterating the previous
equation $N-1$ times, it follows that
\begin{equation}\label{eq:itera}
\begin{split}
  (z-\Lp_t)^{-1}&=\sum_{j=0}^{N-2}(z-\Lp_0)^{-1}
  A(z,t)^j+(z-\Lp_t)^{-1} A(z,t)^{N-1}\\
  &=\sum_{j=0}^{N-1}(z-\Lp_0)^{-1}
  A(z,t)^j+\left[(z-\Lp_t)^{-1}-(z-\Lp_0)^{-1}\right]A(z,t)^{N-1}.
\end{split}
\end{equation}
For each $j$, we then need to expand $A(z,t)^j$ to isolate the
good Taylor expansion, and negligible terms. The computation is
quite straightforward, but the notations are awful. To simplify
them, let us denote by $\uell$ a tuple $(\ell_1,\dots,\ell_k)$
of positive integers. Write also $l(\uell)=k$ and
$|\uell|=\ell_1+\dots+\ell_k$ and $\tilde
Q_\uell=Q_{\ell_1}(z-\Lp_0)^{-1}\dots Q_{\ell_k}
(z-\Lp_0)^{-1}$, and $\tilde \Delta_i(t)=\Delta_i(t)
(z-\Lp_0)^{-1}$.

Let us prove that, for any $j< N$,
  \begin{multline}\label{eqsldkfjmlqs}
  A(z,t)^j=\sum_{l(\uell)<j,\ j-l(\uell)<N-|\uell|}
  t^{|\uell|}  A(z,t)^{j-l(\uell)-1}\tilde
  \Delta_{N-|\uell|-(j-l(\uell)-1)}(t)\tilde Q_{\uell}
  \\
  +\sum_{l(\uell)=j,\ 0<N-|\uell|}t^{|\uell|}\tilde Q_{\uell}.
  \end{multline}

We start from the following equality, valid for each $j\in \N$
and $a\leq N$, which is a direct consequence of the definition
of $\Delta_a(t)$:
  \begin{equation}\label{eq:iteone}
  A(z,t)^j=A(z,t)^{j-1}\tilde \Delta_{a}(t)+\sum_{\ell=1}^{a-1}t^\ell
  A(z,t)^{j-1}\tilde Q_\ell.
  \end{equation}
We can again iterate this equation. We will adjust the
parameter $a$ we will use during this iteration, as follows: we
claim that, for all $1\leq m\leq j$,
  \begin{multline}\label{eq:iterm}
  A(z,t)^j=\sum_{l(\uell)<m,\ j-l(\uell)<N-|\uell|}
  t^{|\uell|} A(z,t)^{j-l(\uell)-1}\tilde
  \Delta_{N-|\uell|-(j-l(\uell)-1)}(t)\tilde Q_{\uell}\\
  +\sum_{l(\uell)=m,\ j-l(\uell)<N-|\uell|} t^{|\uell|}A(z,t)^{j-m}\tilde
Q_{\uell}\;.
  \end{multline}

In fact, for $m=1$ the above formula is just \eqref{eq:iteone}
for $a=N-j+1$. Next, suppose \eqref{eq:iterm} true for some
$m<j$, then the formula for $m+1$ follows by substituting the
last terms $A(z,t)^{j-m}$ using \eqref{eq:iteone} for
$a=N-|\uell|-(j-l(\uell)-1)$. This proves \eqref{eq:iterm} for
any $m\leq j$. In particular, for $m=j$, we obtain
\eqref{eqsldkfjmlqs}.

The equations \eqref{eq:itera} and \eqref{eqsldkfjmlqs} sum up
to
  \begin{multline}
  \label{eqRst}
  (z-\Lp_t)^{-1}=R_N(t) +\left[(z-\Lp_t)^{-1}-(z-\Lp_0)^{-1}\right]A(z,t)^{N-1}
  \\+
  \sum_{j=0}^{N-1} \sum_{l(\uell)<j,\ j-l(\uell)<N-|\uell|}
  t^{|\uell|}  (z-\Lp_0)^{-1}A(z,t)^{j-l(\uell)-1}\tilde
  \Delta_{N-|\uell|-(j-l(\uell)-1)}(t)\tilde Q_{\uell}.
  \end{multline}
We will show that all the error terms are $O_{\BB_N \to
\BB_0}(|t|^{\kappa b_0+b(1,N)})$.

Fix $j$ and $\uell$ with $l(\uell)<j,\ j-l(\uell)<N-|\uell|$.
Let
  \begin{equation}
  F(t)=t^{|\uell|} A(z,t)^{j-l(\uell)-1}\tilde
  \Delta_{N-|\uell|-(j-l(\uell)-1)}(t)\tilde Q_{\uell},
  \end{equation}
we want to show that
  \begin{equation}
  \label{eq_toproveF}
  \norm{(z-\Lp_0)^{-1} F(t)}_{\BB_N \to \BB_0} \leq C |t|^{\kappa b_0+b(1,N)}.
  \end{equation}
We have $\norm{|t|^{|\uell|} \tilde Q_{\uell}}_{\BB_N \to
\BB_{N-|\uell|}} \leq C |t|^{|\uell|} \leq C |t|^{
b(N-|\uell|,N)}$ by~\eqref{eq:Qbound}, while
  \begin{equation}
  \norm{\tilde
  \Delta_{N-|\uell|-(j-l(\uell)-1)}(t)}_{\BB_{N-|\uell|} \to
  \BB_{j-l(\uell)-1}} \leq C |t|^{b(j-l(\uell)-1, N-|\uell|)}
  \end{equation}
by~\eqref{eq:Cksmooth}, and
$\norm{A(z,t)^{j-l(\uell)-1}}_{\BB_{j-l(\uell)-1} \to \BB_0}
\leq C |t|^{b(0, j-l(\uell)-1)}$ again by~\eqref{eq:Cksmooth}
applied $j-l(\uell)-1$ times, since $A(z,t)=\tilde\Delta_1(t)$.
Multiplying these estimates gives
  \begin{equation}
  \label{eq_FtoB0}
  \norm{ F(t)}_{\BB_N \to \BB_0} \leq C |t|^{b(0,N)}.
  \end{equation}

Moreover, since $\tilde \Delta_k=\tilde\Delta_{k-1}-t^k \tilde
Q_k$, the norm of $\tilde \Delta_k$ from $\BB_j$ to
$\BB_{j-k+1}$ is bounded by $C|t|^{b(j-k+1,j)}$. In particular,
the norm of $\tilde \Delta_{N-|\uell|-(j-l(\uell)-1)}(t)$ from
$\BB_{N-|\uell|}$ to $\BB_{j-l(\uell)}$ is bounded by $C
|t|^{b(j-l(\uell), N-|\uell|)}$. Together with the same
arguments as above, we obtain
  \begin{equation}
  \label{eq_FtoB1}
  \norm{ F(t)}_{\BB_N \to \BB_1} \leq C |t|^{b(1,N)}.
  \end{equation}
The estimate \eqref{eq_toproveF} now follows from
\eqref{eq_FtoB0} and \eqref{eq_FtoB1}, as well as Lemma
\ref{lemma} for $\tau=|t|^{b_0}$.

We now turn to the term
  \begin{equation}
  \label{eqlastterm}
  \left[(z-\Lp_t)^{-1}-(z-\Lp_0)^{-1}\right]A(z,t)^{N-1}
  \end{equation}
of~\eqref{eqRst}. As $\norm{A(z,t)}_{B_i \to
\BB_{i-1}}=O(|t|^{b_{i-1}})$, we have
$\norm{A(z,t)^{N-1}}_{\BB_N\to \BB_1}=O(|t|^{b(1,N)})$. With
\eqref{eq:klest}, this shows that \eqref{eqlastterm} is
$O_{\BB_N \to \BB_0}(|t|^{\kappa b_0+b(1,N)})$, concluding the
proof.
\end{proof}

We will use the previous theorem in the following form:
\begin{cor}
\label{thm:kl2}
Under the assumptions of the previous theorem, assume also that
$M=1$ and that $\Lp_0$ acting on each space $\BB_j$ has a
simple isolated eigenvalue at $1$, with corresponding
eigenfunction $\xi_0$. Then, for small enough $t$, $\Lp_t$ has
a unique simple isolated eigenvalue $\lambda(t)$ close to $1$.

Let $\nu$ be a continuous linear form on $\BB_0$ with
$\nu(\xi_0)=1$. For small enough $t$, $\nu$ does not vanish on
the eigenfunction of $\Lp_t$ for the eigenvalue $\lambda(t)$.
It is therefore possible to define a normalized eigenfunction
$\xi_t$ satisfying $\nu(\xi_t)=1$.

Finally, there exist $u_1\in \BB_{N-1},\dots,u_{N-1} \in \BB_1$
such that, for any $\epsilon>0$,
  \begin{equation}
  \norm{\xi_t -\xi_0 -\sum_{k=1}^{N-1} t^k u_k }_{\BB_0} =O(
  |t|^{b(0,N)-\epsilon}).
  \end{equation}
\end{cor}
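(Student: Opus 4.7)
The plan is to construct $\lambda(t)$ and the eigenfunction via the Riesz spectral projector associated to $\Lp_t$ and a small contour around $1$, and then extract the expansion of $\xi_t$ from the resolvent expansion given by Theorem~\ref{thm_1}. Since $1$ is an isolated simple eigenvalue of $\Lp_0$ on each $\BB_j$ and the essential spectral radius on $\BB_1$ is at most $\gamma<1$ by~\eqref{eq:lypert2}, I can fix a small positively oriented circle $\Gamma$ of radius $r_0$ around $1$ contained in $V_{\delta,\rad}$ for some $\rad<1$ (specifically, $\rad=1-r_0$) and some $\delta>0$. For $t$ small enough, Theorem~\ref{thm_1} ensures that $(z-\Lp_t)^{-1}$ stays uniformly close to $(z-\Lp_0)^{-1}$ on $\Gamma$, so the projector
\begin{equation*}
P_t=\frac{1}{2\pi\ic}\oint_\Gamma (z-\Lp_t)^{-1}\dd z
\end{equation*}
is well-defined as an operator $\BB_N\to\BB_0$, has one-dimensional range for small $t$ (since it converges to $P_0$), and its range is spanned by an eigenfunction of $\Lp_t$ for a unique simple eigenvalue $\lambda(t)$ near $1$, obtainable as $\lambda(t)=\nu(\Lp_t\xi_t)$ once $\xi_t$ is normalized. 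Since $\nu(P_0\xi_0)=\nu(\xi_0)=1\neq 0$, continuity of $\nu$ on $\BB_0$ guarantees $\nu(P_t\xi_0)\neq 0$ for small $t$, making the normalization $\nu(\xi_t)=1$ legitimate.

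Next, I would expand $P_t$ by integrating the identity $(z-\Lp_t)^{-1}=R_N(t)+O_{\BB_N\to\BB_0}(|t|^{\kappa b_0+b(1,N)})$ term by term. Grouping $R_N(t)=\sum_{k=0}^{N-1}t^k R_N^{(k)}(z)$ where $R_N^{(k)}(z)$ is the sum of products $(z-\Lp_0)^{-1}Q_{\ell_1}\cdots Q_{\ell_j}(z-\Lp_0)^{-1}$ with $\ell_1+\cdots+\ell_j=k$, I set $\Pi_k\coloneqq\frac{1}{2\pi\ic}\oint_\Gamma R_N^{(k)}(z)\dd z$. Applying the resulting expansion to $\xi_0$, which lies in $\BB_N$ by uniqueness of the simple eigenfunction across the nested spaces, yields
\begin{equation*}
P_t\xi_0=\sum_{k=0}^{N-1}t^k \Pi_k\xi_0+O_{\BB_0}\bigl(|t|^{\kappa b_0+b(1,N)}\bigr),
\end{equation*}
with $\Pi_k\xi_0\in\BB_{N-k}$ by tracking which operators map where (each $Q_{\ell_i}$ drops $\ell_i$ levels while $(z-\Lp_0)^{-1}$ preserves every $\BB_j$). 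Since $\kappa=\log(\rad/\gamma)/\log(1/\gamma)$ tends to $1$ as $\rad\uparrow 1$, shrinking $r_0$ makes $\kappa b_0$ arbitrarily close to $b_0$, so the error becomes $O(|t|^{b(0,N)-\epsilon})$ for any prescribed $\epsilon>0$.

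Finally, the normalized eigenfunction $\xi_t=P_t\xi_0/\nu(P_t\xi_0)$ inherits such an expansion: the scalar $\nu(P_t\xi_0)$ has its own Taylor expansion starting from $1$ with the same remainder, so dividing the two expansions and regrouping produces
\begin{equation*}
\Bigl\|\xi_t-\xi_0-\sum_{k=1}^{N-1}t^k u_k\Bigr\|_{\BB_0}=O\bigl(|t|^{b(0,N)-\epsilon}\bigr),
\end{equation*}
where the coefficients $u_k$ are linear combinations of $\Pi_j\xi_0$ for $j\leq k$ and therefore still lie in $\BB_{N-k}$. The main non-routine part of the argument, and the main obstacle, is the interplay between $\rad$ and $\epsilon$: one must exploit the freedom to shrink $\Gamma$ to push Theorem~\ref{thm_1}'s error $|t|^{\kappa b_0+b(1,N)}$ arbitrarily close to the target $|t|^{b(0,N)}$, and simultaneously keep careful bookkeeping to verify that each $\Pi_k\xi_0$ — a contour integral of a product of resolvents and differential operators — indeed lands in $\BB_{N-k}$ rather than merely $\BB_0$.
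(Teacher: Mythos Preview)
Your proposal is correct and follows essentially the same route as the paper's proof: define the Riesz projector $P_t$ via a small contour around $1$, apply it to a fixed vector in $\BB_N$ (you choose $\xi_0$, the paper takes an arbitrary $u\in\BB_N$ with $P_0 u\neq 0$), extract the Taylor coefficients of $P_t\xi_0$ by integrating $R_N(t)$ termwise, normalize, and finally shrink the contour to push $\kappa$ toward $1$ and absorb the loss into the $\epsilon$. The only point you leave slightly implicit---that shrinking the contour does not change $P_t$ or the coefficients $\Pi_k\xi_0$, by holomorphy of the integrands between the two circles---is exactly what the paper invokes in its last paragraph, so there is no real gap.
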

\begin{proof}
Let $c>0$ be small, and define an operator
$P_t=\frac{1}{2\ic\pi} \int_{|z-1|=c} (z-A_t)^{-1} \dd z$. The
operator $P_0$ is the spectral projection corresponding to the
eigenvalue $1$ of $P_0$. By Theorem~\ref{thm_1},
$\norm{P_t-P_0}_{\BB_N \to \BB_0}$ converges to $0$ when $t\to
0$. Therefore, the operator $P_t$ is also a rank one projection
for small enough $t$, corresponding to an eigenvalue
$\lambda(t)$ of $A_t$. Let $\tilde \xi_t=P_t(u)$ for some fixed
$u\in \BB_N$ with $P_0(u)\not=0$, $\tilde\xi_t$ is an
eigenfunction of $A_t$ for the eigenvalue $\lambda(t)$. Since
$\norm{\tilde \xi_t-\tilde\xi_0}_{\BB_0} \to 0$, this
eigenfunction satisfies $\nu(\tilde\xi_t)\not=0$ for small
enough $t$, and we can define a normalized eigenfunction
$\xi_t=\tilde \xi_t/\nu(\tilde \xi_t)$.

For $1\leq k\leq N-1$, let
  \begin{equation*}
  \tilde u_k=\sum_{\ell_1+\dots+\ell_j=k}\frac{1}{2\ic\pi}\int_{|z-1|=c}
  (z-A_0)^{-1} Q_{\ell_1}\dots (z-A_0)^{-1} Q_{\ell_j}
  (z-A_0)^{-1}u \dd z.
  \end{equation*}
It belongs to $\BB_{N-k}$ by \eqref{eq:Qbound}. Moreover,
Theorem~\ref{thm_1} yields
  \begin{equation}
  \label{eq_exptilde}
  \norm{\tilde\xi_t -\tilde \xi_0 -\sum_{k=1}^{N-1} t^k \tilde u_k}_{\BB_0}
  \leq C|t|^{\kappa b_0+b(1,N)},
  \end{equation}
for $\kappa=\log( (1-c)/\gamma)/\log(1/\gamma)$. Applying $\nu$
to this equation, we obtain that $\nu(\tilde \xi_t)$ admits an
expansion $\nu(\tilde\xi_t)=\sum_{k=0}^{N-1} t^k \nu_k+O(
|t|^{\kappa b_0+b(1,N)})$. Hence, $\xi_t=\tilde
\xi_t/\nu(\tilde \xi_t)$ also admits an expansion similar to
\eqref{eq_exptilde}.

This is almost the conclusion of the proof, we only have to see
that the error term $O( |t|^{\kappa b_0+b(1,N)})$ can be
modified to be of the form $O(|t|^{b(0,N)-\epsilon})$ for any
$\epsilon>0$. This follows from the fact that $c$ can be chosen
arbitrarily small (by holomorphy, this does not change the
projection $P_t$ for small enough $t$, hence $\tilde u_k$ and
$u_k$ are also not modified).
\end{proof}

\begin{rmk}
Corollary \ref{thm:kl2} states that the normalized
eigenfunction $\xi_t$ has a Taylor expansion of order
$b(0,N)-\epsilon$ at $0$. Under similar assumptions at every
point of a neighborhood $I$ of $0$, we obtain that $\xi_t$ has
a Taylor expansion at every point of $I$. By a lemma of
Campanato \cite{campanato}, this implies that $\xi_t$ is
$C^{b(0,N)-\epsilon}$ on $I$, a result analogous to
\cite{herve_pene}.
\end{rmk}

\subsection{Definition of good Banach spaces}
We now turn back to the dynamical setting: $T:X\to X$ is a
mixing Gibbs-Markov map, and $f:X\to \R$ is a function
satisfying $\sum m(a)Df(a)^\eta<\infty$, for which we want to
prove an accurate characteristic expansion. To do that, we wish
to apply Corollary~\ref{thm:kl2} to a carefully chosen sequence
of Banach space. We have currently at our disposal the spaces
$L^p$ (but the spectral properties of the transfer operator on
these spaces are not good), and the space $\boL$ (which is only
a space, not a sequence of spaces). Our goal in this paragraph
is to define a family of intermediate spaces between $L^p$ and
$\boL$, which will be more suitable to apply
Corollary~\ref{thm:kl2}.

For $1\leq p\leq \infty$ and $\reg>0$, let us define a Banach
space $\boL^{p,\reg}$ as follows: it is the space of measurable
functions $u$ such that, for any $k\in \N$, there exists a
decomposition $u=v+w$ with $\norm{v}_{\boL} \leq C e^k$ and
$\norm{w}_{L^p} \leq C e^{-\reg k}$. The best such $C$ is by
definition the norm of $u$ in $\boL^{p,\reg}$. This Banach
space is an \emph{interpolation space} between $\boL$ and $L^p$
(see \cite{bergh_lofstrom_interpolation}).

Of course, $\boL^{p,\reg}$ is included in $L^p$ (simply use the
decomposition for $k=0$), and $\boL^{p,\reg}$ is contained in
$\boL^{p',\reg'}$ when $p'\leq p$ and $\reg'\leq \reg$.

Let us check that the operators $\Transf$ and $\Transf_t$ enjoy
good spectral properties when acting on $\boL^{p,\reg}$. This
will be a consequence of the fact that they have good
properties when acting on $\boL$, and are contractions when
acting on $L^p$.

\begin{lem}
\label{lemly1} Let $1\leq p\leq \infty$ and let $\reg>0$. The
operator $\Transf$ acts continuously on the space
$\boL^{p,\reg}$. Moreover, there exist $\gamma<1$ and $C>0$
such that
  \begin{equation}
  \label{eq_LY_Bpa}
  \norm{ \Transf^n u}_{\boL^{p,\reg}} \leq C \gamma^n \norm{u} _{\boL^{p,\reg}}
  +C \norm{u}_{L^1}.
  \end{equation}
\end{lem}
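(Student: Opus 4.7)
The plan is to combine two standard properties of the transfer operator $\Transf$: on $\boL$ it satisfies the Lasota-Yorke inequality \eqref{eqLY}, while on every $L^p$ with $1\le p\le\infty$ it is a contraction. The $L^p$ contraction comes from the invariance of $m$: this implies $\Transf 1=1$, so that the formula \eqref{defLp} exhibits $(\Transf u)(x)$ as a convex combination of the values of $u$ at the preimages of $x$. Jensen's inequality then yields $|\Transf u|^p\le \Transf(|u|^p)$ pointwise, and integrating gives $\|\Transf u\|_{L^p}\le\|u\|_{L^p}$. Since the $\boL^{p,\reg}$-norm is defined as an infimum over decompositions $u=v+w$, to estimate $\Transf^n u$ in $\boL^{p,\reg}$ it will be enough to apply $\Transf^n$ separately to the two pieces of a cleverly chosen decomposition of $u$.

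For \eqref{eq_LY_Bpa}, fix $k,n\in\N$ and decompose $u$ at the \emph{shifted} level $k+\alpha n$, where $\alpha>0$ will be pinned down in a moment: choose $u=v+w$ with $\|v\|_\boL\le e^{k+\alpha n}\|u\|_{\boL^{p,\reg}}$ and $\|w\|_{L^p}\le e^{-\reg(k+\alpha n)}\|u\|_{\boL^{p,\reg}}$. The $L^p$ piece is easy: $\|\Transf^n w\|_{L^p}\le \|w\|_{L^p}\le e^{-\reg\alpha n}\,e^{-\reg k}\|u\|_{\boL^{p,\reg}}$. For the $\boL$ piece, the Lasota-Yorke inequality \eqref{eqLY} together with $\|v\|_{L^1}\le \|u\|_{L^1}+\|w\|_{L^1}\le \|u\|_{L^1}+\|w\|_{L^p}$ (the last step using that $m$ is a probability measure) gives
\[
\|\Transf^n v\|_\boL\le C(\gamma e^\alpha)^n\,e^k\|u\|_{\boL^{p,\reg}}+C\|u\|_{L^1}+C\,e^{-\reg(k+\alpha n)}\|u\|_{\boL^{p,\reg}}.
\]

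The main obstacle, and the place where the choice of $\alpha$ is forced, is the tension between the growth factor $(\gamma e^\alpha)^n$ produced by the Lasota-Yorke inequality and the decay factor $e^{-\reg\alpha n}$ produced by the bound on $\Transf^n w$: a large $\alpha$ spoils the first, a small $\alpha$ spoils the second. The balanced choice $\alpha=|\log\gamma|/(1+\reg)$ makes $\gamma e^\alpha=e^{-\reg\alpha}=:\gamma'<1$, and both problematic terms are then controlled by $C(\gamma')^n$ times $e^k$ or $e^{-\reg k}$, as appropriate. Hence the decomposition $\Transf^n u=\Transf^n v+\Transf^n w$ satisfies the bounds defining the $\boL^{p,\reg}$-norm at level $k$ with constant $M=C(\gamma')^n\|u\|_{\boL^{p,\reg}}+C\|u\|_{L^1}$, which is \eqref{eq_LY_Bpa} with $\gamma'$ in place of $\gamma$. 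Continuity of $\Transf$ on $\boL^{p,\reg}$ then follows immediately by specializing to $n=1$ and absorbing $\|u\|_{L^1}$ into $\|u\|_{\boL^{p,\reg}}$ (which is legitimate, since the decomposition at level $k=0$ gives $\|u\|_{L^1}\le \|v_0\|_\infty+\|w_0\|_{L^p}\le C\|u\|_{\boL^{p,\reg}}$ via $\|v_0\|_\infty\le C\|v_0\|_\boL$).
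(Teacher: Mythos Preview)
Your proof is correct and follows essentially the same approach as the paper: decompose $u$ at a shifted level $k+A$ (with $A\approx \alpha n$), apply the Lasota--Yorke inequality \eqref{eqLY} to the $\boL$ piece and the $L^p$ contraction to the other piece, then choose the shift so that both $\gamma e^\alpha$ and $e^{-\reg\alpha}$ are $<1$. The paper merely picks any $\epsilon$ with $\gamma_0 e^{\epsilon}<1$, whereas you optimize with $\alpha=|\log\gamma|/(1+\reg)$, and you are more explicit about the $L^p$ contraction via Jensen; the only cosmetic point is that the definition of $\boL^{p,\reg}$ uses integer levels $k\in\N$, so strictly speaking one should replace $\alpha n$ by its integer part (as the paper does), which changes nothing.
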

\begin{proof}
Let $\gamma_0<1$ be such that $\norm{\Transf^n u}_{\boL} \leq C
\gamma_0^n \norm{u}_{\boL} + C\norm{u}_{L^1}$.

For $n\in \N$, let $A$ be the integer part of $\epsilon n$, for
some $\epsilon>0$ with $\gamma_0 e^{-\epsilon}<1$. Let $u\in
\boL^{p,\reg}$, there exists a decomposition $u=v+w$ with
$\norm{v}_{\boL}\leq e^{k+A}\norm{u}_{\boL^{p,\reg}}$ and
$\norm{w}_{L^p} \leq e^{-\reg(k+A)}\norm{u}_{\boL^{p,\reg}}$.
Then
  \begin{align*}
  \norm{\Transf^n v}_{\boL} & \leq C \gamma_0^n e^{k+A}\norm{u}_{\boL^{p,\reg}} + C
  \norm{v}_{L^1}
  \\&
  \leq C (\gamma_0^n e^A) e^k \norm{u}_{\boL^{p,\reg}} + C
\norm{w}_{L^1}+C \norm{u}_{L^1}
  \\&\leq C (\gamma_0^n e^A + e^{-\reg(k+A)}) e^k \norm{u}_{\boL^{p,\reg}} + C
\norm{u}_{L^1}
  \\& \leq e^k (C \gamma^n\norm{u}_{\boL^{p,\reg}}+C \norm{u}_{L^1}),
  \end{align*}
for some $\gamma<1$. Moreover
  \begin{align*}
  \norm{\Transf^n w}_{L^p}&\leq \norm{w}_{L^p} \leq C e^{-\reg A}
  e^{-\reg k}\norm{u}_{\boL^{p,\reg}}
  \leq e^{-\reg k} (C \gamma^n\norm{u}_{\boL^{p,\reg}})
  \\&\leq e^{-\reg k} (C \gamma^n\norm{u}_{\boL^{p,\reg}}+C \norm{u}_{L^1})
  \end{align*}
for some $\gamma<1$.

Therefore, the decomposition of $\Transf^n u$ as $\Transf^n
v+\Transf^n w$ shows that $\Transf^n u$ belongs to
$\boL^{p,\reg}$, and has a norm bounded by $C
\gamma^n\norm{u}_{\boL^{p,\reg}}+C \norm{u}_{L^1}$.
\end{proof}

\begin{lem}
For any $p\geq 1$ and $\reg>0$, the inclusion of
$\boL^{p,\reg}$ in $L^1$ is compact.
\end{lem}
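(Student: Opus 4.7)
The plan is to show that any sequence $(u_n)\subset\boL^{p,\reg}$ with $\norm{u_n}_{\boL^{p,\reg}}\leq 1$ admits an $L^1$-Cauchy subsequence. By definition of the interpolation norm, for every $k\in\N$ one can select a decomposition $u_n = v_{n,k}+w_{n,k}$ with $\norm{v_{n,k}}_{\boL}\leq 2e^k$ and $\norm{w_{n,k}}_{L^p}\leq 2e^{-\reg k}$. Since $m$ is a probability measure and $p\geq 1$, Jensen's inequality gives $\norm{w_{n,k}}_{L^1}\leq \norm{w_{n,k}}_{L^p}\leq 2e^{-\reg k}$, so the "remainder" piece $w_{n,k}$ tends to $0$ in $L^1$ uniformly in $n$ as $k\to\infty$. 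The strategy is then classical for interpolation spaces: trade off smoothness of $v_{n,k}$ against smallness of $w_{n,k}$.

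The crucial auxiliary fact is that the inclusion $\boL\hookrightarrow L^1$ is compact. This is the main obstacle and the only non-formal step of the argument; it is a standard feature of the Gibbs-Markov setting, already implicit in the Hennion-type derivation of the quasi-compactness of $\Transf$ on $\boL$ that underlies \eqref{eqLY}. The idea is that, since $\alpha$ is countable with $\sum_{a\in\alpha}m(a)=1$, given $\delta>0$ one can choose a finite $\alpha_0\subset\alpha$ with $m(X\setminus\bigcup\alpha_0)<\delta$; on $\bigcup\alpha_0$ a bounded subset of $\boL$ is uniformly bounded and uniformly H\"older on each piece, which by an Arzel\`a--Ascoli / total-boundedness argument gives precompactness in $L^1(\bigcup\alpha_0)$, while on the complement the contribution is controlled by the uniform sup-norm times $\delta$; letting $\delta\to 0$ yields $L^1$-precompactness of the full ball.

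Granting this compactness, for each fixed $k$ the family $(v_{n,k})_n$ lies in a ball of radius $2e^k$ in $\boL$ and hence is relatively compact in $L^1$. A standard diagonal extraction over $k\in\N$ produces a single subsequence $(u_{n_j})_j$ along which $(v_{n_j,k})_j$ is $L^1$-Cauchy for every $k$. For any $j,j'$ and any $k$,
\begin{equation*}
\norm{u_{n_j}-u_{n_{j'}}}_{L^1}\leq \norm{v_{n_j,k}-v_{n_{j'},k}}_{L^1}+\norm{w_{n_j,k}}_{L^1}+\norm{w_{n_{j'},k}}_{L^1}\leq \norm{v_{n_j,k}-v_{n_{j'},k}}_{L^1}+4e^{-\reg k}.
\end{equation*}
Given $\epsilon>0$, one first fixes $k$ with $4e^{-\reg k}<\epsilon/2$, then invokes the $L^1$-Cauchy property of $(v_{n_j,k})_j$ for this particular $k$ to ensure the first term is $<\epsilon/2$ for all large $j,j'$. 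Thus $(u_{n_j})$ is Cauchy in $L^1$, which proves the desired compactness of $\boL^{p,\reg}\hookrightarrow L^1$.
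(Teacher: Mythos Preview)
Your proof is correct and follows essentially the same approach as the paper's: decompose $u_n=v_{n,k}+w_{n,k}$ using the interpolation norm, invoke compactness of $\boL\hookrightarrow L^1$ to extract a subsequence along which the $v$-parts converge, and run a diagonal argument over $k$ to get an $L^1$-Cauchy subsequence of $(u_n)$. The only difference is that you include a sketch of why $\boL\hookrightarrow L^1$ is compact, which the paper simply asserts as known.
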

\begin{proof}
Let $u_n$ be a sequence bounded by $1$ in $\boL^{p,\reg}$. Fix
$k\in \N$, and let us decompose $u_n$ as $v_n+w_n$ with
$\norm{v_n}_{\boL} \leq e^k$ and $\norm{w_n}_{L^p}\leq e^{-\reg
k}$. Since the inclusion of $\boL$ in $L^1$ is compact, there
exists a subsequence $j(n)$ such that $v_{j(n)}$ converges in
$L^1$. Therefore, $\limsup_{n,m\to \infty}
\norm{u_{j(n)}-u_{j(m)}}_{L^1} \leq 2 e^{-\reg k}$. With a
diagonal argument over $k$, we finally obtain a convergent
subsequence of $u_n$.
\end{proof}

\begin{cor}
The transfer operator $\Transf$ acting on $\boL^{p,s}$ has a
simple eigenvalue at $1$, and the rest of its spectrum is
contained in a disk of radius $<1$.
\end{cor}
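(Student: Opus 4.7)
The plan is to combine the Lasota--Yorke inequality of Lemma~\ref{lemly1} with the compact embedding $\boL^{p,\reg}\hookrightarrow L^1$ established just above to obtain quasi-compactness, and then to identify the peripheral spectrum by interpolating between the already-known spectral gap on $\boL$ and the action on $L^1$.

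First I would invoke Hennion's theorem (the Ionescu-Tulcea--Marinescu--Doeblin--Fortet--Hennion argument): the inequality $\norm{\Transf^n u}_{\boL^{p,\reg}}\leq C\gamma^n\norm{u}_{\boL^{p,\reg}}+C\norm{u}_{L^1}$ together with the compact inclusion into $L^1$ yields that $\Transf$ is quasi-compact on $\boL^{p,\reg}$, with essential spectral radius at most $\gamma<1$. Hence outside the disk of radius $\gamma$ the spectrum consists of finitely many eigenvalues of finite multiplicity. Since the constant function $1$ belongs to $\boL\subset\boL^{p,\reg}$ (via the trivial decomposition $1=1+0$) and is fixed by $\Transf$, the value $1$ is one of these eigenvalues.

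The main step is to show that $1$ is the only peripheral eigenvalue and that it is simple. The spectral gap of $\Transf$ on $\boL$ recalled above gives $\norm{\Transf^n v-(\int v)\cdot 1}_{L^1}\leq C\rho^n\norm{v}_{\boL}$ for some $\rho<1$ and all $v\in\boL$. For $u\in\boL^{p,\reg}$, use the defining decomposition $u=v_k+w_k$ with $\norm{v_k}_{\boL}\leq e^k\norm{u}_{\boL^{p,\reg}}$ and $\norm{w_k}_{L^p}\leq e^{-\reg k}\norm{u}_{\boL^{p,\reg}}$; since $m$ is a probability, $\norm{w_k}_{L^1}\leq\norm{w_k}_{L^p}$, so that
\begin{equation*}
\left\|\Transf^n u-\Bigl(\int u\Bigr)\cdot 1\right\|_{L^1}\leq \bigl(C\rho^n e^k+2e^{-\reg k}\bigr)\norm{u}_{\boL^{p,\reg}}.
\end{equation*}
Choosing $k$ proportional to $n$ so that $\rho^n e^k\asymp e^{-\reg k}$ yields exponential decay of $\Transf^n u-(\int u)\cdot 1$ in $L^1$ for every $u\in\boL^{p,\reg}$.

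Applied to a peripheral eigenfunction $\Transf u=\lambda u$ with $|\lambda|=1$, this forces $\lambda^n u\to(\int u)\cdot 1$ in $L^1$. Since the $L^1$ norm of $\lambda^n u$ is constant in $n$, either $\lambda\neq 1$ and then we must have $\int u=0$, followed by $\norm{u}_{L^1}=0$ i.e.\ $u=0$, or else $\lambda=1$ and $u=(\int u)\cdot 1$, so the eigenspace at $1$ is one-dimensional. This rules out any peripheral eigenvalue other than the simple one at $1$, and combined with the essential spectral bound completes the proof. I expect the interpolation balancing of $e^k$ against $e^{-\reg k}$ to be the one delicate ingredient; the quasi-compactness is a direct application of Hennion, and eliminating unwanted peripheral eigenvalues is then a standard $L^1$ ergodicity argument once the $L^1$ decay to the mean is in hand.
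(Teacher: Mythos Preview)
Your proposal is correct, and the quasi-compactness step via Hennion is exactly what the paper does. The identification of the peripheral spectrum, however, follows a genuinely different route.

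The paper argues by bootstrapping regularity: any peripheral eigenfunction $u$ lies in $L^1$, and since $\Transf$ satisfies the Lasota--Yorke inequality~\eqref{eqLY} on $\boL$, the Ionescu-Tulcea--Marinescu theorem forces $u\in\boL$; the already-known spectral picture on $\boL$ then finishes the argument in one line. Your approach instead proves directly, via the interpolation decomposition, that $\norm{\Transf^n u-(\int u)\cdot 1}_{L^1}$ decays exponentially for every $u\in\boL^{p,\reg}$, and reads off the peripheral spectrum from this convergence. Your route is more self-contained (no appeal to the ITM bootstrap) and gives a quantitative $L^1$ rate as a byproduct; the paper's route is shorter and transfers the full algebraic simplicity from $\boL$ without further work. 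One small point worth making explicit in your write-up: the $L^1$ decay you establish also rules out a nontrivial Jordan block at $1$, since a generalized eigenvector $v$ with $(\Transf-I)v$ a nonzero constant would give $\Transf^n v=v+n\cdot\text{const}$, whose $L^1$ norm is unbounded; this is needed for the eigenvalue to be \emph{simple} rather than merely geometrically simple.
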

\begin{proof}
Together with Hennion's Theorem \cite{hennion}, the two
previous lemmas ensure that the essential spectral radius of
$\Transf$ acting on $\boL^{p,s}$ is $\leq \gamma<1$, i.e., the
elements of the spectrum of $\Transf$ with modulus $>\gamma$
are isolated eigenvalues of finite multiplicity.

If $u$ is an eigenfunction of $\Transf$ for an eigenvalue of
modulus $1$, then $u$ belongs to $L^1$. Since $\Transf$
satisfies a Lasota-Yorke inequality \eqref{eqLY} on the space
$\boL$, the theorem of Ionescu-Tulcea and Marinescu
\cite{ionescu_tulcea_marinescu} implies that $u$ belongs to
$\boL$. However, we know that $\Transf$ acting on $\boL$ has a
simple eigenvalue at $1$, and no other eigenvalue of modulus
$1$.
\end{proof}

\begin{lem}
\label{lemly2} For any $p\geq 1$ and $\reg>0$, the operator
$\Transf_t$ acts continuously on $\boL^{p,\reg}$ for small
enough $t$. Moreover, $\norm{\Transf_t
-\Transf}_{\boL^{p,\reg}\to\boL^{p,\reg}}$ converges to $0$
when $t\to 0$. Finally, if $t$ is small enough, $\Transf_t$
satisfies a Lasota-Yorke type inequality
  \begin{equation}
  \label{LYt}
  \norm{\Transf_t^n u}_{\boL^{p,\reg}} \leq C\gamma^n \norm{u}_{\boL^{p,\reg}} + C \norm{u}_{L^1},
  \end{equation}
where $C>0$ and $\gamma<1$ are independent of $t$.
\end{lem}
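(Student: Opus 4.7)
The plan is to mimic the proof of Lemma \ref{lemly1}, with $\Transf_t$ in place of $\Transf$. Two auxiliary facts are needed. First, $\Transf_t$ is a contraction on every $L^p$: the identity $\Transf_t u=\Transf(e^{\ic tf}u)$ gives $|\Transf_t u|\leq \Transf|u|$ pointwise, and $\Transf$ is an $L^p$-contraction. Second, $\Transf_t$ satisfies a Lasota-Yorke inequality on $\boL$ itself, uniformly for small $t$: $\norm{\Transf_t^n u}_{\boL}\leq C\gamma_0^n\norm{u}_{\boL}+C\norm{u}_{L^1}$ with $\gamma_0<1$ independent of $t$. This comes from the identity $\Transf_t^n u=\Transf^n(e^{\ic tS_n f}u)$ and the usual computation behind \eqref{eqLY}: the only change is the presence of the factor $e^{\ic tS_nf}$, whose oscillation on partition elements is controlled by $\sum m(a)Df(a)^\eta<\infty$ and the estimate \eqref{eq:EstimeClose}.

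For the continuity $\norm{\Transf_t-\Transf}_{\boL^{p,\reg}\to\boL^{p,\reg}}\to 0$, set $\delta(t):=\norm{\Transf_t-\Transf}_{\boL\to\boL}$, which tends to $0$ by \eqref{eq:EstimeClose}. Given $u$ with $\norm{u}_{\boL^{p,\reg}}\leq 1$ and $k\in\N$, pick a shift $A=A(t)\in\N$ to be optimized and decompose $u=v+w$ at level $k+A$, so that $\norm{v}_{\boL}\leq e^{k+A}$ and $\norm{w}_{L^p}\leq e^{-\reg(k+A)}$. Then $(\Transf_t-\Transf)v\in\boL$ has $\boL$-norm $\leq\delta(t)e^{k+A}$, while $(\Transf_t-\Transf)w=\Transf((e^{\ic tf}-1)w)\in L^p$ has $L^p$-norm $\leq 2e^{-\reg(k+A)}$ by the pointwise inequality $|(\Transf_t-\Transf)w|\leq 2\Transf|w|$. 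This provides a decomposition of $(\Transf_t-\Transf)u$ at level $k$ in $\boL^{p,\reg}$ with constant $\max(\delta(t)e^{A},\,2e^{-\reg A})$. Choosing $A$ so that $e^{(1+\reg)A}\sim 1/\delta(t)$ balances the two terms and gives a constant of order $\delta(t)^{\reg/(1+\reg)}$, which is independent of $k$. Hence $\norm{\Transf_t-\Transf}_{\boL^{p,\reg}\to\boL^{p,\reg}}=O(\delta(t)^{\reg/(1+\reg)})\to 0$.

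For the Lasota-Yorke inequality \eqref{LYt}, the argument is the verbatim adaptation of Lemma \ref{lemly1}. Given $u$ with $\norm{u}_{\boL^{p,\reg}}\leq 1$ and $k\in\N$, let $A$ be the integer part of $\epsilon n$, with $\epsilon>0$ chosen so that $\gamma_0 e^{-\epsilon}<1$, and decompose $u=v+w$ at level $k+A$. The uniform LY on $\boL$ yields
\begin{equation*}
\norm{\Transf_t^n v}_{\boL}\leq C\gamma_0^n e^{k+A}+C\norm{v}_{L^1}\leq C\gamma_0^n e^{k+A}+C\norm{u}_{L^1}+Ce^{-\reg(k+A)},
\end{equation*}
while the $L^p$-contractivity gives $\norm{\Transf_t^n w}_{L^p}\leq e^{-\reg(k+A)}$. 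Writing $\Transf_t^n u=\Transf_t^n v+\Transf_t^n w$ as a decomposition at level $k$ in $\boL^{p,\reg}$ gives constant $C(\gamma_0^n e^A+e^{-\reg A})+C\norm{u}_{L^1}$, which with our choice of $A$ is at most $C\gamma^n+C\norm{u}_{L^1}$ for some $\gamma<1$, as desired. The main obstacle is the uniform LY on $\boL$ for $\Transf_t$; once that is secured, the interpolation argument goes through in the same way as for $\Transf$.
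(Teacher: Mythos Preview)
Your argument for the continuity $\norm{\Transf_t-\Transf}_{\boL^{p,\reg}\to\boL^{p,\reg}}\to 0$ is essentially the paper's: both use the interpolation bound that any operator $M$ acting on $\boL$ and on $L^p$ acts on $\boL^{p,\reg}$ with norm at most $\max(e^A\norm{M}_{\boL\to\boL},\,e^{-\reg A}\norm{M}_{L^p\to L^p})$, and then optimize over $A$. The paper formulates this interpolation bound abstractly and applies it with $e^A\sim\norm{\Transf_t-\Transf}_{\boL\to\boL}^{-1/2}$; your choice $e^{(1+\reg)A}\sim 1/\delta(t)$ is slightly sharper but the idea is the same.

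For the Lasota--Yorke part the two proofs genuinely diverge. You redo the interpolation argument of Lemma~\ref{lemly1} with $\Transf_t$ in place of $\Transf$, which requires as input a \emph{uniform} Lasota--Yorke inequality for $\Transf_t$ on $\boL$. The paper avoids this input altogether: having already established continuity on $\boL^{p,\reg}$ and the Lasota--Yorke inequality \eqref{eq_LY_Bpa} for $\Transf$ on $\boL^{p,\reg}$, it fixes $N$ with $\norm{\Transf^N u}_{\boL^{p,\reg}}\leq\sigma\norm{u}_{\boL^{p,\reg}}+C\norm{u}_{L^1}$, passes to $\Transf_t^N$ by continuity (replacing $\sigma$ by some $\sigma_1<1$), and iterates using that $\Transf_t$ is an $L^1$-contraction. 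This is cleaner and more self-contained: it recycles the two facts just proved rather than going back to the direct computation behind \eqref{eqLY}.

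Your approach is correct, but your justification of the uniform Lasota--Yorke on $\boL$ for $\Transf_t$ is thin: citing \eqref{eq:EstimeClose} controls $\Transf_t-\Transf$, not directly the oscillation of $e^{\ic tS_nf}$ on cylinders. The cleanest way to secure it, without redoing the Aaronson--Denker computation, is in fact the same perturbative trick the paper uses on $\boL^{p,\reg}$, applied instead on $\boL$: fix $N$ from \eqref{eqLY}, perturb using \eqref{eq:EstimeClose}, and iterate. Once you note this, the two proofs become nearly equivalent in content; the paper just organizes the argument so that the perturbation step is done once, directly on $\boL^{p,\reg}$.
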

\begin{proof}
For any operator $M$ sending $\boL$ to $\boL$ and $L^p$ to
$L^p$, then $M$ sends $\boL^{p,\reg}$ to $\boL^{p,\reg}$ and,
for any integer $A\geq 0$,
  \begin{equation}
  \label{eq_interborn}
  \norm{M}_{\boL^{p,\reg}\to\boL^{p,\reg}} \leq \max(e^{A} \norm{M}_{\boL \to \boL},
e^{-\reg A} \norm{M}_{L^p \to
  L^p}).
  \end{equation}
This follows using the decomposition of $u\in \boL^{p,\reg}$ as
$v+w$ with $\norm{v}_{\boL}\leq e^{k+A}
\norm{u}_{\boL^{p,\reg}}$ and $\norm{w}_{L^p}\leq e^{-\reg
k-\reg A} \norm{u}_{\boL^{p,\reg}}$.

By~\eqref{eq:EstimeClose}, $\norm{\Transf_t -\Transf}_{\boL\to
\boL}$ tends to $0$, while $\norm{\Transf_t -\Transf}_{L^p\to
L^p}$ is uniformly bounded. Applying \eqref{eq_interborn} to
$M=\Transf_t-\Transf$ and $e^A$ close to
$\norm{\Transf_t-\Transf}_{\boL \to \boL}^{-1/2}$, we obtain
that $\norm{\Transf_t -\Transf}_{\boL^{p,\reg}\to
\boL^{p,\reg}}$ tends to zero.

By \eqref{eq_LY_Bpa}, we can fix $N>0$, $\sigma<1$ and $C>0$
such that $\norm{ \Transf^N u}_{\boL^{p,\reg}} \leq \sigma
\norm{u}_{\boL^{p,\reg}} +C \norm{u}_{L^1}$. Let $\sigma_1\in
(\sigma,1)$. Since $\norm{\Transf_t -\Transf}_{\boL^{p,\reg}\to
\boL^{p,\reg}}$ tends to $0$ when $t\to 0$, the previous
equation gives, for small enough $t$,
  \begin{equation}
  \norm{ \Transf_t^N u}_{\boL^{p,\reg}} \leq \sigma_1 \norm{u}_{\boL^{p,\reg}} +C \norm{u}_{L^1}
  \end{equation}
Iterating this equation, we get by induction over $k$
  \begin{equation}
  \norm{ \Transf_t^{kN} u}_{\boL^{p,\reg}} \leq \sigma_1^k \norm{u} _{\boL^{p,\reg}} +
  C\sum_{j=0}^{k-1} \sigma_1^{k-1-j} \norm{\Transf_t^{jN} u}_{L^1}.
  \end{equation}
Since $\Transf_t$ is a contraction on $L^1$, we obtain $\norm{
\Transf_t^{kN} u}_{\boL^{p,\reg}} \leq \sigma_1^k \norm{u}
_{\boL^{p,\reg}} + C'\norm{u}_{L^1}$, for $C'=C
\sum_{j=0}^\infty \sigma^j$. This proves \eqref{LYt} for $n$ of
the form $kN$, and the general case follows.
\end{proof}

\subsection{Gaining \texorpdfstring{$\delta$}{d} in the integrability exponent}

We wish to apply Corollary \ref{thm:kl2} to obtain the accurate
characteristic expansion. This theorem involves an
(arbitrarily) small loss of $\epsilon$, that we will have to
compensate at some point. In this paragraph, we show how a
regularity assumption of the form $\sum m(a)Df(a)^\eta<\infty$
makes it possible to obtain a definite gain in the
integrability exponent of some functions, which ultimately will
compensate the aforementioned loss.

\begin{lem}
\label{gain_integr} For any $\beta\in (0,1]$, there exists
$\delta>0$ with the following property. Let $f\in L^p$ (for
some $p \in [1, 1/\beta]$) satisfy $\sum m(a)
Df(a)^\beta<\infty$. Let $c\in [\beta,p]$, and consider a
function $u$ such that $|u|\leq |f|^c$, and, for all $a\in
\alpha$,
  \begin{equation}
  Du(a) \leq \begin{cases}
  Df(a) & \text{ if }c\leq 1,\\
  Df(a) \norm{1_a f}_{L^\infty}^{c-1} & \text{ if }c>1.
  \end{cases}
  \end{equation}
Let $q$, $r$ be positive numbers (possibly $q=\infty$) such
that $1/r= 1/(p/c) + 1/q$, and $r\geq 1+\beta$. Then the
operator $v\mapsto \Transf (u v)$ maps $L^q$ to $L^{r+\delta}$
(and its norm is bounded only in terms of $f$ and $\beta$).
\end{lem}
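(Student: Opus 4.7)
The plan is to exploit the explicit formula for the transfer operator, $\Transf(uv)(x) = \sum_{a \in \alpha} g(y_a)(uv)(y_a)$, where $y_a = (T|_a)^{-1}(x)$ denotes the preimage of $x$ in $a$ (when it exists), together with the distortion bound $g(y_a) \leq C m(a)\mathbf{1}_{T(a)}(x)$ from \eqref{eq_bounds_jac}. A naive application of Hölder's inequality combined with the fact that $\Transf$ is a contraction on every $L^r$ space already gives
$\|\Transf(uv)\|_{L^r} \leq \|u\|_{L^{p/c}} \|v\|_{L^q} \leq \|f\|_{L^p}^c \|v\|_{L^q}$,
i.e.\ the lemma \emph{without} the gain $\delta$. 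The whole point is to extract a strictly positive $\delta=\delta(\beta)$ from the regularity hypothesis $\sum_a m(a) Df(a)^\beta < \infty$.

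The first step is to control $u$ on each atom. The pointwise bound $|u|\le|f|^c$ together with the assumed bound on $Du(a)$ yields $\|\mathbf{1}_a u\|_\infty \le C\|\mathbf{1}_a f\|_\infty^c$, and the Lipschitz control on $f$ itself gives
$\|\mathbf{1}_a f\|_\infty \leq |f(y_a)| + Df(a)\operatorname{diam}(a)$,
which splits the bound into a ``pointwise'' piece (controlled by $|f(y_a)|^c$) and a ``Lipschitz'' piece (controlled by $(Df(a)\operatorname{diam}(a))^c$). Since $\operatorname{diam}(a)$ is uniformly bounded (by expansion), the Lipschitz piece is dominated by a power of $Df(a)$, which is exactly the quantity controlled by the regularity assumption on $f$.

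The core step is then to apply a three-fold Hölder inequality to the pointwise estimate
$|\Transf(uv)(x)| \leq C \sum_a m(a)\|\mathbf{1}_a u\|_\infty\,|v(y_a)|\mathbf{1}_{T(a)}(x)$,
with exponents chosen so that the three resulting factors absorb respectively: (i) a power of $\sum_a m(a) Df(a)^\beta$, finite by assumption; (ii) an $L^p$-type quantity for $f$, using $\sum_a m(a)|f(y_a)|^p 1_{T(a)}(x)\leq C\Transf(|f|^p)(x)$ and the invariance $\int\Transf(|f|^p)=\|f\|_{L^p}^p$; and (iii) a quantity of the form $\Transf(|v|^s)(x)^{1/s}$ with $s\leq q$. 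Raising to the power $r+\delta$, integrating, and using the $L^1$-contraction of $\Transf$ to strip off the transfer operator in the $v$-term should then yield $\|\Transf(uv)\|_{L^{r+\delta}} \leq C\|v\|_{L^q}$, with the constant depending only on $\|f\|_{L^p}$, $\sum m(a) Df(a)^\beta$, and $\beta$.

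The main obstacle will be to balance the Hölder exponents so that the final integrability strictly exceeds $r$ by a definite amount $\delta=\delta(\beta)>0$, while simultaneously all three factors remain finite. This is precisely the point at which the hypothesis $r\ge 1+\beta$ enters: in the borderline case $r=1+\beta$ the three exponents degenerate to the critical values at which the $\sum m(a) Df(a)^\beta$-factor is tight, and any $r\geq 1+\beta$ leaves a uniform slack $\delta(\beta)>0$, independent of $p$, $c$ and $q$, that the Hölder split can convert into extra integrability. Once the gain $\delta$ is secured in the non-trivial range ($c\le 1$), the case $c>1$ follows from the analogous estimate combined with the refined Lipschitz bound $Du(a)\leq Df(a)\|\mathbf{1}_af\|_\infty^{c-1}$, which absorbs the extra power of $\|\mathbf{1}_af\|_\infty$ into $|f(y_a)|^{c-1}$ plus a further contribution summable via $\sum m(a) Df(a)^\beta$.
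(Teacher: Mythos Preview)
Your sketch has a genuine gap at the ``core step.'' Once you pass to the atom-wise bound $|\Transf(uv)(x)|\le C\sum_a m(a)\|1_a u\|_\infty |v(y_a)|$, you have discarded the pointwise value $|u(y_a)|$ in favour of the sup-norm on $a$. Your factor~(ii) then claims to recover ``an $L^p$-type quantity for $f$'' via $\sum_a m(a)|f(y_a)|^p\le C\Transf(|f|^p)$, but the quantity remaining in your estimate is $\|1_a u\|_\infty\le \|1_a f\|_\infty^c$, not $|f(y_a)|^c$, and $\sum_a m(a)\|1_a f\|_\infty^p$ is \emph{not} controlled by the hypotheses (only the $\beta$-th power is). If instead you first split $\|1_a u\|_\infty\le C(|f(y_a)|^c+Df(a)^c)$ as in your ``first step'' and treat the two pieces separately, the Lipschitz piece does admit a gain, but the pointwise piece yields exactly $\Transf(|f|^c|v|)$, which lies in $L^r$ with \emph{no} improvement to $L^{r+\delta}$: the regularity assumption is not used on this term, so there is nothing to produce the gain. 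Either reading of your three-fold H\"older therefore fails to close.

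The paper's argument avoids this by \emph{not} replacing $|u|$ by its sup-norm in the main term. The key observation is that $\Transf(|u|^{\tilde\beta})$, with $\tilde\beta=\beta^2$, is a \emph{bounded function}: it is dominated by $C\sum_a m(a)\|1_a u\|_\infty^{\tilde\beta}$, and this sum is finite precisely by your pointwise-plus-Lipschitz split (so that step is used, but only to control a small power). One then applies H\"older \emph{inside} the positive operator $\Transf$, pointwise in $x$:
\[
\Transf(|uv|)\le \Transf(|u|^{\tilde\beta})^{1/\rho}\,\Transf\bigl(|u|^{(1-\tilde\beta/\rho)\tilde r}|v|^{\tilde r}\bigr)^{1/\tilde r},
\qquad \frac{1}{\rho}+\frac{1}{\tilde r}=1,
\]
with $\tilde r>r$ chosen so that a second H\"older (after raising to the $\tilde r$-th power and integrating, using $\int\Transf=\int$) matches $|u|^{(1-\tilde\beta/\rho)\tilde r}$ against $u\in L^{p/c}$ and $|v|^{\tilde r}$ against $v\in L^q$. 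The crucial difference from your scheme is that the actual pointwise value $|u|$ is retained in the second factor, so the full integrability $u\in L^{p/c}$ is available there; the sup-norm information is spent only on the small power $\tilde\beta$ that creates the gain $\tilde r-r$. The condition $r\ge 1+\beta$ then enters only to bound $\tilde r-r$ from below uniformly in the parameters.
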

Since $u \in L^{p/c}$, the H\"{o}lder inequality shows that the
operator $v\mapsto \Transf (u v)$ maps $L^q$ to $L^r$. The
lemma asserts that there is in fact a small gain of $\delta$ in
the integrability exponent, due to the regularity property
$\sum m(a) Df(a)^\beta<\infty$. Moreover, the gain is uniform
over the parameters if $r$ stays away from $1$.
\begin{proof}
We will show that, under the assumptions of the lemma, the
operator $v\mapsto \Transf(uv)$ maps $L^q$ to $L^{\tilde r}$,
for $\tilde r= \frac{pq/c -\beta^2 q}{p/c+q-\beta^2 q}$. Since
$\tilde r -r$ is uniformly bounded from below when the
parameters vary according to the conditions of the lemma, this
will conclude the proof.\footnote{Indeed,
  \begin{equation}
  \label{eqtilderr}
  \tilde r -r =\frac{\beta^2 q \left( pq/c-p/c-q \right)}{(p/c+q)(p/c+q-\beta^2 q)}.
  \end{equation}
Since $r\geq 1+\beta$, we have $pq/c \geq (1+\beta)(p/c+q)$.
Therefore, the second term of the numerator of
\eqref{eqtilderr} is at least $\beta (p/c +q)$. Simplifying
with the denominator, we get
  \begin{equation}
  \tilde r-r \geq \frac{\beta^3}{ p/qc + 1 -\beta^2} \geq \frac{\beta^3}{\beta^{-2}+1-\beta^2}.
  \end{equation}
}

Let us first show that
  \begin{equation}
  \label{sumfbounded}
  \sum_{a\in \alpha} m(a) \norm{f 1_a}_{L^\infty}^\beta <\infty.
  \end{equation}
For $x,y\in a$, we have $|f(x)|\leq |f(y)|+Df(a)$. Integrating
over $y$, we get $|f(x)|\leq \frac{1}{m(a)} \int_a |f| +
Df(a)$. Together with the inequality $(t'+t)^\beta\leq 1+t' +
t^\beta$, valid for any $t',t\geq 0$, we obtain
  \begin{equation*}
  \sum_{a\in\alpha} m(a) \norm{f 1_a}_{L^\infty}^\beta
  \leq \sum_{a\in\alpha} m(a) \left( 1+\frac{1}{m(a)}\int_a |f|\right)
  + \sum_{a\in\alpha} m(a) Df(a)^\beta.
  \end{equation*}
These sums are finite, concluding the proof of
\eqref{sumfbounded}.

Let $\tilde\beta=\beta^2$, we will now show that
  \begin{equation}
  \label{sumubounded}
  \sum_{a\in \alpha} m(a) \norm{u 1_a}_{L^\infty}^{\tilde\beta} <\infty.
  \end{equation}
By the previous argument, it is sufficient to show that
$\sum_{a\in \alpha} m(a)Du(a)^{\tilde\beta}$ is finite. If
$c\leq 1$, $Du(a)^{\tilde\beta}\leq Df(a)^{\tilde\beta} \leq
\max(1,Df(a)^\beta)$, and the result follows. If $c>1$,
  \begin{align*}
  Du(a)^{\tilde\beta} &\leq Df(a)^{\tilde\beta} \norm{1_a f}_{L^\infty}^{(c-1){\tilde\beta}}
  \leq \max(Df(a),\norm{1_a f}_{L^\infty})^{c{\tilde\beta}}
  \\& \leq Df(a)^{c{\tilde\beta}} + \norm{1_a f}_{L^\infty}^{c{\tilde\beta}}.
  \end{align*}
Since $c{\tilde\beta}\leq \beta$, \eqref{sumfbounded} shows
that $\sum m(a)Du(a)^{\tilde\beta}<\infty$, concluding the
proof of \eqref{sumubounded}.

By \eqref{defLp} and \eqref{eq_bounds_jac},
$\Transf(|u|^{\tilde\beta})$ is bounded by $\sum_{a\in\alpha}
m(a) \norm{u 1_a}_{L^\infty}^{\tilde\beta}$, which is finite.
Hence, $\Transf(|u|^{\tilde\beta})$ is a bounded function.

Let us now estimate $\Transf(uv)$ for $v\in L^q$. Let
$\rho=\tilde r/(\tilde r-1)$, so that $1/\rho+1/\tilde r=1$. We
have
  \begin{equation}
  \Transf(|uv|) = \Transf(|u|^{{\tilde\beta}/\rho} |u|^{1-{\tilde\beta}/\rho} |v|) \leq \Transf(
  |u|^{{\tilde\beta}})^{1/\rho} \Transf( |u|^{\tilde
  r(1-{\tilde\beta}/\rho)} |v|^{\tilde
  r}) ^{1/\tilde r}.
  \end{equation}
Since $\Transf( |u|^{{\tilde\beta}})$ is bounded, we obtain
  \begin{equation*}
  \norm{\Transf(uv)}_{L^{\tilde r}} \leq C \left( \int \Transf
  (|u|^{\tilde r(1-{\tilde\beta}/\rho)} |v|^{\tilde r})
  \right)^{1/\tilde r}
  =C \left(\int |u|^{\tilde r(1-{\tilde\beta}/\rho)} |v|^{\tilde r} \right)^{1/\tilde r}.
  \end{equation*}
Let $s$ and $t$ be such that $1/s+1/t=1$ and $t\tilde r=q$,
i.e., $t=q/\tilde r$ and $s=q/(q-\tilde r)$. The H\"{o}lder
inequality gives
  \begin{equation}
  \int |u|^{\tilde r(1-{\tilde\beta}/\rho)} |v|^{\tilde r}
  \leq \left( \int |u|^{\tilde r(1-{\tilde\beta}/\rho) s}\right)^{1/s} \left( \int |v|^{\tilde rt}
  \right)^{1/t}.
  \end{equation}
The choice of $\tilde r$ above ensures that $\tilde
r(1-{\tilde\beta}/\rho)s=p/c$. Hence, the integral involving
$u$ is finite, since $u\in L^{p/c}$. We obtain
$\norm{\Transf(uv)}_{L^{\tilde r}} \leq C \norm{v}_{L^q}$, as
required.
\end{proof}

\begin{lem}
\label{gain_integrfort} For any $\beta\in
(0,1]$, there exists $\delta>0$ with the following property.
Let $f\in L^p$ (for some $p \in [1, 1/\beta]$) satisfy $\sum
m(a) Df(a)^\beta<\infty$. Let $c\in [\beta,p]$, and consider a
function $u$ such that $|u|\leq |f|^c$, and, for all $a\in
\alpha$,
  \begin{equation}
  Du(a) \leq \begin{cases}
  Df(a) & \text{ if }c\leq 1,\\
  Df(a) \norm{1_a f}_{L^\infty}^{c-1} & \text{ if }c>1.
  \end{cases}
  \end{equation}
Let $q$, $r$ be positive numbers (possibly $q=\infty$) such
that $1/r= 1/(p/c) + 1/q$, and $r\geq 1+\beta$. Then, for any
$\reg>0$, there exists $\reg'=\reg'(f,\beta,\reg)$ such that
the operator $v\mapsto \Transf (u v)$ maps $\boL^{q, \reg}$ to
$\boL^{r+\delta, \reg'}$ (and its norm is bounded only in terms
of $f,\beta,s$).
\end{lem}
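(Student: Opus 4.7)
My plan is to combine two decompositions: the decomposition of $v \in \boL^{q,\reg}$ into a H\"older piece and a small $L^q$ piece, and a truncation of $u$ based on the level sets of $|f|$.

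First I would use the definition of $\boL^{q,\reg}$ to write, for each integer $k \ge 0$, $v = v_k + w_k$ with $\norm{v_k}_\boL \le C e^k \norm{v}_{\boL^{q,\reg}}$ and $\norm{w_k}_{L^q} \le C e^{-\reg k} \norm{v}_{\boL^{q,\reg}}$. Then $\Transf(uv) = \Transf(u v_k) + \Transf(u w_k)$, and the second term is directly controlled by Lemma \ref{gain_integr}: $\norm{\Transf(u w_k)}_{L^{r+\delta}} \le C e^{-\reg k}$. For the first term, I would split $u = u_N + u^N$ where $u_N := u \cdot 1_{A_N}$ with $A_N := \bigcup_{a \in \alpha : \norm{f 1_a}_\infty \le N} a$. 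Since $A_N$ is a union of atoms, either $u_N|_a = u|_a$ or $u_N|_a = 0$ for each $a \in \alpha$, so $u_N$ inherits $Du_N(a) \le Du(a)$ on $A_N$.

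For the ``main'' piece $\Transf(u_N v_k)$, a direct computation using bounded distortion and the $\eta$-H\"older structure gives $\norm{\Transf(u_N v_k)}_\boL \le C N^A \norm{v_k}_\boL$ for some $A > 0$. This reduces to estimating $\sum_{a \subset A_N} m(a)[D_\eta u_N(a) + \norm{u_N 1_a}_\infty]$ via $D_\eta u(a) \le C \norm{u 1_a}_\infty^{1-\eta} Du(a)^\eta$ together with the uniform bounds $\norm{u 1_a}_\infty \le N^c$ and $Du(a) \le Df(a) \max(1, N^{c-1})$ on $A_N$, and the summability $\sum m(a) Df(a)^\eta < \infty$. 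For the ``tail'' piece $\Transf(u^N v_k)$, I would rerun the argument of Lemma \ref{gain_integr} with $\tilde\beta = \beta^2/2$, strictly smaller than the critical value used in its proof. The gain $\delta$ becomes slightly smaller, but one obtains a polynomial decay
\begin{equation*}
\norm{\Transf(u^N v_k)}_{L^{r+\delta}} \le C N^{-\alpha} \norm{v_k}_\boL
\end{equation*}
with $\alpha > 0$, because the key auxiliary estimate becomes
\begin{equation*}
\norm{\Transf(|u^N|^{\tilde\beta})}_\infty \le C \sum_{a: \norm{f 1_a}_\infty > N} m(a) \norm{f 1_a}_\infty^{c\tilde\beta} \le C N^{-(\beta - c\tilde\beta)} \sum_a m(a) \norm{f 1_a}_\infty^\beta,
\end{equation*}
in which $\beta - c\tilde\beta > 0$ strictly (since $c \le 1/\beta$ forces $c\tilde\beta \le \beta/2$).

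Combining everything gives the decomposition $\Transf(uv) = V + W$ with $V = \Transf(u_N v_k)$ and $W = \Transf(u^N v_k) + \Transf(u w_k)$, satisfying $\norm{V}_\boL \le C N^A e^k$ and $\norm{W}_{L^{r+\delta}} \le C(N^{-\alpha} e^k + e^{-\reg k})$. Given a target level $K$, optimizing by choosing $k = \lambda K$ and $N = e^{(1-\lambda)K/A}$ for suitable $\lambda \in (0,1)$ depending on $\reg, \alpha, A$ yields $\norm{V}_\boL \le e^K$ and $\norm{W}_{L^{r+\delta}} \le e^{-\reg' K}$ for some $\reg' = \reg'(f, \beta, \reg) > 0$, which is exactly the membership in $\boL^{r+\delta, \reg'}$. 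The main obstacle is extracting a genuinely \emph{polynomial} tail decay $N^{-\alpha}$: a naive application of the previous lemma only yields $o(1)$ as $N \to \infty$ (since the tail of $|f|$ in $L^p$ can decay arbitrarily slowly), and the key trick is to deliberately choose $\tilde\beta$ strictly below the critical value allowed in the proof of Lemma \ref{gain_integr}, which forces the definite positive exponent $\beta - c\tilde\beta$.
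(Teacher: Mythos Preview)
Your overall architecture matches the paper's exactly: split $v=v_k+w_k$ via the definition of $\boL^{q,\reg}$, truncate $u$ over atoms, bound the ``main'' piece in $\boL$ with polynomial growth in the cutoff, bound the ``tail'' in $L^{r+\delta}$ with polynomial decay, then optimise $k$ against the cutoff. There is one genuine gap in your main-piece estimate, and your tail mechanism is organised differently from the paper's.

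\smallskip
\textbf{The gap.} Your cutoff $A_N$ constrains only $\norm{f 1_a}_\infty$, not $Df(a)$. This forces you to control $\norm{\Transf(u_N v_k)}_\boL$ through the sum $\sum_{a\subset A_N} m(a)\,D_\eta u_N(a)$ and the interpolation $D_\eta u(a)\le C\norm{u 1_a}_\infty^{1-\eta}Du(a)^\eta$, after which you invoke $\sum m(a)Df(a)^\eta<\infty$. But the lemma only assumes $\sum m(a)Df(a)^\beta<\infty$, and nothing ties $\beta$ to the fixed H\"older exponent $\eta$ of $\boL$; in the application (proof of Theorem~\ref{thm_highdiff}) one takes $\beta$ strictly smaller than $\eta$, and for a general $f$ satisfying the hypotheses of the lemma the sum $\sum m(a)Df(a)^\eta$ need not be finite. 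The fix is painless and is precisely the paper's move: enlarge the truncation to $A_N=\{a:\norm{f1_a}_\infty+Df(a)\le N\}$. Then $\norm{u_N 1_a}_\infty\le N^c$ and $Du_N(a)\le N^{\max(c,1)}$ on \emph{every} atom, so $\norm{u_N}_\boL\le C N^{\max(c,1)}$ directly and $\norm{\Transf(u_Nv_k)}_\boL\le C N^{\max(c,1)}\norm{v_k}_\boL$ follows from $\norm{zz'}_\boL\le C\norm{z}_\boL\norm{z'}_\boL$, with no summation and no appeal to $\eta$. Your tail argument still works with the enlarged complement $B_N$: on atoms with $Df(a)>N/2$ but $\norm{f1_a}_\infty\le N/2$ one has $\sum m(a)\norm{f1_a}_\infty^{c\tilde\beta}\le CN^{c\tilde\beta}\,m(\{Df>N/2\})\le CN^{c\tilde\beta-\beta}$, which still decays.

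\smallskip
\textbf{The tail: two equivalent routes.} Your device---lower the internal parameter to $\tilde\beta=\beta^2/2$ inside the proof of Lemma~\ref{gain_integr} so that $\norm{\Transf(|u^N|^{\tilde\beta})}_\infty\le CN^{-(\beta-c\tilde\beta)}$ yields a definite polynomial decay---is correct and different from the paper's. The paper instead keeps Lemma~\ref{gain_integr} as a black box, applies it (with $\beta/2$ and a slightly shifted target exponent $r'$ chosen via a large auxiliary $Q$) to the test function $1_{B(K)}$, and reads off the decay from $\norm{1_{B(K)}}_{L^Q}=m(B(K))^{1/Q}\le CK^{-\beta/Q}$. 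Your way avoids the auxiliary index $r'$ at the cost of reopening the previous proof; the paper's way is more modular. Both produce the required $N^{-\alpha}$ with $\alpha>0$ depending only on $\beta$.
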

\begin{proof}
Let $\delta_0$ be the value of $\delta$ given by Lemma
\ref{gain_integr} for $\beta/2$ instead of $\beta$. We will
prove that the lemma holds for $\delta=\delta_0/2$.

For $K\geq 1$, denote by $A(K)$ the union of the elements
$a\in\alpha$ such that $Df(a)+\norm{1_a f}_{L^\infty} \leq K$,
and let $B(K)$ be its complement. The finiteness of the sum
$\sum_{a\in \alpha} m(a) (Df(a)^\beta+ \norm{1_a
f}_{L^\infty}^\beta)$ (which has been proved in
\eqref{sumfbounded}) implies that there exists $C$ such that
  \begin{equation}
  \label{eqbornemesureBK}
  m(B(K)) \leq C K^{-\beta}.
  \end{equation}
Moreover, let $d=\max(c,1)$, then $u$ is bounded by $K^d$ on
$A(K)$, and its Lipschitz constant is also bounded by $K^d$.
Therefore,
  \begin{equation}
  \label{eqbornedansboL}
  \norm{ 1_{A(K)} u}_{\boL} \leq C K^d.
  \end{equation}

Take $v\in \boL^{q,\reg}$ bounded by $1$, and $k\in \N$. By
definition of $\boL^{q,\reg}$, we can write $v=w+w'$ with
$\norm{w}_{\boL} \leq e^k$ and $\norm{w'}_{L^q}\leq e^{-\reg
k}$. For any $K\geq 1$, we obtain a decomposition of
$\Transf(uv)$ as the sum of $\Transf( 1_{A(K)} u w)$ and
$\Transf( 1_{B(K)} u w+ uw')$. We claim that
  \begin{equation}
  \label{eqdansboL}
  \norm{\Transf( 1_{A(K)} u w)}_{\boL} \leq C K^d e^{k}
  \end{equation}
and, for some $\epsilon>0$,
  \begin{equation}
  \label{eqdansLr}
  \norm{\Transf( 1_{B(K)} u w+uw')}_{L^{r+\delta_0/2}} \leq C e^{-\reg k} + C e^k K^{-\epsilon}.
  \end{equation}
This concludes the proof of the lemma, for $\delta=\delta_0/2$
and $\reg'=\reg/( 1+ d(1+\reg)/\epsilon)$. Indeed, for $K=\exp(
(1+\reg)k/\epsilon)$, the bound in \eqref{eqdansboL} becomes $C
e^{\reg k/\reg'}$, and the bound in \eqref{eqdansLr} becomes
$Ce^{-\reg k}$. Taking $k'$ close to $\reg k/\reg'$, we have
obtained a decomposition of $\Transf( uv)$ as a sum $\tilde
w+\tilde w'$ with $\norm{\tilde w}_{\boL} \leq C e^{k'}$ and
$\norm{\tilde w'}_{L^{r+\delta_0/2}} \leq C e^{-\reg' k'}$, as
desired.

It remains to prove \eqref{eqdansboL} and \eqref{eqdansLr}. The
former follows from the inequality $\norm{zz'}_{\boL}\leq C
\norm{z}_{\boL} \norm{z'}_\boL$, applied to the functions
$z=1_{A(K)}u$ (whose norm is bounded by
\eqref{eqbornedansboL}), and $z'=w$ (whose norm is at most
$e^k$).

We turn to \eqref{eqdansLr}. First, by Lemma \ref{gain_integr},
  \begin{equation}
  \norm{\Transf(uw')}_{L^{r+\delta_0/2}}\leq \norm{\Transf(uw')}_{L^{r+\delta_0}}
  \leq C \norm{w'}_{L^q},
  \end{equation}
which is at most $e^{-\reg k}$. Let then $Q$ be large enough,
and let $r'$ be such that $1/r'=1/(p/c)+1/Q$. Then
  \begin{equation}
  r-r'=rr' \left(\frac{1}{r'}-\frac{1}{r} \right)
  =rr' \left( \frac{1}{Q} -\frac{1}{q}\right)
  \leq \frac{rr'}{Q}.
  \end{equation}
Moreover, $1/r \geq 1/(p/c) \geq \beta^2$, and $1/r'\geq
\beta^2$ as well. Hence, $r-r' \leq \beta^{-4}/Q$. Choosing $Q$
large enough, we can ensure $r-r'\leq \delta_0/2$. Therefore,
  \begin{align*}
  \norm{\Transf( 1_{B(K)} u w)}_{L^{r+\delta_0/2}}
  &\leq \norm{w}_{L^\infty} \norm{\Transf( 1_{B(K)} |u| )}_{L^{r+\delta_0/2}}
  \\&\leq \norm{w}_{L^\infty} \norm{\Transf( 1_{B(K)} |u| )}_{L^{r'+\delta_0}}.
  \end{align*}
Since $1/(p/c) \leq 1/r \leq 1/(1+\beta)$, we have $1/r'\leq
1/(1+\beta)+1/Q$, which is at most $1/(1+\beta/2)$ if $Q$ is
large enough. Thanks to the definition of $\delta_0$ above, we
can therefore apply Lemma \ref{gain_integr} to the function
$v=1_{B(K)}$ and the parameters $r',Q,\beta/2$, to obtain
  \begin{equation}
  \norm{\Transf( 1_{B(K)} |u| )}_{L^{r'+\delta_0}}
  \leq C \norm{ 1_{B(K)}}_{L^Q}.
  \end{equation}
Since $\norm{w}_{L^\infty}\leq e^k$ and  $\norm{
1_{B(K)}}_{L^Q} \leq C K^{-\beta/Q}$ by
\eqref{eqbornemesureBK}, this proves \eqref{eqdansLr} for
$\epsilon=\beta/Q$.
\end{proof}

\subsection{Accurate characteristic expansions for integrable functions}

We will now prove that a function $f$ satisfying $\sum
m(a)Df(a)^\eta<\infty$ admits an admissible characteristic
expansion. By Lemma \ref{lempasL1}, it is sufficient to treat
the case $f\in L^{1+\eta/2}$. We will give very precise
asymptotics of the eigenvalue $\lambda(t)$ of the transfer
operator, yielding also other limit theorems in the $L^2$ case.

\begin{thm}
\label{thm_highdiff} Let $\eta\in (0,1]$. There exists a
function $\epsilon: (1,\infty) \to \R_+^*$, bounded away from
zero on compact subsets of $(1,\infty)$, with the following
property.

Let $f$ satisfy $\sum m(a)Df(a)^\eta<\infty$, and $f\in L^p$
for some $p>1$. Then there exist complex numbers $c_i$ (for
$1\leq i < p+\epsilon(p)$) such that
  \begin{equation}
  \label{eqdevelopthighdegree}
  \lambda(t)=E(e^{\ic tf})+ \sum_{2\leq i<p+\epsilon(p)} c_i t^i +
O(|t|^{p+\epsilon(p)}).
  \end{equation}
\end{thm}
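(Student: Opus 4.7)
The plan is to apply Corollary~\ref{thm:kl2} to a suitable chain $\BB_0 \supset \BB_1 \supset \cdots \supset \BB_N$ of the interpolation spaces $\boL^{r,s}$, where $N$ is chosen strictly greater than $p$. The natural candidates for the Taylor coefficients of $\Transf_t$ at $t=0$ are $Q_k u = \Transf((\ic f)^k u / k!)$, since $\Transf_t u = \Transf(e^{\ic tf}u)$. Each $Q_k$ is essentially multiplication by a power of $f$, so it degrades integrability from $L^{r_j}$ to roughly $L^{r_j p/(p+kr_j)}$ by H\"older. The whole game is to show that Lemma~\ref{gain_integrfort} produces a small but uniform extra gain $\delta$ beyond H\"older at each step, which, accumulated across the $N$ steps, compensates the ``deficit'' $N/p-1>0$ arising when $N>p$, while simultaneously providing an expansion of $\xi_t$ up to order $b(0,N)-\epsilon>p+\epsilon(p)$.

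Concretely, I would first fix $N$ to be an integer slightly larger than $p$ (for instance the smallest integer such that $N-1+b_0>p+2\epsilon(p)$ for a suitable $b_0\in(0,1]$), and choose exponents $1\leq r_0<r_1<\cdots<r_N$ together with regularity parameters $s_0>s_1>\cdots>s_N>0$, setting $\BB_j=\boL^{r_j,s_j}$. The $r_j$ should satisfy approximately $1/r_{j-k}\leq 1/r_j+k/p-c_k\delta$, so that $Q_k$ is bounded from $\BB_j$ to $\BB_{j-k}$, where $c_k\delta>0$ is the gain from Lemma~\ref{gain_integrfort}; I would verify that one can choose such a chain with $r_j\geq 1+\beta$ throughout (so the gain remains uniform) and with $1/r_0\leq 1$ so that $\BB_0$ contains constants. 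With this setup, Lemma~\ref{lemly1} and its corollary provide the spectral gap of $\Transf$ on each $\BB_j$, Lemma~\ref{lemly2} gives the uniform Lasota--Yorke inequality~\eqref{eq:lypert2} for $\Transf_t$, the bounds~\eqref{eq:Qbound} for $Q_k$ follow from Lemma~\ref{gain_integrfort} applied with $c=k$ and $u=(\ic f)^k/k!$ (whose Lipschitz behaviour $Du(a)\leq Df(a)\|1_a f\|_\infty^{k-1}$ matches the hypothesis of the lemma), and the Taylor-type estimates~\eqref{eq:Cksmooth} follow from the pointwise bound $|e^{\ic x}-\sum_{\ell<k}(\ic x)^\ell/\ell!|\leq C|x|^c$ valid for any $c\in[k-1,k]$, combined again with Lemma~\ref{gain_integrfort} applied to the multiplier of $u$.

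Once Corollary~\ref{thm:kl2} applies, it yields an expansion $\xi_t=1+\sum_{k=1}^{N-1}t^k u_k+O(|t|^{b(0,N)-\epsilon})$ in $\BB_0$, with each $u_k\in\BB_{N-k}$. Plugging into~\eqref{eqCentraleLambdat} gives $\lambda(t)=E(e^{\ic tf})+\int(e^{\ic tf}-1)(\xi_t-1)$, and I would expand the integral. The remainder is controlled by $\|e^{\ic tf}-1\|_{L^{r_0'}}\cdot O(|t|^{b(0,N)-\epsilon})=O(|t|^{p+\epsilon(p)})$ via H\"older (using the dual exponent $r_0'$ and the uniform bound $|e^{\ic tf}-1|\leq 2$). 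Each principal term $t^k\int(e^{\ic tf}-1)u_k$ would then be expanded by the Taylor series of $e^{\ic tf}$ itself, yielding integer-power contributions $\frac{(\ic t)^j}{j!}\int f^j u_k$, which converge thanks to the pairing $u_k\in L^{r_{N-k}}$ against $f^j$, plus a tail of the required order. Collecting these by total power of $t$ produces the coefficients $c_i$ in~\eqref{eqdevelopthighdegree}.

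The main obstacle is the construction of the chain in the first step. The $\delta$-gain of Lemma~\ref{gain_integrfort} is uniform only when $r$ stays away from $1$, so the $N$ cumulative gains must be carefully balanced against the integrability losses in order to keep every $r_j$ inside a fixed range $[1+\beta,\infty)$. This is precisely why $\epsilon(p)$ can only be claimed to be bounded away from zero on compact subsets of $(1,\infty)$: as $p\to 1^+$ one barely accommodates $N=2$ and the margin shrinks, while for large $p$ one needs many intermediate spaces whose accumulated gain must overcome a proportionally larger deficit.
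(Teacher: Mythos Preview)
Your overall architecture matches the paper exactly: the same chain of interpolation spaces $\boL^{r,s}$, the same operators $Q_k u=\Transf((\ic f)^k u/k!)$, Lemma~\ref{gain_integrfort} for the integrability gain, Corollary~\ref{thm:kl2} for the expansion of $\xi_t$, and insertion into~\eqref{eqCentraleLambdat}. The gap is in the quantitative choice of $N$ and $b_0$.

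You aim for $b(0,N)>p$, so that the expansion of $\xi_t$ by itself already reaches past order $p$. This cannot be done. Condition~\eqref{eq:Cksmooth} for the pair $(i,j)=(0,N)$ reads $\norm{\Delta_N(t)}_{\BB_N\to\BB_0}\leq C|t|^{b(0,N)}$, and since $\Delta_N(t)v=\Transf(f_N(t)v)$ with $|f_N(t)|\leq 2|tf|^c$ only for $c\in[N-1,N]$, extracting $|t|^{b(0,N)}$ forces $c=b(0,N)$. Lemma~\ref{gain_integrfort} then requires $c\leq p$ and, more stringently, the target exponent $r$ (with $1/r=c/p+1/q$) to be at least $1+\beta$; with $q=r_N$ large this forces $c\leq p/(1+\beta)<p$. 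The gain $\delta$ of that lemma is \emph{per application of $\Transf$}, not ``accumulated across $N$ steps'': each $\Delta_{j-i}(t)$ involves a single $\Transf$, hence a single gain, regardless of $j-i$. There is no mechanism by which $N$ successive one-step gains can rescue the single big-jump condition for $(0,N)$.

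The paper's way out is different and is exactly the point you miss. It takes $b(0,N)=p/(1+\delta_p)<p$ (choosing $\delta_p\in[1/5A,1/2A]$ so that this number stays at definite distance from the integers), so that every instance of~\eqref{eq:Cksmooth} and~\eqref{eq:Qbound} is verifiable with $c\leq p$ and $r\geq 1+\beta$. The expansion of $\xi_t$ then reaches only order $b(0,N)-\epsilon<p$. The extra bit comes at the very end: the remainder $r_t$ lies in $\BB_0=\boL^{p_0+\delta,s_0}\subset L^{p_0+\delta}$, and it is this single $+\delta$ in the exponent (from one use of Lemma~\ref{gain_integrfort}) that makes the H\"older pairing work. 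Using $|e^{\ic x}-1|\leq 2|x|^{p/q}$ with $1/q+1/(p_0+\delta)=1$ gives $\norm{e^{\ic tf}-1}_{L^q}\leq C|t|^{p/q}$, and then $p/q+b(0,N)-\epsilon>p$ precisely because $\delta>0$. Your remainder bound via $|e^{\ic tf}-1|\leq 2$ discards exactly this factor. The same $+\delta$ in $u_k\in L^{p_k+\delta}$ is what makes each principal term $t^k\int(e^{\ic tf}-1)u_k$ split into a polynomial plus $O(|t|^{p+\epsilon})$; without it one lands at $O(|t|^p)$ and no better.
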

This theorem contains the characteristic expansion of $f\in
L^p$ for $p>1$:
\begin{cor}
Let $f\in L^{1+\eta/2}$, then $f$ admits an accurate
characteristic expansion.
\end{cor}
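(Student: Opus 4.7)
The plan is to deduce the Corollary from Theorem~\ref{thm_highdiff} by verifying Definition~\ref{def_car} directly, splitting according to whether $f\in L^2$ or not. Lemma~\ref{lempasL1} already handles $f\notin L^{1+\eta/2}$, so throughout I may assume $f\in L^{1+\eta/2}$.

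If $f\in L^2$, I apply Theorem~\ref{thm_highdiff} with exponent $p=2$, which gives
\begin{equation*}
\lambda(t)=E(e^{\ic tf})+c_2 t^2+(\text{terms } c_i t^i\text{ with }i\geq 3)+O(|t|^{2+\epsilon(2)}).
\end{equation*}
Since $f\in L^2$, dominated convergence yields $E(e^{\ic tf})=1+\ic tE(f)-t^2E(f^2)/2+o(t^2)$, and all the other terms on the right are $o(t^2)$. Combining these two expansions yields \eqref{eqL2} with the explicit value $c=E(f^2)-2c_2$.

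If instead $f\notin L^2$, I verify \eqref{eqPasL2}. Let $p^*=\sup\{r>0:f\in L^r\}\in[1+\eta/2,2]$. I pick $p$ slightly below $p^*$ with $f\in L^p$ and then choose $q\in(p^*,\min(2,p+\epsilon(p)))$, so that automatically $f\notin L^q$, $q\leq 2$, and $\epsilon\coloneqq p+\epsilon(p)-q>0$. This interval for $q$ is nonempty precisely because $\epsilon(\cdot)$ is bounded away from zero on compact subsets of $(1,\infty)$, so taking $p$ close to $p^*$ still ensures $p+\epsilon(p)>p^*$ with uniform margin. Theorem~\ref{thm_highdiff} then gives
\begin{equation*}
\lambda(t)=E(e^{\ic tf})+\sum_{2\leq i<p+\epsilon(p)}c_i t^i+O(|t|^{p+\epsilon(p)}),
\end{equation*}
where the polynomial sum is $O(t^2)$ for small $t$ since every exponent is at least $2$, and the remainder is $O(|t|^{q+\epsilon})$ by construction. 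This is exactly the form required in \eqref{eqPasL2}.

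The main obstacle is not in the Corollary itself, which amounts to bookkeeping on top of Theorem~\ref{thm_highdiff}, but in the borderline case $p^*=2$: one must still be able to choose $q\leq 2$ with $f\notin L^q$ and $q<p+\epsilon(p)$ simultaneously. The quantitative uniformity of $\epsilon$ on compacts, stated as part of Theorem~\ref{thm_highdiff}, is exactly what prevents a degenerate configuration in that case and makes the whole argument go through.
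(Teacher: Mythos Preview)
Your argument is correct and follows the same approach as the paper: reduce to Theorem~\ref{thm_highdiff}, treat the $L^2$ case by expanding $E(e^{\ic tf})$ to second order, and in the non-$L^2$ case absorb the polynomial $\sum_{i\geq 2} c_i t^i$ into the $O(t^2)$ slot of \eqref{eqPasL2} while the remainder furnishes the $O(|t|^{q+\epsilon})$ term. The paper organizes the non-$L^2$ case slightly differently (it fixes $\epsilon=\inf_{[1+\eta/2,2]}\epsilon(p)$ first and then picks $p$ with $f\in L^p$, $f\notin L^{p+\epsilon/2}$), but the content is identical.

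One small point of presentation: your open interval $(p^*,\min(2,p+\epsilon(p)))$ is literally empty when $p^*=2$, so your sentence ``This interval for $q$ is nonempty precisely because\dots'' is not correct in that boundary case. Your final paragraph correctly identifies the right condition (namely $q\leq 2$, $f\notin L^q$, $q<p+\epsilon(p)$), and when $p^*=2$ one simply takes $q=2$: then $f\notin L^2$ by hypothesis, and $p+\epsilon(p)>2$ for $p$ close to~$2$ by the uniform lower bound on $\epsilon(\cdot)$. Replacing the open interval by $(p^*,2]\cap(0,p+\epsilon(p))$ makes the whole argument go through uniformly without a separate remark.
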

\begin{proof}
If $f\in L^2$, then \eqref{eqdevelopthighdegree} for $p=2$
becomes $\lambda(t)=1+\ic t E(f)-c t^2/2+o(t^2)$, for some
$c\in \C$. This is the desired characteristic expansion.

Assume now $f\not\in L^2$. Let $\epsilon>0$ be the infimum of
$\epsilon(p)$ for $p\in [1+\eta/2,2]$. Let $p\geq 1+\eta/2$ be
such that $f\in L^p$ and $f\not\in L^{p+\epsilon/2}$. Then
\eqref{eqdevelopthighdegree} gives $\lambda(t)=E(e^{\ic tf})+ c
t^2 + O(|t|^{p+\epsilon})$, which is accurate.
\end{proof}
Together with Lemma \ref{lempasL1}, this concludes the proof of
Proposition \ref{propGM}.

Theorem \ref{thm_highdiff} also contains much more information,
in particular in the $L^2$ case. We will describe in Appendix
\ref{sec_app} another consequence of this very precise
expansion of the eigenvalue $\lambda(t)$, on the speed of
convergence in the central limit theorem. What is remarkable in
that theorem is that the regularity assumption on the function
need not be increased to get finer results, $\sum
m(a)Df(a)^\eta<\infty$ is always sufficient: the only
additional conditions are moment conditions.

\begin{rmk}
For $p<2$, Theorem \ref{thm_highdiff} can be proved using only
the theorem of Keller and Liverani in \cite{keller_liverani},
instead of its extension to several derivatives given in
Paragraph \ref{seckl} (but the resulting bound $\epsilon$ tends
to $0$ when $p$ tends to $2$): in the forthcoming proof, there
is no derivative involved for $p<2$. This gives a more
elementary proof of the accurate characteristic expansion for
functions $f$ not belonging to $L^q$ for some $q<2$, but the
general case (functions in $L^p$ for every $p<2$) requires the
full power of Theorem \ref{thm_1}.
\end{rmk}

We will need the following elementary lemma.
\begin{lem}
\label{lemcontrolFj}
For $j\geq 1$, define a function $F_j:\R \to \C$ by
\begin{equation}
  F_j(x)=e^{\ic x}-\sum_{k=0}^{j-1} (\ic x)^k /k!.
  \end{equation}
Let also $b\in (0,1]$. For $j\geq 1$ and $x\in \R$,
$|F_j(x)|\leq 2 |x|^{j-1+b}$. Moreover, for $j\geq 2$ and
$x,y\in \R$, $|F_j(x)-F_j(y)|\leq 2 |x-y|\cdot
\max(|x|,|y|)^{j-2+b}$.
\end{lem}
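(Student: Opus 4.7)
The plan is to base everything on the observation that for $j \geq 2$ one has $F_j'(x) = iF_{j-1}(x)$, as a direct computation from the definition shows (differentiating $e^{\ic x}-\sum_{k=0}^{j-1}(\ic x)^k/k!$ gives back $\ic e^{\ic x} -\sum_{k=1}^{j-1}\ic (\ic x)^{k-1}/(k-1)! = \ic F_{j-1}(x)$). Since $F_j(0)=0$ for $j \geq 1$, this means $F_j(x)=\ic \int_0^x F_{j-1}(t)\,\de t$ and, more generally, $F_j(x)-F_j(y)=\ic\int_y^x F_{j-1}(t)\,\de t$. Both claims of the lemma will follow from this integral formula.

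For the first estimate I proceed by induction on $j$. For the base case $j=1$, I need $|e^{\ic x}-1|\leq 2|x|^b$. I split into $|x|\leq 1$, where $|e^{\ic x}-1|\leq |x|\leq |x|^b$ because $b\leq 1$, and $|x|\geq 1$, where $|e^{\ic x}-1|\leq 2\leq 2|x|^b$ because $|x|^b\geq 1$. For the inductive step ($j\geq 2$), the integral formula together with the induction hypothesis yields
\begin{equation*}
|F_j(x)|\leq \int_0^{|x|} |F_{j-1}(t)|\,\de t \leq \int_0^{|x|} 2t^{j-2+b}\,\de t = \frac{2|x|^{j-1+b}}{j-1+b} \leq 2|x|^{j-1+b},
\end{equation*}
where the last inequality uses $j-1+b\geq 1$; this is exactly where the constant $2$ propagates without blowing up.

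For the second estimate ($j\geq 2$), I apply the same integral formula together with the first estimate now available for $F_{j-1}$: any $t$ between $x$ and $y$ satisfies $|t|\leq \max(|x|,|y|)$, hence $|F_{j-1}(t)|\leq 2\max(|x|,|y|)^{j-2+b}$, and therefore
\begin{equation*}
|F_j(x)-F_j(y)| \leq |x-y|\cdot \sup_{t\in[\min(x,y),\max(x,y)]} |F_{j-1}(t)| \leq 2|x-y|\max(|x|,|y|)^{j-2+b},
\end{equation*}
which is the claimed bound.

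There is no genuine obstacle; this is a routine estimate. The only point that requires a bit of care is keeping the constant exactly $2$ through the induction, which relies on the observation that $1/(j-1+b)\leq 1$ for $j\geq 2$ and $b\in(0,1]$, so no constants accumulate.
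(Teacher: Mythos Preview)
Your proof is correct and follows essentially the same approach as the paper: both exploit the relation $F_j'=\ic F_{j-1}$ to run an induction, and both derive the Lipschitz-type bound by controlling $\sup|F_{j-1}|$ on the segment between $x$ and $y$. The only cosmetic difference is that the paper organizes the induction as $(A_j)\Rightarrow(B_{j+1})\Rightarrow(A_{j+1})$ (recovering the pointwise bound by setting $y=0$ in the difference bound), whereas you prove the pointwise bounds $(A_j)$ first by integrating the inductive estimate directly, picking up the factor $1/(j-1+b)\leq 1$; this is a harmless reordering of the same argument.
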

\begin{proof}
Let $(A_j)$ denote the property ``for all $x\in \R$,
$|F_j(x)|\leq 2 |x|^{j-1+b}$'' and $(B_j)$ the property ``for
all $x, y$, $|F_j(x)-F_j(y)|\leq 2 |x-y|\cdot
\max(|x|,|y|)^{j-2+b}$''. We claim that $(A_j)$ holds for
$j\geq 1$, and $(B_j)$ holds for $j\geq 2$.

First, $(A_1)$ holds trivially. Moreover, if $(B_j)$ holds,
then $(A_j)$ holds by taking $y=0$. Hence, it is sufficient to
prove that $(A_j)$ implies $(B_{j+1})$ to conclude by
induction. Assume $(A_j)$. Since $F_{j+1}'=\ic F_j$, we have
  \begin{align*}
  |F_{j+1}(x)-F_{j+1}(y)| &\leq |x-y| \sup_{z\in [x,y]} |F_{j+1}'(z)|
  \leq |x-y| \sup_{z\in [x,y]} 2 |z|^{j-1+b}
  \\&
  \leq 2 |x-y| \max(|x|,|y|)^{j-1+b}.
  \end{align*}
This proves $(B_{j+1})$, as desired.
\end{proof}

\begin{proof}[Proof of Theorem~\ref{thm_highdiff}]
Fix once and for all $\eta\in (0,1]$. Let us fix $A>1$, we will
prove the theorem for $p\in [1+1/A,A]$. In this proof, $\delta$
will denote a positive quantity that may only depend on $A$,
and can change from one occurrence to the other.

The quantity $\frac{p}{1+1/5A} -\frac{p}{1+1/2A}$ is bounded
from below, uniformly for $p\in [1+1/A, A]$. Therefore, there
exists $\delta_p \in [1/5A, 1/2A]$ such that the distance from
$p/(1+\delta_p)$ to the integers is $\geq \delta$, for some
$\delta>0$. Let us fix such a $\delta_p$. Let $N\geq 2$ be the
integer such that $N> p/(1+\delta_p)>N-1$, write
$p/(1+\delta_p)=N-1+b_0$ for some $b_0\in [\delta,1-\delta]$.
Let $b_1=\dots=b_{N-1}=1$. Define numbers $p_0,\dots,p_N$ in
$[1+\delta_p,\infty]$ by $p_0=1+\delta_p$ and, for $i\geq 1$,
$p_i=p/(N-i)$. Define also operators $Q_j$ by
$Q_j(v)=\frac{\ic^j}{j!} \Transf(f^j v)$, and let
$\Delta_j(t)=\Transf_t-\Transf-\sum_{k=1}^{j-1} t^k Q_k$. Let
$\tilde \BB_j=L^{p_j}$.

We claim that the assumptions \eqref{eq:Qbound} and
\eqref{eq:Cksmooth} are satisfied for the spaces $\tilde\BB_j$.
Indeed, the choices of $b_0$ and the $p_i$s ensure that, for
$0\leq i<j\leq N$,
  \begin{equation}
  \label{bonspi}
  \frac{1}{p_i}=\frac{1}{p_j}+\frac{b(i,j)}{p}.
  \end{equation}
Therefore, if $u\in L^{p/b(i,j)}$ and $v\in L^{p_j}$, then $uv
\in L^{p_i}$. Since $f^{j-i} \in L^{p/(j-i)}$, this shows that
$Q_{j-i}$ sends $\tilde \BB^j$ to $\tilde \BB^i$ if $i>0$.

By Lemma \ref{lemcontrolFj}, for any $n\geq 1$ and $b>0$,
$|e^{\ic x}-\sum_{k=0}^{n-1} \frac{\ic^k}{k!} x^k|\leq 2
|x|^{n-1+b}$. Therefore, $| \Delta_{j-i}(t) v | \leq 2 \Transf
( |tf|^{j-i-1+b} |v|)$. Taking $b=b_i$, we obtain
  \begin{equation}
  |\Delta_{j-i}(t) v | \leq 2|t|^{b(i,j)}\Transf (|f|^{b(i,j)} |v|).
  \end{equation}
Since $|f|^{b(i,j)}$ belongs to $L^{p/b(i,j)}$, this shows
thanks to \eqref{bonspi} that $\Delta_{j-i}(t)$ sends $\tilde
\BB_j$ to $\tilde \BB_i$ with a norm at most $C |t|^{b(i,j)}$.
This is \eqref{eq:Cksmooth}.

Unfortunately, the spaces $L^{p_j}$ do not satisfy a
Lasota-Yorke type inequality \eqref{eq:lypert2}. Moreover, we
would like to gain a little bit on the integrability exponent.
Therefore, we will rather use spaces $\boL^{q,\reg}$ instead of
spaces $L^q$. To check the assumptions \eqref{eq:Qbound} and
\eqref{eq:Cksmooth}, we will apply Lemma \ref{gain_integrfort}
for some small enough $\beta\in (0, \eta]$ depending only on
$A$.

The assumptions of this lemma are satisfied for the operator
$Q_{j-i}$ ($1\leq i<j\leq N$), with $q=p_j$, $r=p_i$ and
$c=b(i,j)$ (since $f^{j-i}$ is indeed bounded by $|f|^{j-i}$,
and $D(f^{j-i})(a) \leq C Df(a) \norm{1_a
f}_{L^\infty}^{j-i-1}$). We now turn, for $0\leq i< j\leq N$,
to the operators $\Delta_{j-i}(t)$. Once again, we take
$q=p_j$, $r=p_i$ and $c=b(i,j)$. Let us show that the
assumptions of Lemma \ref{gain_integrfort} are satisfied.
First, if $\beta$ is small enough, then $r=p_i$ is larger than
$1+\beta$ (since we have chosen $p_0=1+\delta_p$ with $\delta_p
\geq 1/5A$), and $c=b(i,j)$ is larger than $\beta$ (since
$b_0\geq \delta$ by the good choice of $\delta_p$). Let us
define a function $f_{j-i}(t)=e^{\ic tf} -\sum_{k=0}^{j-i-1}
\frac{(\ic t f)^k}{k!}$, so that $\Delta_{j-i}(t)v=\Transf(
f_{j-i}(t)v)$. The following lemma shows that $f_{j-i}(t)$ is
well behaved, which is the last assumption of Lemma
\ref{gain_integrfort} we have to check.

\begin{lem}
\label{lemujOK} For any $0<b\leq 1$ and $j\geq 1$,
the function $u_j(t)=f_j(t)/(2|t|^{j-1+b})$ satisfies
$|u_j|\leq |f|^{j-1+b}$ and, for all $a\in \alpha$,
  \begin{equation}
  \label{eqborneuj}
  D u_j(t) (a) \leq  \begin{cases}
  Df(a) & \text{ if }j-1+b \leq 1,\\
  Df(a) \norm{1_a f}_{L^\infty}^{j-2+b} & \text{ if }j-1+b>1.
  \end{cases}
  \end{equation}
\end{lem}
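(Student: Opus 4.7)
The plan is to derive both assertions directly from Lemma \ref{lemcontrolFj} applied to the argument $tf$; the normalization by $2|t|^{j-1+b}$ in the definition of $u_j(t)$ is chosen precisely so that the powers of $t$ cancel.

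For the pointwise bound, estimate $(A_j)$ from Lemma \ref{lemcontrolFj} gives $|F_j(tf(x))| \leq 2|tf(x)|^{j-1+b}$, and dividing by $2|t|^{j-1+b}$ immediately yields $|u_j(t)(x)| \leq |f(x)|^{j-1+b}$.

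For the Lipschitz constant I would split according to whether $j \geq 2$ or $j = 1$. Under the hypothesis $b \in (0,1]$, this dichotomy matches exactly the two cases $j-1+b > 1$ and $j-1+b \leq 1$ appearing in the statement. When $j \geq 2$, estimate $(B_j)$ of Lemma \ref{lemcontrolFj} applied with $\alpha = tf(x)$ and $\beta = tf(y)$ yields
\begin{equation*}
|F_j(tf(x)) - F_j(tf(y))| \leq 2|t|\,|f(x)-f(y)| \cdot \max(|tf(x)|,|tf(y)|)^{j-2+b}.
\end{equation*}
After dividing by $2|t|^{j-1+b}$ every power of $t$ cancels, and $|u_j(t)(x) - u_j(t)(y)|$ is majorized by $Df(a)\norm{1_a f}_{L^\infty}^{j-2+b}\, d(x,y)$, uniformly in $t$.

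When $j=1$, the only bound available for $F_1$ is the elementary Lipschitz estimate $|e^{\ic z_1} - e^{\ic z_2}| \leq |z_1 - z_2|$, which gives $|u_1(t)(x) - u_1(t)(y)| \leq |t|^{1-b}|f(x)-f(y)|/2$. The factor $|t|^{1-b}$ does not cancel, and this is the only mildly delicate point in the proof. It is however harmless, since Lemma \ref{lemujOK} is used only for $t$ in a neighborhood of $0$: restricting to $|t| \leq 1$ forces $|t|^{1-b} \leq 1$, yielding $Du_1(t)(a) \leq Df(a)/2 \leq Df(a)$. Aside from this observation on the $j=1$ case, the lemma is pure bookkeeping on the estimates already contained in Lemma \ref{lemcontrolFj}.
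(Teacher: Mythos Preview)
Your proof is correct and follows the same approach as the paper: both use Lemma~\ref{lemcontrolFj} for the pointwise bound and for the Lipschitz estimate when $j\geq 2$, and both treat $j=1$ separately via the elementary Lipschitz bound for $e^{\ic z}$. Your treatment of the $j=1$ case is in fact more careful than the paper's: the paper simply says ``\eqref{eqborneuj} follows easily'', whereas you correctly identify the leftover factor $|t|^{1-b}$ and note that it is harmless for $|t|\leq 1$, which is all that is needed in the application.
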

\begin{proof}
We have $f_j(t)=F_j(tf)$, where $F_j$ is defined in Lemma
\ref{lemcontrolFj}. Therefore, this lemma  yields $|f_j(t)|\leq
2|tf|^{j-1+b}$ as desired. If $j=1$, $f_j(t)=e^{\ic tf}-1$,
hence \eqref{eqborneuj} follows easily. Assume now $j\geq 2$.
For any points $x,y$ in the same element $a$ of the partition
$\alpha$,
  \begin{align*}
  |f_j(t)(x)-f_j(t)(y)|&=|F_j( tf(x)) -F_j( tf(y))|
  \\&\leq 2 |tf(x)-tf(y)| \max( |tf(x)|, |tf(y)|)^{j-2+b}
  \\&\leq 2|t|^{j-1+b} Df(a) d(x,y) \norm{1_a f}_{L^\infty}^{j-2+b}.
  \end{align*}
This proves \eqref{eqborneuj} in this case.
\end{proof}

Let $\delta>0$ be given by Lemma \ref{gain_integrfort} for the
value of $\beta$ we constructed above. Decreasing $\delta$ if
necessary, we can assume $\delta\leq 1/2A$. Let also
$\reg_N=1$. Lemma \ref{gain_integrfort} (applied to the
operators $Q_1$ and $\Delta_1(t)$, on the space $\boL^{p_N,
\reg_N}$) provides us with $\reg_{N-1}=\reg'$ such that
$\norm{Q_1}_{\boL^{p_N, \reg_N}\to \boL^{p_{N-1}+\delta,
\reg_{N-1}}}$ is finite, and
  \begin{equation}
  \norm{\Delta_1(t)}_{\boL^{p_N, \reg_N}\to \boL^{p_{N-1}+\delta,
  \reg_{N-1}}} = O( |t|^{b(N-1,N)}).
  \end{equation}

Continuing inductively this process, we obtain a sequence
$\reg_N,\reg_{N-1},\dots,\reg_0$ such that, for any $1\leq i <
j\leq N$, the operator $Q_{j-i}$ maps continuously $\boL^{p_j,
\reg_j}$ to $\boL^{p_i+\delta, \reg_i}$, and such that, for any
$0\leq i< j\leq N$, the operator $\Delta_{j-i}(t)$ maps
continuously $\boL^{p_j, \reg_j}$ to $\boL^{p_i+\delta,
\reg_i}$, with a norm at most $C|t|^{b(i,j)}$.

Define a space $\BB_i=\boL^{p_i+\delta, \reg_i}$. Since $\BB_i$
is continuously contained in $\boL^{p_i, \reg_i}$, we have just
proved that the assumptions \eqref{eq:Qbound} and
\eqref{eq:Cksmooth} of Theorem \ref{thm_1} are satisfied.
Moreover, \eqref{eq:lypert1} and \eqref{eq:lypert2} for $M=1$
follow from Lemmas \ref{lemly1} and \ref{lemly2}. Therefore,
Corollary \ref{thm:kl2} applies. Since $\BB_i$ is included in
$L^{p_i+\delta}$, we obtain in particular the following: there
exist $u_1 \in L^{p_{N-1}+\delta},\dots, u_{N-1} \in
L^{p_1+\delta}$ such that the normalized eigenfunction $\xi_t$
of $\Transf_t$ satisfies
  \begin{equation}
  \label{eqexpandxitwell}
  \norm{\xi_t -1 -\sum_{k=1}^{N-1} t^k u_k}_{L^{p_0+\delta}} =O( |t|^{b(0,N) -\epsilon}),
  \end{equation}
for any $\epsilon>0$.

Let us now estimate the eigenvalue $\lambda(t)$ of $\Transf_t$
using this estimate. Let us write $\xi_t-1=\sum_{k=1}^{N-1} t^k
u_k+r_t$, where $r_t$ is an error term controlled by
\eqref{eqexpandxitwell}. By \eqref{eqCentraleLambdat},
  \begin{equation}
  \label{exprimelambdatuk}
  \begin{split}
  \lambda(t)&=E(e^{\ic tf})+\int (e^{\ic tf}-1)(\xi_t-1)
  \\&=E(e^{\ic tf})+\sum_{k=1}^{N-1} t^k \int (e^{\ic tf}-1)u_k
  + \int (e^{\ic tf}-1) r_t.
  \end{split}
  \end{equation}

Let us first estimate $\int (e^{\ic tf}-1) r_t$. We have
$p_0=1+\delta_p$ and $b(0,N)=p/(1+\delta_p)$. Let $q$ be such
that $1/(p_0+\delta)+1/q=1$, i.e., $q=(1+\delta_p+\delta)/
(\delta_p +\delta)$. Since $\delta \leq 1/2A$ and $\delta_p\leq
1/2A$, we obtain $q\geq A$. In particular, $q\geq p$.
Therefore, $|e^{\ic x}-1| \leq 2 |x|^{p/q}$ for any real $x$.
This yields
  \begin{equation}
  \norm{ e^{\ic tf} -1}_ {L^q}\leq \left(\int |e^{\ic tf}-1|^q \right)^{1/q}
  \leq C|t|^{p/q}.
  \end{equation}
Hence,
  \begin{equation}
  \label{lqmskdfjmlqs}
  \left| \int (e^{\ic tf}-1) r_t \right| \leq \norm{ e^{\ic tf} -1}_ {L^q} \norm{r_t}_{L^{p_0+\delta}}
  \leq C|t|^{ p/q + p/(1+\delta_p) -\epsilon}.
  \end{equation}
Moreover,
  \begin{equation}
  \frac{p}{q} + \frac{p}{1+\delta_p} -\epsilon =
  p\left( 1 - \frac{1}{1+\delta_p+\delta} + \frac{1}{1+\delta_p}\right)-\epsilon.
  \end{equation}
Since $\delta$ is positive, this quantity is larger than $p$ if
$\epsilon$ is small enough. Hence, \eqref{lqmskdfjmlqs} is of
the form $O(|t|^{p+\epsilon'})$ for some $\epsilon'>0$. This is
compatible with \eqref{eqdevelopthighdegree}.

We now turn to the terms $t^k \int (e^{\ic tf}-1)u_k$ in
\eqref{exprimelambdatuk}, for $0\leq k\leq N-1$. The function
$u_k$ belongs to $L^{p/k+\delta}$. Let $q$ be such that
$1/q+1/( p/k+\delta)=1$. Let also $c>0$ satisfy $qc=p$. Then
$e^{\ic tf}=\sum_{0\leq j <c} (\ic tf)^j/ j! + r_{c,t}$, where
$|r_{c,t}|\leq 2 |t|^c |f|^c$ by Lemma \ref{lemcontrolFj}. To
conclude the proof, it is sufficient to show that $t^k \int
r_{c,t} u_k=O(|t|^{p+\epsilon'})$ for some $\epsilon'>0$, since
the terms coming from the integrals $t^k \int (\ic tf)^j/ j!
\cdot  u_k$ will contribute to the polynomial in
\eqref{eqdevelopthighdegree}. We have
  \begin{align*}
  \left| t^k \int r_{c,t} u_k \right|&
  \leq |t|^k \norm{r_{c,t}}_{L^q} \norm{u_k}_{L^{p/k+\delta}}
  \leq C |t|^k \left(\int |r_{c,t}|^q \right)^{1/q}
  \\&
  \leq C |t|^{k+c} \left( \int |f|^{p}\right)^{1/q}.
  \end{align*}
Finally, $k+c=k+p-k/(1+k\delta/p)$ is strictly larger than $p$,
since $\delta>0$.
\end{proof}

\begin{rmk}
\label{rmkexpansionmu} When $f\in L^p$, $p>1$, the function
$\mu(t)=\int P_t 1$ appearing in the characteristic expansion
\eqref{eqcarac} of $f$ also satisfies an expansion
  \begin{equation}
  \mu(t)=1+\sum_{1\leq i < p} d_i t^i + O(|t|^{p-\epsilon}),
  \end{equation}
for any $\epsilon>0$. This follows from a similar (but easier)
argument, where one does not need to use the gain in the
exponent from Lemma \ref{gain_integrfort}. This expansion is
not as strong as the expansion of $\lambda(t)$ (it does not
reach the precision $O(|t|^p)$, while Theorem
\ref{thm_highdiff} gets beyond it). The reason for this
difference is that $\mu(t)$ is only expressed in spectral terms
(and Theorem \ref{thm_1} therefore gives a small loss in the
exponent), while for $\lambda(t)$ one can take advantage of the
formula \eqref{eqCentraleLambdat}.
\end{rmk}

\subsection{Last details in the \texorpdfstring{$L^2$}{L\texttwosuperior} case}
\label{par_proof_GM_cobord}
In this paragraph, we conclude the proof of
Theorem~\ref{thm:main}. By Proposition~\ref{propGM} and
Theorem~\ref{thm_main_general}, we only have to identify the
variance $\sigma^2$ when $f\in L^2$, and to strengthen the
conclusion of Theorem~\ref{thm_main_general} in the
$\sigma^2=0$ case.

\begin{lem}
\label{lemdescribevariance} Assume $f\in L^2$, and write $\tilde f=f-\int f$.
Then the asymptotic expansion of $\lambda(t)$ given by Theorem
\ref{thm_highdiff} is
  \begin{equation}
  \lambda(t)=1+\ic t E(f)-(\sigma^2 +E(f)^2) t^2/2+o(t^2),
  \end{equation}
where $\sigma^2=\int \tilde f^2+2\sum_{k=1}^\infty \int \tilde
f\cdot \tilde f\circ T^k$ (the series converges exponentially
fast).
\end{lem}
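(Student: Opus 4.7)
The task is to identify the second-order coefficient $c$ in the expansion $\lambda(t)=1+\ic t E(f)-ct^2/2+o(t^2)$ (provided by Theorem~\ref{thm_highdiff} at $p=2$) with the claimed spectral sum, and then verify exponential convergence of that sum. I would start from \eqref{eqCentraleLambdat},
\begin{equation*}
\lambda(t)=E(e^{\ic tf})+\int(e^{\ic tf}-1)(\xi_t-1),
\end{equation*}
and expand both sides. Dominated convergence yields $E(e^{\ic tf})=1+\ic tE(f)-t^2E(f^2)/2+o(t^2)$ since $f\in L^2$, so everything reduces to computing $\int(e^{\ic tf}-1)(\xi_t-1)$ to order $t^2$.

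For this, I would use the Taylor expansion of $\xi_t$ provided by Corollary~\ref{thm:kl2} in the framework of the proof of Theorem~\ref{thm_highdiff} applied at $p=2$ (so $N=2$): one obtains $\xi_t=1+tu_1+r_t$ with $u_1\in\BB_1\subset L^{2+\delta}$ and $r_t$ negligible at order $t^2$ in the appropriate interpolation norm. The function $u_1$ is the residue at $z=1$ of $(z-\Transf)^{-1}Q_1(z-\Transf)^{-1}\cdot 1$ with $Q_1 v=\ic\Transf(fv)$. Computing this residue via the spectral splitting $\Transf=P+R$ (projection on constants plus spectral-gap remainder) gives
\begin{equation*}
u_1=\ic(1-R)^{-1}(\Transf f-E(f))=\ic\sum_{k=1}^{\infty}\Transf^k\tilde f,
\end{equation*}
where the second equality uses the Neumann series on the mean-zero subspace, on which $R$ coincides with $\Transf$. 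Since $u_1$ has mean zero, the normalization $\xi_t=\tilde\xi_t/\nu(\tilde\xi_t)$ does not affect the first-order coefficient. Substituting into \eqref{eqCentraleLambdat} and using the adjoint identity $\int f\cdot\Transf^k\tilde f=\int\tilde f\cdot\tilde f\circ T^k$ (the constant part of $f$ drops out because $\int\tilde f=0$) gives
\begin{equation*}
\int(e^{\ic tf}-1)(\xi_t-1)=\ic t^2\int f\,u_1+o(t^2)=-t^2\sum_{k\geq 1}\int\tilde f\cdot\tilde f\circ T^k+o(t^2).
\end{equation*}
Matching the coefficients of $t^2$ yields $c=E(f^2)+2\sum_{k\geq 1}\int\tilde f\cdot\tilde f\circ T^k$, and subtracting $E(f)^2$ produces the announced formula for $\sigma^2$.

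For the exponential convergence I would exploit the interpolation spaces of Section~\ref{sec_proof_GM}. First, $f\in\boL^{p,\reg}$ for some $p<2$, $\reg>0$: using the cover $A_K=\bigcup\{a\in\alpha:Df(a)+\|1_af\|_{L^\infty}\leq K\}$, the function $f\,1_{A_K}$ has $\boL$-norm $O(K)$, while \eqref{sumfbounded} gives $m(X\setminus A_K)=O(K^{-\eta})$, so the H\"older inequality $\|f\,1_{X\setminus A_K}\|_{L^p}\leq\|f\|_{L^2}\,m(X\setminus A_K)^{1/p-1/2}$ makes the $L^p$-part shrink polynomially in $K$. The spectral-gap corollary following Lemma~\ref{lemly1} then gives $\|\Transf^k\tilde f\|_{\boL^{p,\reg}}\leq C\theta^k$ for some $\theta<1$; combined with a $k$-adapted interpolation splitting of $\tilde f$ itself into a $\boL$-part and a small $L^p$-tail, careful H\"older pairings on each of the resulting cross-terms yield an exponential bound $|\int\tilde f\cdot\tilde f\circ T^k|\leq C\theta_1^k$ for some $\theta_1<1$. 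The main obstacle is precisely this last bookkeeping step: the direct H\"older pairing of $\Transf^k\tilde f\in L^p$ ($p<2$) with $\tilde f\in L^2$ requires the latter to lie in $L^{p'}$ for some $p'>2$, which is not available, so the interpolation splitting must be tuned to $k$ in order to balance the growing $\boL$-norm against the shrinking $L^p$-tail.
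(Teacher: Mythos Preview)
Your identification of the $t^2$ coefficient via \eqref{eqCentraleLambdat}, the residue computation $u_1=\ic\sum_{k\ge1}\Transf^k\tilde f$, and the observation that $\int u_1=0$ makes the normalization harmless, all match the paper's argument essentially line for line.

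The exponential-convergence part, however, is where you diverge from the paper and where your sketch has a genuine gap. Working with $\tilde f\in\boL^{p,\reg}$ for some $p<2$ gives exponential decay of $\Transf^k\tilde f$ only in $L^p$ with $p<2$. To pair this with $\tilde f\in L^2$ by H\"older you would need $\tilde f\in L^{p'}$ with $p'>2$, which you do not have; and splitting $\tilde f=v+w$ via the $\boL^{p,\reg}$ decomposition does not help, because the tail $w=\tilde f-v$ has $\|w\|_{L^2}\le\|\tilde f\|_{L^2}+\|v\|_{L^\infty}$, which \emph{grows} with the splitting parameter rather than shrinks. No amount of ``$k$-adapted tuning'' balances a term that carries a constant lower bound. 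The obstacle you flag is real, and the resolution you propose does not close it.

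The paper avoids this entirely by observing that the framework of Theorem~\ref{thm_highdiff} already places $\Transf\tilde f$ (not $\tilde f$) in the better space $\boL^{2+\delta,\reg}$: this is precisely the gain furnished by Lemma~\ref{gain_integrfort}, and it is why $u_1\in\BB_1=\boL^{2+\delta,\reg}$ in the $N=2$ scale. The spectral gap on $\boL^{2+\delta,\reg}$ then gives $\|\Transf^{k}\tilde f\|_{\boL^{2+\delta,\reg}}\le C\theta^{k}$, hence $\|\Transf^k\tilde f\|_{L^2}\le C\theta^k$, and a single Cauchy--Schwarz $\bigl|\int\tilde f\,\Transf^k\tilde f\bigr|\le\|\tilde f\|_{L^2}\|\Transf^k\tilde f\|_{L^2}$ finishes immediately. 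In short: apply the gain lemma once to jump past the $L^2$ threshold, rather than trying to squeeze the pairing from below it.
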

\begin{proof}
In the expansion \eqref{eqdevelopthighdegree} of $\lambda(t)$,
the term for $i=2$ comes only, in the proof, from the integral
$\int \ic tf u_1$, where
  \begin{equation}
  u_1= \frac{1}{2\ic \pi} \int_{|z-1|=c} (z-\Transf)^{-1} Q_1 (z-\Transf)^{-1} 1 \dd z,
  \end{equation}
where $Q_1(v)=\Transf( \ic f v)$ (and the integral is
converging in a space $\boL^{2+\delta,\reg}$ for some
$\delta>0$ and $\reg>0$). Let us identify $u_1$. We have
$(z-\Transf)^{-1}1= 1/(z-1)$. Moreover, if $E$ is the space of
constant functions and $F$ the space of functions with
vanishing integral, then $(z-\Transf)^{-1}$ is the
multiplication by $1/(z-1)$ on $E$, while
$(z-\Transf)^{-1}v=\sum_{k=0}^\infty z^{-k-1} \Transf^k v$ for
$v\in F$ (the series converging exponentially fast in
$\boL^{2+\delta, \reg}$, and in particular in $L^2$). Writing
$\Transf f$ as $(\int f) +\Transf \tilde f\in E\oplus F$, we
obtain
  \begin{equation}
  u_1= \frac{1}{2\ic \pi} \int_{|z-1|=c} \frac{\ic \int f}{ (z-1)^2}\dd z
  +\sum_{k=0}^\infty \frac{1}{2\ic \pi}\int_{|z-1|=c} \frac{z^{-k-1}}{z-1} \ic \Transf^{k+1} \tilde f \dd z.
  \end{equation}
Since $1/(z-1)^2$ has a vanishing residue at $z=1$, while
$z^{-k-1}/(z-1)$ has a residue equal to $1$, this gives
  \begin{equation}
  u_1= \ic \sum_{k=0}^\infty \Transf^{k+1} \tilde f.
  \end{equation}

We obtain from \eqref{eqdevelopthighdegree}
  \begin{align*}
  \lambda(t)&=E(e^{\ic tf})-t^2 \sum_{k=1}^\infty \int f \Transf^k \tilde f+O(|t|^{2+\epsilon})
  \\&
  =1+\ic t E(f)-t^2 \int f^2 /2 -t^2 \sum_{k=1}^\infty \int \tilde f \cdot \tilde f\circ T^k+o(t^2)
  \\&=1+\ic t E(f) - (\sigma^2+E(f)^2)t^2/2+o(t^2).
  \qedhere
  \end{align*}
\end{proof}

To conclude, it is sufficient to prove that, if $\sigma^2$
vanishes, then $f$ is a bounded coboundary. A similar result is
proved in \cite[Corollary 2.3]{aaronson_denker}, and we will
essentially reproduce the same argument for completeness.

\begin{lem}
Assume $f\in L^2$ is such that $\sigma^2$ (given by
Lemma~\ref{lemdescribevariance}) vanishes. Then there exist a
bounded function $u$ and a real $c$ such that $f=u-u\circ T+c$.
\end{lem}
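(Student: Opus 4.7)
The plan is to combine an $L^2$-boundedness estimate for $S_n\tilde f$ (obtained from $\sigma^2=0$) with the Aaronson-Weiss tightness criterion to produce a measurable coboundary representation, and then upgrade the measurable solution to a bounded one via a Gordin-style martingale-coboundary decomposition.

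First I would establish that $\sigma^2=0$ implies $\sup_n\|S_n\tilde f\|_{L^2}<\infty$. Expanding by $T$-invariance,
\begin{equation*}
\|S_n \tilde f\|_{L^2}^2 = n\sigma^2 - 2\sum_{k=1}^{n-1} k \int \tilde f\cdot\tilde f\circ T^k - 2n\sum_{k\geq n}\int\tilde f\cdot\tilde f\circ T^k,
\end{equation*}
so I need $\int\tilde f\cdot\tilde f\circ T^k = \int \Transf^k\tilde f\cdot\tilde f$ to decay exponentially. This is already implicit in Lemma~\ref{lemdescribevariance}: by Lemma~\ref{gain_integrfort}, $\Transf\tilde f$ lies in some $\boL^{2+\delta,\reg}$, and Lemmas~\ref{lemly1}--\ref{lemly2} give a spectral gap of $\Transf$ on that space, so $\Transf^{k}\tilde f$ decays geometrically in $\boL^{2+\delta,\reg}\subset L^2$. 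With $\sigma^2=0$ only the two correction sums remain, both bounded uniformly in $n$.

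Second, Chebyshev's inequality turns this $L^2$-bound into tightness of $\{S_n\tilde f-0\}_n$. Since $T$ is mixing, the Aaronson-Weiss theorem \cite{aaronson_weiss} (the same input invoked earlier to prove that $B_n\to\infty$) then yields a measurable $u$ and a constant $c$ with $f=u-u\circ T+c$ almost everywhere; integrating forces $c=E(f)$, so $\tilde f=u-u\circ T$.

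Third and most delicate, I must upgrade $u$ from measurable to bounded. The natural candidate is $h\coloneqq-\sum_{k=1}^\infty \Transf^k\tilde f$, a series that converges in $\boL^{2+\delta,\reg}$ by the same spectral argument as above. A short martingale-coboundary computation in the style of Gordin gives $\tilde f = (h-h\circ T) + M$ with $M\coloneqq\tilde f-(h-h\circ T)$ satisfying $\Transf M=0$; the orthogonality of the martingale sum $\sum_{k=0}^{n-1}M\circ T^k$ to the telescoping part then yields $\sigma^2=\|M\|_{L^2}^2$, so $\sigma^2=0$ forces $M=0$ and $\tilde f=h-h\circ T$. By ergodicity of $T$, any two measurable solutions of the cohomological equation differ by a constant, so $u=h+\text{const}$, and it is enough to prove $h\in L^\infty$. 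This is where the bulk of the work lies: the footnote in the introduction warns that $\Transf f$ need not sit in $\boL$, so $h$ being the sum of a rapidly convergent series in $\boL^{2+\delta,\reg}$ does not immediately yield boundedness. To finish I would exploit the regularizing action of $\Transf$ under the Gibbs-Markov distortion bound \eqref{eq_bounds_jac} together with Lemma~\ref{gain_integrfort}, showing that each term $\Transf^k\tilde f$ for $k\geq 2$ lies in progressively better interpolation spaces, so that the tail $\sum_{k\geq 2}\Transf^k\tilde f$ actually converges in $\boL$, while the single term $\Transf\tilde f$ is handled directly on each partition element using \eqref{eq_bounds_jac}.

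The main obstacle is precisely this last boundedness step; the preceding parts are essentially the standard reproduction of the Aaronson-Denker argument, but the weak regularity assumption $\sum m(a)Df(a)^\eta<\infty$ prevents the direct appeal to the spectral gap on $\boL$ that makes things easy in \cite{aaronson_denker}, and one must route everything through the interpolation spaces $\boL^{p,\reg}$ and the integrability gain of Lemma~\ref{gain_integrfort}.
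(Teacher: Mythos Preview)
Your first two steps---the $L^2$-boundedness of $S_n\tilde f$ from exponential decay of correlations, and the appeal to \cite{aaronson_weiss} (the paper cites Leonov's theorem here, which is the same mechanism) to get a measurable coboundary $\tilde f=u-u\circ T$---match the paper exactly. The Gordin detour identifying $u$ with $h=-\sum_{k\geq 1}\Transf^k\tilde f$ up to a constant is also correct, though it is not needed: once you have \emph{some} measurable $u$, the question is simply whether that $u$ is bounded.

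The gap is in your boundedness step. Your plan to handle $\Transf\tilde f$ ``directly on each partition element using \eqref{eq_bounds_jac}'' cannot work: the bound \eqref{eq_bounds_jac} only gives $|\Transf\tilde f(x)|\leq C\sum_a m(a)\|1_a\tilde f\|_{L^\infty}$, and under the hypothesis $\sum m(a)Df(a)^\eta<\infty$ with $\eta<1$ this series may diverge. Indeed, the footnote in the introduction exhibits an $f$ lying in every $L^p$ for which $\Transf f$ is not even locally H\"older; a moment's computation in that same example shows $\Transf f$ is unbounded. Likewise, Lemma~\ref{gain_integrfort} only raises the exponent $p$ in the interpolation scale $\boL^{p,\reg}$ by a fixed $\delta$; iterating it (which, moreover, applies to $v\mapsto\Transf(uv)$ and not to bare powers of $\Transf$) never produces an element of $\boL$ or of $L^\infty$. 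So the claimed convergence of the tail in $\boL$ is unsupported.

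The paper takes a completely different route to boundedness, one that sidesteps any direct estimate of $h$. From $\tilde f=u-u\circ T$ one computes $\Transf_t(e^{-\ic tu})=\Transf(e^{\ic t\tilde f}e^{-\ic tu})=\Transf(e^{-\ic tu\circ T})=e^{-\ic tu}$, so $e^{-\ic tu}$ is an $L^\infty$ eigenfunction of $\Transf_t$ for the eigenvalue $1$. Since $\Transf_t$ satisfies a Lasota--Yorke inequality on $\boL$ (by \eqref{eqLY} and \eqref{eq:EstimeClose}), the Ionescu--Tulcea--Marinescu theorem forces $e^{-\ic tu}\in\boL$, hence continuous, for every small $t$. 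A short real-variable lemma (Lemma~\ref{continuautomatique}) then shows that $u$ itself is continuous. Finally, continuity plus the big image/preimage structure of the Gibbs--Markov map yields uniform boundedness of $u$. This argument never tries to estimate $\Transf^k\tilde f$ in $L^\infty$ and works under the weak regularity hypothesis without modification.
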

\begin{proof}
Replacing $f$ with $\tilde f=f-\int f$, we can assume without
loss of generality that $\int f=0$.

The exponential convergence of $\int f\cdot f\circ T^k$ to $0$
ensures that $\int (S_n f)^2 = n\sigma^2+O(1)$. Therefore, if
$\sigma^2=0$, then $S_n f$ is bounded in $L^2$. By Leonov's
Theorem (see e.g.~\cite{aaronson_weiss}), this implies that $f$
is an $L^2$ coboundary: there exists $u\in L^2$ such that
$f=u-u\circ T$ almost everywhere. Then
  \begin{equation}
  \Transf_t(e^{-\ic tu})=\Transf(e^{\ic tf} e^{-\ic tu})=\Transf(e^{-\ic tu\circ
T})=e^{-\ic tu}.
  \end{equation}
By \cite{ionescu_tulcea_marinescu}, this yields $e^{-\ic tu}
\in \boL$. In particular, the function $e^{-\ic tu}$ is
continuous for any small enough $t$.
Lemma~\ref{continuautomatique} shows that $u$ itself is
continuous. In particular, there exists a cylinder
$[b_0,\dots,b_k]$ on which $u$ is bounded. Since $f$ is bounded
on each element of the partition $\alpha$, the equation
$f=u-u\circ T$ implies that $u$ is bounded on $b_k$. Together
with the topological transitivity of $T$, we obtain that $u$ is
bounded on each $a\in \alpha$.

Let $\{a_1,\dots,a_n\}$ be a finite subset of $\alpha$ such
that each element of $\alpha$ contains the image of one of the
$a_i$s (it exists by the big preimage property). Let
$a\in\alpha$, choose $i$ such that $a\subset T(a_i)$, then the
equation $f=u-u\circ T$ gives $\norm{u 1_a}_{L^\infty} \leq
\norm{(|f|+|u|)1_{a_i}}_{L^\infty}$. This shows that $u$ is
uniformly bounded, as desired.
\end{proof}

In fact, a slightly refined version of the same argument also
shows that $u$ is H\"{o}lder continuous.

\begin{lem}
\label{continuautomatique}
Let $u$ be a real function on a metric space $X$, and assume
that $e^{\ic tu}$ is continuous for $t\in [a,b]$ a nontrivial
interval of $\R$. Then $u$ is continuous.
\end{lem}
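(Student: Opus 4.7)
\medskip

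\noindent\textbf{Proof plan.} Fix $x_0\in X$ and a sequence $x_n\to x_0$. By continuity of $e^{\ic t u(\cdot)}$ for every $t\in[a,b]$, we have $e^{\ic t u(x_n)}\to e^{\ic t u(x_0)}$ for each such $t$. Setting $v_n=u(x_n)-u(x_0)$, this rewrites as
\begin{equation*}
e^{\ic t v_n}\to 1 \quad\text{for every } t\in[a,b].
\end{equation*}
The plan is to show that this pointwise convergence on a nontrivial interval forces $v_n\to 0$, which is exactly continuity of $u$ at $x_0$.

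Suppose for contradiction that $v_n\not\to 0$. Passing to a subsequence, either $v_n\to L$ for some $L\neq 0$, or $|v_n|\to\infty$. In the first case, taking limits in $e^{\ic t v_n}\to 1$ gives $e^{\ic t L}=1$ for every $t\in[a,b]$, which is absurd since a nonzero $L$ cannot satisfy this on an interval of positive length.

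The main obstacle is the second case, where the values $v_n$ escape to infinity; there the pointwise convergence is compatible with large oscillations, so one cannot take termwise limits naively. The way around this is to integrate in $t$. By dominated convergence (the integrands are bounded by $1$),
\begin{equation*}
\int_a^b e^{\ic t v_n}\dd t \longrightarrow \int_a^b 1\dd t = b-a > 0.
\end{equation*}
On the other hand, a direct computation gives
\begin{equation*}
\left|\int_a^b e^{\ic t v_n}\dd t\right| = \left|\frac{e^{\ic b v_n}-e^{\ic a v_n}}{\ic v_n}\right| \leq \frac{2}{|v_n|} \longrightarrow 0,
\end{equation*}
a contradiction. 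This rules out the second case and completes the proof. Continuity of $u$ follows because $x_0$ and the sequence $x_n$ were arbitrary.
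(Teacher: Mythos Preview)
Your proof is correct. The reduction to the sequence claim is the same as in the paper, but the way you dispose of the unbounded case is genuinely different and more elementary.

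The paper argues via Baire category: it sets $A_N=\{t\in[a,b]\st \forall n\geq N,\ \dist(tv_n,2\pi\Z)\leq 1\}$, observes that these closed sets cover $[a,b]$, and finds by Baire an $A_N$ containing a nontrivial interval $[c,d]$. On that interval, $t\mapsto tv_n$ is continuous and trapped in $2\pi\Z+[-1,1]$, hence stays in a single component; this gives $|v_n|(d-c)\leq 2$, so $(v_n)$ is bounded. One then concludes by looking at cluster points, exactly as in your first case.

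Your approach replaces the Baire step by an integration (Riemann--Lebesgue) argument: dominated convergence forces $\int_a^b e^{\ic t v_n}\dd t\to b-a$, while the explicit antiderivative bounds the same integral by $2/|v_n|$, killing any subsequence escaping to infinity. This is shorter, avoids Baire entirely, and uses only elementary calculus. The paper's argument, on the other hand, is a bit more robust in that it establishes boundedness of the full tail $(v_n)_{n\geq N}$ at once rather than working subsequence by subsequence; but for the present lemma nothing is gained by this.

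One cosmetic remark: your sentence ``passing to a subsequence, either $v_n\to L\neq 0$ or $|v_n|\to\infty$'' hides a standard two-step extraction (first $|v_{n_k}|\geq\varepsilon$, then either Bolzano--Weierstrass or unboundedness). It is fine as written, but if you want to be fully explicit you might add a line.
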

\begin{proof}
We will show that, if $v_n$ is a real sequence such that
$e^{\ic tv_n}$ converges to $0$ for any $t\in [a,b]$, then
$v_n\to 0$. Applying this result to $v_n=u(x_n)-u(x)$ when
$x_n\to x$, this gives the required continuity of $u$ at $x\in
X$, for any $x$.

Let $A_N=\{ t\in [a,b] \st \forall n\geq N,
\dist(tv_n,2\pi\Z)\leq 1\}$. The set $A_N$ is a closed subset
of $[a,b]$, and $\bigcup A_N=[a,b]$. By Baire's Theorem, there
exists a set $A_N$ containing a nontrivial interval $[c,d]$.
For $n\geq N$ and $t\in [c,d]$, the number $tv_n$ belongs to
$2\pi\Z+[-1,1]$, and depends continuously on $t$. It has to
stay in the same connected component of $2\pi\Z+[-1,1]$,
therefore $| cv_n-dv_n|\leq 2$. This shows that $v_n$ is
bounded.

Any cluster value $v$ of $v_n$ satisfies $e^{\ic tv}=0$ for any
$t\in [a,b]$, hence $v=0$.
\end{proof}

\appendix

\section{The Berry-Esseen theorem for Gibbs-Markov maps}
\label{sec_app}

In this appendix, we obtain necessary and sufficient conditions
for the Berry-Esseen theorem, for Gibbs-Markov maps.

\begin{thm}
\label{thm_be} Let $T:X \to X$ be a probability preserving
mixing Gibbs-Markov map, and let $f:X\to \R$ satisfy
$\sum_{a\in\alpha} m(a)Df(a)^\eta<\infty$ for some $\eta\in
(0,1]$. Assume $f\in L^2$ and $E(f)=0$, and $S_n f/\sqrt{n}\to
\boN(0,\sigma^2)$ with $\sigma^2>0$. Let
  \begin{equation}
  \Delta_n\coloneqq \sup_{x\in \R} \left|m\{ S_n f/\sqrt{n} < x\} - P ( \boN(0,\sigma^2)< x)\right|.
  \end{equation}
Let $\delta\in (0,1)$. Then $\Delta_n= O( n^{-\delta/2})$ if
and only if $E( f^2 1_{|f|> x})=O(x^{-\delta})$ when
$x\to\infty$.

Moreover, $\Delta_n= O( n^{-1/2})$ if and only if $E( f^2
1_{|f|> x})=O(x^{-1})$ when $x\to\infty$, and $E(f^3
1_{|f|<x})$ is uniformly bounded.
\end{thm}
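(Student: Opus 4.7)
The proof combines the sharp expansion of the leading eigenvalue $\lambda(t)$ from Theorem~\ref{thm_highdiff} with Esseen's smoothing inequality, together with a Tauberian argument for the converse direction.

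\emph{Sufficiency.} Assuming $E(f^2 1_{|f|>x}) = O(x^{-\delta})$, I would first translate this tail condition into a characteristic function expansion. The condition implies $f \in L^p$ for all $p<2+\delta$, so Theorem~\ref{thm_highdiff} gives $\lambda(t) = E(e^{\ic tf}) + c_2 t^2 + O(|t|^{2+\delta'})$ for some $\delta' > \delta$. Using the layer-cake identity $E(g(|f|)) = \int_0^\infty g'(y) m(|f|>y)\dd y$ together with $|1-\cos y - y^2/2| \leq \min(y^2, |y|^3/6)$ and $|\sin y - y| \leq \min(2|y|, |y|^3/6)$, a direct computation splitting at $|f| = 1/|t|$ shows $E(e^{\ic tf}) = 1 - E(f^2) t^2/2 + O(|t|^{2+\delta})$, using $E(f)=0$. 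Combined with the identification $c_2 = -\sum_{k\geq 1} \int f\cdot f\circ T^k$ coming from Lemma~\ref{lemdescribevariance}, this gives
\begin{equation*}
\lambda(t) = 1 - \sigma^2 t^2/2 + O(|t|^{2+\delta}).
\end{equation*}
Esseen's inequality then bounds $\Delta_n$ by $C\int_{-T}^T |\phi_n(t) - e^{-\sigma^2 t^2/2}|/|t|\,\dd t + C/T$, where $\phi_n(t) = \mu(t/\sqrt n)\lambda(t/\sqrt n)^n + O(\gamma^n)$. Taking $T = cn^{\delta/2}$ and splitting the range at $|t| = \epsilon\sqrt n$ (using exponential decay of the spectral radius of $\Transf_t$ for $t$ bounded away from $0$ to handle the outer part) yields $\Delta_n = O(n^{-\delta/2})$.

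\emph{Necessity.} From $\Delta_n = O(n^{-\delta/2})$, I would recover the tail bound by reversing this chain. A standard smoothing argument (convolving the law of $S_n f/\sqrt n$ with a smooth compactly supported density and using Parseval) converts the Kolmogorov distance bound into an integrated bound on $\phi_n(t) - e^{-\sigma^2 t^2/2}$ over a window of size $\sim n^{\delta/2}$. For fixed small $s$, choosing $n$ such that $s = t/\sqrt n$ lies in this window and inverting the spectral identity for $\phi_n$ yields $\lambda(s)^n = e^{-\sigma^2 s^2 n/2}(1+\text{small})$ with error of order $n^{-\delta/2}$; taking $n$-th roots gives $\lambda(s) = 1 - \sigma^2 s^2/2 + O(|s|^{2+\delta})$. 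Subtracting the easier half of Theorem~\ref{thm_highdiff}, namely $\lambda(s) - E(e^{\ic sf}) = c_2 s^2 + o(s^2)$, transfers this to $\Phi(s)\coloneqq E(1-\cos(sf)) = E(f^2)s^2/2 + O(|s|^{2+\delta})$. A one-sided Tauberian inversion applied to the Fourier representation $\Phi(s) = \int_0^\infty s\sin(sy) m(|f|>y)\dd y$ then yields $m(|f|>x) = O(x^{-2-\delta})$, whence $E(f^2 1_{|f|>x}) = O(x^{-\delta})$ by the layer-cake formula.

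\emph{The borderline $\delta=1$ case.} The same scheme works, but at the borderline one must separately track the imaginary part of $E(e^{\ic sf})$: its leading correction is $-\ic s^3 E(f^3 1_{|f|<1/|s|})/6$ plus an $O(|s|^3)$ tail contribution, so it is $O(|s|^3)$ precisely when $E(f^3 1_{|f|<x})$ is uniformly bounded, accounting for the extra hypothesis in the statement. The main obstacle is the Tauberian step in the necessity direction: passing from origin-asymptotics of $\Phi(s)$ to a pointwise tail bound on $m(|f|>y)$ is not automatic in general, but the upper-bound form $m(|f|>x) = O(x^{-2-\delta})$, as opposed to an exact equivalent, can be extracted without regular-variation hypotheses thanks to monotonicity of the tail.
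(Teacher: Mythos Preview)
Your sufficiency argument is essentially sound and close to the paper's, though the paper works with the integrated condition $\int_0^x t^2|\psi(t)|\,\de t=O(x^{3+\delta})$ from \cite{ibragimov_linnik} rather than your pointwise $\lambda(t)=1-\sigma^2 t^2/2+O(|t|^{2+\delta})$; either formulation feeds into Esseen's inequality with $T=\rho\sqrt{n}$. Your phrase about ``splitting the range at $|t|=\epsilon\sqrt n$ using exponential decay of the spectral radius for $t$ bounded away from $0$'' is a red herring: with $T=cn^{\delta/2}\ll\epsilon\sqrt n$ there is no outer range, and the paper never invokes such a spectral gap either.

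The necessity direction, however, has a real gap. From a Kolmogorov bound $\Delta_n=O(n^{-\delta/2})$ one cannot extract a \emph{pointwise} estimate $\lambda(s)=1-\sigma^2 s^2/2+O(|s|^{2+\delta})$ by ``taking $n$-th roots'': Parseval-type identities only give integrated control of $\phi_n(t)-e^{-\sigma^2 t^2/2}$ against a test function, and pointwise values of characteristic functions are not controlled by $\Delta_n$ with the right exponent (a Lipschitz argument loses a factor $2$ in the rate at best). The paper circumvents this by \emph{symmetrizing}: it passes to $|E(e^{\ic tS_nf/\sqrt n})|^2$ and $|E(e^{\ic tf})|^2=e^{-\sigma_0^2 t^2+t^2\gamma_0(t)}$ with $\gamma_0$ real, uses the Parseval pairing of $te^{-t^2/2}$ with $xe^{-x^2/2}$ to obtain the \emph{integrated} bound $\int_0^x t^2|\gamma_0(t)|\,\de t=O(x^{3+\delta})+O(x^{q+1})$, and then invokes the Ibragimov--Linnik converse (equation \eqref{eq_IL}) to deduce the tail estimate.

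There is a second issue you elide. Writing ``$\lambda(s)-E(e^{\ic sf})=c_2s^2+o(s^2)$'' and subtracting destroys the $O(|s|^{2+\delta})$ information: you need the quantitative form $\lambda(s)-E(e^{\ic sf})=c_2s^2+O(|s|^{2+\epsilon(p)})$ from Theorem~\ref{thm_highdiff}, and a priori $\epsilon(2)$ may be smaller than $\delta$. The paper handles this by a bootstrapping step: one chooses $p\in[2,3]$ with $f\in L^p$, $f\notin L^{p+\epsilon/2}$, obtains the tail bound with exponent $\min(\delta,q-2)$ where $q=\min(p+\epsilon,3)$, and shows that $q-2<\delta$ would force $f\in L^{p+\epsilon/2}$, a contradiction. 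Your Tauberian step from $\Phi(s)=E(f^2)s^2/2+O(|s|^{2+\delta})$ to the tail bound is fine in isolation (positivity of $\cos y-1+y^2/2$ does the job), but you do not reach that hypothesis.
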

When one considers i.i.d.~random variables instead of Birkhoff
sums, this theorem for $\delta<1$ is proved in \cite[Theorem
3.4.1]{ibragimov_linnik}, and the proof for $\delta=1$ is given
in \cite{ibragimov}. For the proof in the dynamical setting, we
will essentially follow the same strategy as in the
i.i.d.~case, the additional crucial ingredient being the
estimate on $\lambda(t)$ provided by Theorem
\ref{thm_highdiff}. We will only give the proof for $\delta<1$,
since the proof for $\delta=1$ is very similar following the
arguments of \cite{ibragimov}.

\begin{proof}[Proof of the necessity in Theorem \ref{thm_be}]
Assuming $\Delta_n=O(n^{-\delta/2})$, we will prove $E( f^2
1_{|f|> x})=O(x^{-\delta})$. This is trivial if $f\in L^3$, so
we can assume this is not the case. In this proof, $\epsilon$
will denote the minimum of $\epsilon(p)$ given by Theorem
\ref{thm_highdiff} for $p\in [2,3]$. Consider $p\in [2,3]$ such
that $f\in L^p$ and $f\not\in L^{p+\epsilon/2}$, and let
$q=\min(p+\epsilon,3)$. Hence, $\lambda(t)=E(e^{\ic tf})+
ct^2+O(|t|^q)$ for some $c\in \R$. It will be more convenient
to write this estimate as follows:
  \begin{equation}
  \label{eqdecritlambd}
  \lambda(t)=E(e^{\ic tf}) e^{ct^2 + t^2 \phi(t)}\text{ with } \phi(t)=O(|t|^{q-2}).
  \end{equation}

Let $W$ be the symmetrization of $f$, i.e., the difference of
two independent copies of $f$. Its characteristic function is
$E(e^{\ic tW})=|E(e^{\ic tf})|^2$. Let us write $E(e^{\ic
tW})=e^{-\sigma_0^2 t^2 + t^2 \gamma_0(t)}$ where
$\sigma_0^2=E(f^2)$ and $\gamma_0$ is a real function defined
on a neighborhood of $0$. \cite[Paragraph
III.4]{ibragimov_linnik} proves the following fact:
  \begin{multline}
  \label{eq_IL}
  \text{If }\int_0^x t^2 |\gamma_0(t)|=O(x^{3+\tilde\delta}),\
  0<\tilde\delta<1,
  \text{ when }x\to 0,\\
  \text{ then }E( f^2
  1_{|f|> x})=O(x^{-\tilde\delta}) \text{ when }x\to+\infty.
  \end{multline}
To conclude, it is therefore sufficient to estimate $\int t^2
|\gamma_0(t)|$.

Let $H$ denote the distribution function of
$\boN(0,2\sigma^2)$, and $F_n$ the distribution function of the
difference of two independent copies of $S_n f/\sqrt{n}$. From
the assumption $\Delta_n=O(n^{-\delta/2})$, it follows that
$\sup_{x\in\R} |H(x)-F_n(x)|\leq C n^{-\delta/2}$. Let $h(t)$
and $f_n(t)$ be the characteristic functions of $H$ and $F_n$,
i.e., $h(t)=e^{-\sigma^2 t^2}$ and $f_n(t)=|E(e^{\ic tS_n
f/\sqrt{n}})|^2$. Integrating by parts the equality
$f_n(t)-h(t)=\int e^{\ic tx} \dd( F_n(x)-H(x))$, we obtain
  \begin{equation}
  \frac{f_n(t)-h(t)}{\ic t}=\int e^{\ic tx} (F_n(x)-H(x)) \dd x.
  \end{equation}
This shows that the $L^2$ functions $(f_n(t)-h(t))/\ic t$ and
$F_n-H$ are Fourier transforms of one another. The functions $t
e^{-t^2/2}$ and $-\ic x e^{-x^2/2}/ \sqrt{2\pi}$ are also
Fourier transforms of one another. Hence, Parseval's theorem
gives
  \begin{equation}
  \int \frac{f_n(t)-h(t)}{t} \cdot t e^{-t^2/2}\dd t= C \int (F_n(x)-H(x)) x e^{-x^2/2}
  = O (n^{-\delta/2}).
  \end{equation}
Since $\int_{|t|\geq \log n} e^{-t^2/2}\dd t=O(n^{-\delta/2})$,
this yields
  \begin{equation}
  \label{qksjlfmkqsjdf}
  \int_{|t|\leq \log n} (f_n(t)-h(t)) e^{-t^2/2} \dd t=O (n^{-\delta/2}).
  \end{equation}

The characteristic expansion of $f$ gives
  \begin{equation*}
  f_n(t)=|E(e^{\ic tS_n f/\sqrt{n}})|^2= \left| \lambda\left(\frac{t}{\sqrt{n}}\right)\right|^{2n}
  \left|\mu\left(\frac{t}{\sqrt{n}}\right)\right|^2+\epsilon_n(t),
  \end{equation*}
where $\epsilon_n(t)$ tends exponentially fast to $0$ (by
Theorem \ref{thm:kato}), and the function $\mu$ satisfies
$\mu(t)=1+O(t)$ (by Remark \ref{rmkexpansionmu}). Let
$g_n(t)=\left|
\lambda\left(\frac{t}{\sqrt{n}}\right)\right|^{2n}$, then
$\int_{|t|\leq \log n} (f_n(t)-g_n(t)) e^{-t^2/2} \dd
t=O(n^{-1/2})$. Therefore, \eqref{qksjlfmkqsjdf} gives
  \begin{equation}
  \int_{|t|\leq \log n} (g_n(t)-h(t)) e^{-t^2/2} \dd t=O (n^{-\delta/2}).
  \end{equation}
Moreover, by \eqref{eqdecritlambd}
  \begin{align*}
  g_n(t)&=|E(e^{\ic t f/\sqrt{n}})|^{2n} e^{2ct^2 +2t^2 \Real \phi(t/\sqrt{n})}
  \\&=e^{-\sigma_0^2 t^2 +t^2 \gamma_0(t/\sqrt{n}) + 2ct^2 + 2t^2 \Real \phi(t/\sqrt{n})}
  \\&=e^{-\sigma^2 t^2 + t^2 \gamma_0(t/\sqrt{n}) + 2t^2 \Real \phi(t/\sqrt{n})}.
  \end{align*}
Let $h_n(t)=e^{-\sigma^2 t^2 + t^2 \gamma_0(t/\sqrt{n})}$.
Since $\phi(t)=O(|t|^{q-2})$ by \eqref{eqdecritlambd}, we have
$\int_{|t|\leq \log n} (g_n(t)-h_n(t))  e^{-t^2/2} \dd t =
O(n^{- (q-2)/2})$. Hence,
  \begin{equation}
  \int_{|t|\leq \log n} (h_n(t)-h(t)) e^{-t^2/2} \dd t=O(n^{-\delta/2})+O(n^{-(q-2)/2}).
  \end{equation}
Since $h_n(t)-h(t)=e^{-\sigma^2 t^2} (e^{t^2
\gamma_0(t/\sqrt{n})}-1)$, we can now conclude as in \cite[Page
106]{ibragimov_linnik} to get
  \begin{equation}
  \int_0^x t^2 |\gamma_0(t)|=O(x^{3+\delta})+O(x^{q+1}).
  \end{equation}
By \eqref{eq_IL}, this proves that $E(f^2
1_{|f|>x})=O(x^{-\tilde \delta})$ for $\tilde
\delta=\min(\delta, q-2)$. If $q-2<\delta$ (in particular,
$q\not=3$, so $q=p+\epsilon$), we have $\tilde\delta=q-2$,
hence $f$ belongs to $L^{q'}$ for any $q'<q$. In particular,
$f\in L^{q-\epsilon/2}=L^{p+\epsilon/2}$. This is not
compatible with the choice of $p$. Hence, $q-2\geq \delta$,
whence $\tilde\delta=\delta$, and $E(f^2
1_{|f|>x})=O(x^{-\delta})$.
\end{proof}

\begin{proof}[Proof of the sufficiency in Theorem \ref{thm_be}]
Assuming $E( f^2 1_{|f|> x})=O(x^{-\delta})$, we will prove
$\Delta_n=O(n^{-\delta/2})$. We essentially follow the
arguments of the proof of the necessity, in the reverse
direction, the main difference being that we do not need any
more to work with the symmetrization of the random variables.

Let us write $E(e^{\ic tf})=e^{-\sigma_0^2 t^2/2 + t^2
\gamma(t)}$. \cite[Page 111]{ibragimov_linnik} proves that,
under the assumption $E( f^2 1_{|f|> x})=O(x^{-\delta})$, the
function $\gamma$ satisfies $\int_0^x t^2 |\gamma(t)|\dd
t=O(x^{3+\delta})$. Moreover, $f$ belongs to $L^p$ for any
$p<2+\delta$. Let $q=\min(2+\delta+\epsilon/2, 3)>2+\delta$.
Taking $p=2+\delta-\epsilon/2$, Theorem \ref{thm_highdiff}
shows that $\lambda(t)=E(e^{\ic tf})+ct^2+O(|t|^q)$, which we
may rewrite as $\lambda(t)=E(e^{\ic tf}) e^{ct^2+ t^2 \phi(t)}$
where $\phi(t)=O(|t|^{q-2})$. Together with the expansion of
$E(e^{\ic tf})$, we obtain
  \begin{equation}
  \label{A10}
  \lambda(t)=e^{-\sigma^2 t^2/2 + t^2 \psi(t)} \text{ with }\int_0^x t^2 |\psi(t)|\dd t=O(x^{3+\delta}).
  \end{equation}

Let $f_n$ denote the characteristic function of $S_n
f/\sqrt{n}$. The classical Berry-Esseen estimate \cite[Theorem
1.5.2]{ibragimov_linnik} shows that, for any $T>0$,
  \begin{equation}
  \Delta_n\leq C \int_{-T}^T \frac{1}{|t|} |f_n(t)-e^{-\sigma^2 t^2/2}| \dd t+ C/T.
  \end{equation}
Let us choose $T=\rho \sqrt{n}$ with $\rho$ small enough. The
second term in this estimate is then
$O(n^{-1/2})=O(n^{-\delta})$. For the first term, we split the
integral in two parts, corresponding to $|t|\leq 1/n$ and
$|t|>1/n$. In the first part, we have
  \begin{equation}
  |f_n(t)-1|=\left|E(e^{\ic t S_n f/\sqrt{n}} -1)\right|\leq |t| E|S_n f|/\sqrt{n} \leq \sqrt{n} |t|.
  \end{equation}
The resulting integral is bounded by
  \begin{equation}
  \int_{|t|\leq 1/n} \sqrt{n} + |t|^{-1} |1-e^{-\sigma^2 t^2/2}| \dd t=O(n^{-1/2}).
  \end{equation}
Hence,
  \begin{equation}
  \Delta_n\leq C \int_{1/n\leq |t|\leq \rho\sqrt{n}} \frac{1}{|t|} |f_n(t)-e^{-\sigma^2 t^2/2}| \dd t+ O(n^{-1/2}).
  \end{equation}
We have
$f_n(t)=\lambda(t/\sqrt{n})^n\mu(t/\sqrt{n})+\epsilon_n(t)$,
where $\epsilon_n(t)$ tends exponentially fast to $0$, while
$\mu(t)=1+O(t)$. Let $g_n(t)=\lambda(t/\sqrt{n})^n$. By
\eqref{A10}, if $\rho$ is small enough, we have
$|\lambda(t)|\leq e^{-\sigma^2 t^2/4}$ for $|t|\leq \rho$. This
yields $|\lambda(t/\sqrt{n})|^n \leq e^{-\sigma^2 t^2/4}$ for
$|t|\leq \rho\sqrt{n}$. Hence,
  \begin{equation}
  \int_{1/n\leq |t|\leq \rho\sqrt{n}} \frac{1}{|t|} |f_n(t)-g_n(t)|\leq C/\sqrt{n}.
  \end{equation}
With \eqref{A10}, we obtain
  \begin{align*}
  \Delta_n&\leq C \int_{1/n\leq |t|\leq \rho\sqrt{n}} \frac{1}{|t|} |g_n(t)-e^{-\sigma^2 t^2/2}| \dd t+ O(n^{-1/2})
  \\ &= C \int_{1/n\leq |t|\leq \rho\sqrt{n}} \frac{1}{|t|} e^{-\sigma^2 t^2/2} | e^{t^2 \psi(t/\sqrt{n})} -1| \dd t
  + O(n^{-1/2}).
  \end{align*}
Since $\int_0^x t^2 |\psi(t)|\dd t=O(x^{3+\delta})$, this last
integral is bounded by $O(n^{-\delta/2})$ (see
e.g.~\cite[bottom of Page 107]{ibragimov_linnik}). This
concludes the proof.
\end{proof}

\bibliography{biblio}

\providecommand{\bysame}{\leavevmode\hbox to3em{\hrulefill}\thinspace}
\providecommand{\MR}{\relax\ifhmode\unskip\space\fi MR }
\providecommand{\MRhref}[2]{%
  \href{http://www.ams.org/mathscinet-getitem?mr=#1}{#2}
}
\providecommand{\href}[2]{#2}
\begin{thebibliography}{BGT87}

\bibitem[AD98]{aaronson_denker_independant}
Jon Aaronson and Manfred Denker, \emph{Characteristic functions of random
  variables attracted to {$1$}-stable laws}, Ann. Probab. \textbf{26} (1998),
  no.~1, 399--415. \MR{MR1617056 (99c:60024)}

\bibitem[AD01a]{aaronson_denker_central}
\bysame, \emph{A local limit theorem for stationary processes in the domain of
  attraction of a normal distribution}, Asymptotic methods in probability and
  statistics with applications (St.\ Petersburg, 1998), Stat. Ind. Technol.,
  Birkh\"auser Boston, Boston, MA, 2001, pp.~215--223. \MR{MR1890328}

\bibitem[AD01b]{aaronson_denker}
\bysame, \emph{Local limit theorems for partial sums of stationary sequences
  generated by {G}ibbs-{M}arkov maps}, Stoch. Dyn. \textbf{1} (2001), no.~2,
  193--237. \MR{MR1840194 (2002h:37014)}

\bibitem[AW00]{aaronson_weiss}
Jon Aaronson and Benjamin Weiss, \emph{Remarks on the tightness of cocycles},
  Colloq. Math. \textbf{84/85} (2000), 363--376, Dedicated to the memory of
  Anzelm Iwanik. \MR{MR1784202 (2001i:37009)}

\bibitem[BGT87]{regular_variation}
Nicholas~H. Bingham, Charles~M. Goldie, and Jozef~L. Teugels, \emph{Regular
  variation}, Encyclopedia of Mathematics and its Applications, vol.~27,
  Cambridge University Press, Cambridge, 1987. \MR{MR898871 (88i:26004)}

\bibitem[Bil95]{billingsley:book}
Patrick Billingsley, \emph{Probability and measure}, third ed., Wiley Series in
  Probability and Mathematical Statistics, John Wiley \& Sons Inc., New York,
  1995, A Wiley-Interscience Publication. \MR{MR1324786 (95k:60001)}

\bibitem[BL76]{bergh_lofstrom_interpolation}
J{\"o}ran Bergh and J{\"o}rgen L{\"o}fstr{\"o}m, \emph{Interpolation spaces.
  {A}n introduction}, Springer-Verlag, Berlin, 1976, Grundlehren der
  Mathematischen Wissenschaften, No. 223. \MR{MR0482275 (58 \#2349)}

\bibitem[Cam64]{campanato}
Sergio Campanato, \emph{Propriet\`a di una famiglia di spazi funzionali}, Ann.
  Scuola Norm. Sup. Pisa (3) \textbf{18} (1964), 137--160. \MR{MR0167862 (29
  \#5127)}

\bibitem[DJ89]{cond_necessary2}
Manfred Denker and Adam Jakubowski, \emph{Stable limit distributions for
  strongly mixing sequences}, Statist. Probab. Lett. \textbf{8} (1989), no.~5,
  477--483. \MR{MR1040810 (91e:60063)}

\bibitem[Fel66]{feller_2}
William Feller, \emph{An introduction to probability theory and its
  applications. {V}ol. {II}}, John Wiley \& Sons Inc., New York, 1966.
  \MR{MR0210154 (35 \#1048)}

\bibitem[GH88]{guivarch-hardy}
Yves Guivarc'h and Jean Hardy, \emph{Th\'eor\`emes limites pour une classe de
  cha\^\i nes de {M}arkov et applications aux diff\'eomorphismes d'{A}nosov},
  Ann. Inst. H. Poincar\'e Probab. Statist. \textbf{24} (1988), no.~1, 73--98.
  \MR{MR937957 (89m:60080)}

\bibitem[GL06]{gouezel_liverani}
S{\'e}bastien Gou{\"e}zel and Carlangelo Liverani, \emph{Banach spaces adapted
  to {A}nosov systems}, Ergodic Theory Dynam. Systems \textbf{26} (2006),
  no.~1, 189--217. \MR{MR2201945 (2007h:37037)}

\bibitem[Gou04]{gouezel_stable}
S{\'e}bastien Gou{\"e}zel, \emph{Central limit theorem and stable laws for
  intermittent maps}, Probab. Theory Related Fields \textbf{128} (2004), no.~1,
  82--122. \MR{MR2027296 (2004k:37016)}

\bibitem[Hen93]{hennion}
Hubert Hennion, \emph{Sur un th\'eor\`eme spectral et son application aux
  noyaux lipchitziens}, Proc. Amer. Math. Soc. \textbf{118} (1993), no.~2,
  627--634. \MR{MR1129880 (93g:60141)}

\bibitem[Her05]{herve_kl}
Lo{\"\i}c Herv{\'e}, \emph{Th{\'e}or{\`e}me local pour cha\^\i nes de {M}arkov
  de probabilit{\'e} de transition quasi-compacte. {A}pplications aux cha\^\i
  nes {$V$}-g{\'e}om{\'e}triquement ergodiques et aux mod{\`e}les
  it{\'e}ratifs}, Ann. Inst. H. Poincar{\'e} Probab. Statist. \textbf{41}
  (2005), no.~2, 179--196. \MR{MR2124640 (2005k:60223)}

\bibitem[HP08]{herve_pene}
Lo{\"\i}c Herv{\'e} and Fran{\c{c}}oise P{\`e}ne, \emph{{N}agaev method via
  {K}eller-{L}iverani theorem}, Preprint, 2008.

\bibitem[Ibr66]{ibragimov}
Ildar~A. Ibragimov, \emph{On the accuracy of approximation by the normal
  distribution of distribution functions of sums of independent random
  variables}, Teor. Verojatnost. i Primenen \textbf{11} (1966), 632--655.
  \MR{MR0212853 (35 \#3718)}

\bibitem[IL71]{ibragimov_linnik}
Ildar~A. Ibragimov and Yuri~V. Linnik, \emph{Independent and stationary
  sequences of random variables}, Wolters-Noordhoff Publishing, Groningen,
  1971, With a supplementary chapter by I. A. Ibragimov and V. V. Petrov,
  Translation from the Russian edited by J. F. C. Kingman. \MR{MR0322926 (48
  \#1287)}

\bibitem[ITM50]{ionescu_tulcea_marinescu}
Cassius~T. Ionescu~Tulcea and Gheorghe Marinescu, \emph{Th\'eorie ergodique
  pour des classes d'op\'erations non compl\`etement continues}, Ann. of Math.
  (2) \textbf{52} (1950), 140--147. \MR{MR0037469 (12,266g)}

\bibitem[Jak93]{cond_necessary}
Adam Jakubowski, \emph{Minimal conditions in {$p$}-stable limit theorems},
  Stochastic Process. Appl. \textbf{44} (1993), no.~2, 291--327. \MR{MR1200412
  (94a:60055)}

\bibitem[Kat66]{kato_pe}
Tosio Kato, \emph{Perturbation theory for linear operators}, Die Grundlehren
  der mathematischen Wissenschaften, Band 132, Springer-Verlag New York, Inc.,
  New York, 1966. \MR{MR0203473 (34 \#3324)}

\bibitem[KL99]{keller_liverani}
Gerhard Keller and Carlangelo Liverani, \emph{Stability of the spectrum for
  transfer operators}, Ann. Scuola Norm. Sup. Pisa Cl. Sci. (4) \textbf{28}
  (1999), no.~1, 141--152. \MR{MR1679080 (2000b:47030)}

\bibitem[RE83]{rousseau-egele}
Jacques Rousseau-Egele, \emph{Un th\'eor\`eme de la limite locale pour une
  classe de transformations dilatantes et monotones par morceaux}, Ann. Probab.
  \textbf{11} (1983), no.~3, 772--788. \MR{MR704569 (84m:60032)}

\bibitem[Sar06]{sarig_CPT}
Omri Sarig, \emph{Continuous phase transitions for dynamical systems}, Comm.
  Math. Phys. \textbf{267} (2006), no.~3, 631--667. \MR{MR2249785
  (2007i:37067)}

\end{thebibliography}
\bibliographystyle{amsalpha}

\end{document}